\newtheorem{theorem}{Theorem}[section]
\newtheorem{lemma}[theorem]{Lemma}
\newtheorem{corollary}[theorem]{Corollary}
\theoremstyle{definition}\newtheorem{definition}[theorem]{Definition}
\newtheorem{example}[theorem]{Example}
\newtheorem{conjecture}[theorem]{Conjecture}
\theoremstyle{theorem}
\newtheorem{problem}[theorem]{Problem}
\theoremstyle{definition}
\theoremstyle{definition}
\theoremstyle{definition}\newtheorem{remark}[theorem]{Remark}
\theoremstyle{definition}\newtheorem*{acknowledgments}{Acknowledgments}
\newtheorem{assumption}[theorem]{Assumption}
\newcommand{\al}{\alpha}
\newcommand{\be}{\beta}
\newcommand{\Del}{\Delta}
\newcommand{\lam}{\lambda}
\newcommand{\eps}{\epsilon}
\newcommand{\sig}{\sigma}
\newcommand{\om}{\omega}
\newcommand{\Om}{\Omega}
\newcommand{\vphi}{\varphi}
\newcommand{\cF}{\mathcal{F}}
\newcommand{\cO}{\mathcal{O}}
\newcommand{\bC}{\mathbb{C}}
\newcommand{\bP}{\mathbb{P}}
\newcommand{\bR}{\mathbb{R}}
\newcommand{\bZ}{\mathbb{Z}}
\newcommand{\bQ}{\mathbb{Q}}
\newcommand{\bK}{\mathbb{K}}
\newcommand{\bN}{\mathbb{N}}
\newcommand{\bH}{\mathbb{H}}
\newcommand{\SL}{\operatorname{SL}}
\newcommand{\SO}{\operatorname{SO}}
\newcommand{\defi}{\overset{\on{def}}{=}}
\newcommand\norm[1]{||#1||}
\newcommand\set[1]{\left\{#1\right\}}
\newcommand\pa[1]{\left(#1\right)}
\newcommand\av[1]{|#1|}
\newcommand\on[1]{\operatorname{#1}}
\newcommand\diag[1]{\operatorname{diag}\left(#1\right)}
\newcommand\mb[1]{\mathbf{#1}}
\newcommand\smallmat[1]{\pa{\begin{smallmatrix}#1\end{smallmatrix}}}
\newcommand{\limfi}[2]{{\displaystyle \lim_{#1\to#2}}}
\newcommand{\onto}{\xymatrix{\ar@{>>}[r]&}}
\newcommand{\odadd}[1]{{\color{blue}{\tiny [OD]} #1}}
\begin{document}
\title{Shapes of unit lattices and escape of mass}
\author[Ofir David]{Ofir David}
\author[Uri Shapira]{Uri Shapira}

\begin{abstract}
We study the collection of points on the modular surface obtained from the logarithm embeddings of the 
groups of units in totally real cubic number fields. We conjecture that this set is dense and  
show that its closure contains countably many explicit curves and give a 
strategy to show that it has non-empty interior. The results are obtained by constructing explicit families of orders (generalizing the 
so called ``simplest cubic fields") and calculating their groups of units. We also address the question of escape of mass for the compact 
orbits of the diagonal group associated to these orders. 
\end{abstract}
\address{Department of Mathematics\\
Technion \\
Haifa \\
Israel }
\email{ofirdav@tx.technion.ac.il}

\address{Department of Mathematics\\
Technion \\
Haifa \\
Israel }
\email{ushapira@tx.technion.ac.il}
%
%
\maketitle

\section{Introduction}
This paper originates from an attempt to understand concrete examples of sequences of compact
orbits for the diagonal group $A<\SL_3(\bR)$ on the space of lattices $X \defi \SL_3(\bR)/\SL_3(\bZ)$.
We investigate two seemingly unrelated questions one can ask about such orbits.
\subsection{Shapes of unit lattices}
We begin by explaining
the first question which we find most interesting and wish to promote its study.
It is well known (see \S\ref{Preliminaries}) that given an order $\cO$ in a totally real cubic number field, one can construct out of it
a lattice with a compact $A$-orbit whose geometric shape is governed by the shape of the group of units 
$\cO^\times$.

More precisely, 
let $\set{\sig_i}_1^3$ denote the embeddings of the field into the reals. 
Dirichlet's unit theorem states that if we denote for
$\om\in \cO^\times$,
$\psi(\om)\defi (\log\av{\sig_1(\om)},\log\av{\sig_2(\om)},\log\av{\sig_3(\om)})$, then $\psi$ maps $\cO^\times$ to a
lattice in the plane
$\bR^3_0\defi\set{\mb{t}\in\bR^3:\sum_1^3t_i=0}$. In turn, we may define the \textit{shape} $\Del_{\cO^\times}$
of the unit lattice $\cO^\times$
to be the corresponding point on the modular curve $\SL_2(\bZ)\backslash \bH$. This correspondence is defined as follows: one
chooses a similarity map to identify $\bR^3_0$ and $\bR^2$ which maps $\psi(\cO^\times)$ to a unimodular lattice in $\bR^2$; i.e.\
to a point in $\SL_2(\bR)/\SL_2(\bZ)$. Since the similarity is only well defined up to rotation, we obtain a well defined point in
$\SO_2(\bR)\backslash\SL_2(\bR)/\SL_2(\bZ)\simeq \SL_2(\bZ)\backslash \bH$. We set
$$\Om\defi \set{\Del_{\cO^\times}\in \SL_2(\bZ)\backslash \bH:\cO\textrm{ is an order in a totally real cubic number field}}.$$
We wish to promote the following conjectures.
\begin{conjecture}\label{conj:main}
\begin{enumerate}
\item\label{conj:nc} The closure $\overline{\Om}$ in the modular surface is non-compact.
\item\label{conj:int} The closure $\overline{\Om}$ in the modular surface has non-empty interior.
\item\label{conj:dense} The set  $\Om$ is dense in the modular surface.
\end{enumerate}
\end{conjecture}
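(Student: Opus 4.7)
The plan is to construct explicit infinite families of orders $\{\cO_{\mathbf{a}}\}$ in totally real cubic fields, indexed by one or more integer parameters $\mathbf{a}$, compute the logarithmic embeddings $\psi$ of their fundamental units in closed form, and track the induced shapes $\Del_{\cO_{\mathbf{a}}^\times}$ as $\mathbf{a}$ varies. The prototype is the Shanks family of simplest cubic fields defined by $f_a(x) = x^3 - ax^2 - (a+3)x - 1$, whose roots $\theta_a^{(i)}$ are cyclically permuted by an explicit M\"obius transformation and yield two fundamental units of $\cO_a = \bZ[\theta_a]$ whose $\psi$-images admit asymptotic expansions in $a$.

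For part (\ref{conj:nc}), I would analyze the asymptotics of $\Del_{\cO_a^\times}$ as $a \to \infty$. Since one root of $f_a$ grows like $a$ while the others remain bounded, the vectors $\psi(\theta_a)$ and $\psi(\theta_a')$ become extremely asymmetric in size. After the similarity normalization to $\bR^2$, the ratio of successive minima of the unit lattice blows up, forcing $\Del_{\cO_a^\times}$ to escape to the cusp of the modular surface and establishing non-compactness of $\overline{\Om}$. For part (\ref{conj:int}), I would upgrade the one-parameter family to a genuinely two-parameter family, either by considering orders of varying conductor inside Shanks fields, or by constructing polynomials $f_{\mathbf{a}}(x)$ that depend on two integer parameters while remaining irreducible and totally real with small Galois group. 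Each choice yields a point in the modular surface, and the goal is to show that the differential of the shape map $\mathbf{a} \mapsto \Del_{\cO_{\mathbf{a}}^\times}$ has rank $2$ at some limit point; an Arzel\`a--Ascoli argument applied to the parametric curves obtained by fixing one parameter and varying the other would then produce a subset of positive $2$-dimensional measure inside $\overline{\Om}$.

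The main obstacle is part (\ref{conj:dense}). A curve-by-curve argument cannot suffice on its own: one must show that shapes coming from units of cubic orders approximate \emph{every} point in $\SL_2(\bZ)\backslash\bH$. A plausible route is to combine the explicit families with an equidistribution input, perhaps in the spirit of Bhargava--Harron style results on shapes of rings of integers in cubic fields, or with adelic arguments for compact $A$-orbits. An alternative is to attempt to prescribe the shape directly: given a target lattice, solve a Diophantine problem to produce a totally real irreducible cubic polynomial whose fundamental units have logarithmic embeddings matching the target up to prescribed error. Both routes are substantial, and bridging the gap between ``countably many explicit curves" in $\overline{\Om}$ and full density appears to require genuinely new input beyond the parametric constructions outlined above.
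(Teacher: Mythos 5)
The statement you are addressing is a \emph{conjecture}, not a theorem; the paper does not claim to prove any of its three parts. What the paper does prove is Theorem~\ref{thm:curves}, showing that $\overline{\Om}$ contains countably many explicit curves, and it reduces part~(\ref{conj:int}) to Problem~\ref{prob:cong} about mutually cubic root sequences. So there is no ``paper's own proof'' to match your proposal against, and your concluding acknowledgment that density requires genuinely new input is the correct assessment.

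That said, your proposed argument for part~(\ref{conj:nc}) has a concrete error. You claim that because one root of the Shanks polynomial $f_a(x)=x^3-ax^2-(a+3)x-1$ grows like $a$ while the others stay bounded, the logarithmic embeddings of the fundamental units become ``extremely asymmetric,'' so the shape escapes to the cusp. This is false: the asymmetry in the \emph{sizes} of the roots does not produce asymmetry in the \emph{shape} of the unit lattice. With roots asymptotically $a$, $-1/a$, $-1$ one finds $\psi(\theta_a)\sim\log a\cdot(1,-1,0)$ and $\psi(\theta_a+1)\sim\log a\cdot(1,0,-1)$; these two vectors have equal length and meet at $60^\circ$, so the unit lattice converges to the \emph{regular triangle} (hexagonal) lattice, i.e.\ to the point $\om=e^{2\pi i/3}$ at the corner of the fundamental domain, which is as far from the cusp as you can get while staying on the boundary. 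Indeed this is precisely the phenomenon the paper emphasizes in \S1.4 and Theorem~\ref{thm:fixed_parameters}: every family with fixed $(a,b,c,d)$, including the simplest cubic fields, yields shapes converging to $\om$, which is why the paper is forced to let the unit coefficients $a_t,b_t$ grow with $t$ in order to reach any other limit shape. Part~(\ref{conj:nc}) therefore cannot be attacked by the Shanks family, and in fact the paper's constructions all produce limit shapes confined to a bounded region of the fundamental domain (the curves in Figure~\ref{fig:Shapes-of-units}), so non-compactness of $\overline{\Om}$ remains wide open.

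Your strategy for part~(\ref{conj:int})---upgrading to a two-parameter family and showing the shape map has rank $2$---is conceptually aligned with what the paper does: the second parameter is effectively the ratio $\tilde a/\tilde b$ controlling which curve of Theorem~\ref{thm:curves} one lands on, and the obstruction to filling a two-dimensional region is exactly Problem~\ref{prob:cong} (do enough ratios occur?). So your plan is in the right direction, but the hard arithmetic step---producing mutually cubic root sequences with a dense or positive-measure set of ratios---is neither addressed by your sketch nor resolved by the paper.
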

Despite the fact that the above conjectures are natural, as far as we know there is virtually nothing in the literature about them.
In personal communication with Andre Reznikov we learned that questions which are similar in spirit to the above 
were also suggested by Margulis and Gromov
and that numerical experiments seem to support Conjecure~\ref{conj:main}. 
We provide modest progress towards Conjecture~\ref{conj:main}(\ref{conj:int}) and prove that
$\overline{\Om}$ contains countably many  explicit curves illustrated in Figure~\ref{fig:Shapes-of-units}. 
For more details see Theorem~\ref{thm:curves}.
\begin{figure}[h]
\begin{centering}
\includegraphics[scale=0.75]{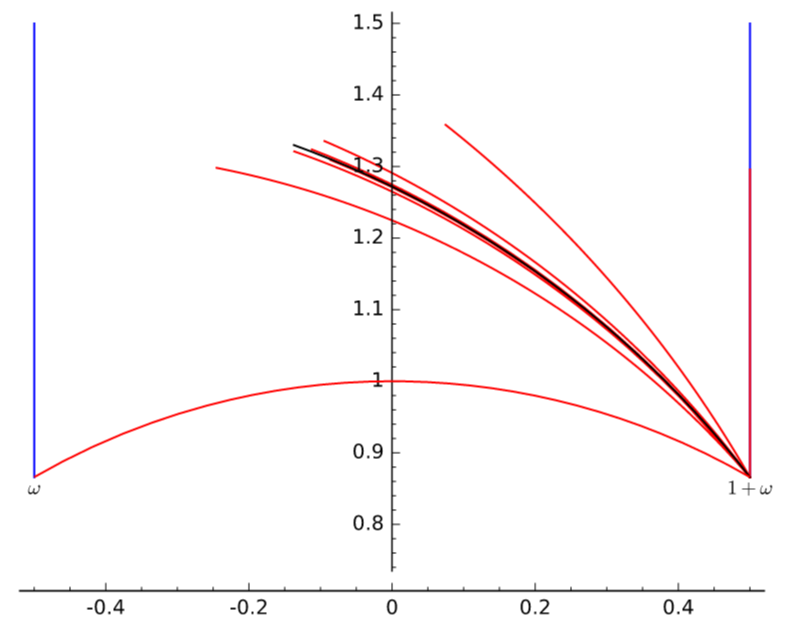}
\par\end{centering}

\caption{Appearing in red are continuous curves (with accumulations) of cluster points of shapes of unit lattices of
rings of the form $\protect\bZ\left[\theta\right]$
where $\theta$ is a unit, which we construct. The black curve is (one of the countably many) limits of the red curves.}\label{fig:Shapes-of-units}
\end{figure}

In Problem~\ref{prob:cong} we give a strategy of how to reduce Conjecture~\ref{conj:main}(\ref{conj:int}) to a certain problem in finding enough solutions to some congruence conditions.

\subsection{Escape of mass}
We now describe the second question we study. One of the major open questions in homogeneous dynamics is to understand the space of $A$-invariant
and ergodic probability measures on $X$. Conjecturally, this space is composed of periodic measures only. Here, a probability
measure is called \textit{periodic} if it is
$L$-invariant and supported on a single orbit $Lx\subset X$, where $L<\SL_3(\bR)$ is a closed subgroup. In such a case we denote this
measure by $\mu_{Lx}$ and say that the orbit $Lx$ is a \textit{periodic orbit}.
In fact, in the above example, due to scarcity of closed groups $A<L<\SL_3(\bR)$, the $A$-invariant and ergodic periodic measures in this space are $\mu_{X}$ -- the
unique $\SL_3(\bR)$-invariant probability measure on $X$ -- and the ones corresponding to periodic
$A$-orbits (which is a synonym for compact $A$-orbits). Apart from describing what are the $A$-invariant and ergodic probability measures on $X$, it is desirable to understand
the topology of this space. In particular, what can be said about the weak* accumulation points of sequences $\mu_{Ax_n}$
of periodic $A$-invariant measures supported on compact $A$-orbits. The question that we study for a sequence $\mu_{Ax_n}$
is that of \textit{partial} or \textit{full escape of mass}. We say that a sequence $\mu_{Ax_n}$ exhibits $c$-\textit{escape of mass} for  $0<c\le 1$ if any weak$^*$ accumulation point $\mu$ of $\mu_{Ax_n}$ satisfies $\mu(X)\le1-c$. We say that it exhibits \textit{full escape of mass} if it exhibits $1$-escape of mass, i.e.\ if $\mu_{Ax_n}$ converges
to the zero measure. 

The reason for the number theoretic interest in periodic $A$-orbits is that they correspond to full modules in totally real
cubic number fields as will be discussed later.
Our aim in this direction is to review and construct particular examples of such full modules and establish partial or full escape of mass of the corresponding
orbits. In practice, what we do is exhibit a family of cubic polynomials $\set{f_i(x):i\in I}\subset \bZ[x]$ which depend on some parameter $i\in I$ 
(such that $f_i(x)$ is irreducible and totally real), and
discuss the periodic $A$-orbit corresponding to the order $\bZ[\theta_i]$, where $\theta_i$ is a root of $f_i(x)$, as the parameter varies.
One might expect that
if the polynomials are chosen carefully, then conclusions regarding the orbits could be derived.

\subsection{Structure of the paper and results} 
In \S\ref{Preliminaries}
we give the general notation and correspondence between full modules
of general orders in number fields and periodic $A$-orbits in $X$.
In particular, we will give a condition on
the relation between the discriminant and the unit group that will
be sufficient to produce escape of mass.

As stated before, we are interested in lattices arising from rings of the form $\bZ[\theta]$ where $\theta$ is the root of some monic irreducible polynomial $f(x)$. In particular, we will be interested in the case where the units of $\bZ[\theta]$ are generated by elements of the form $a\theta-b,c\theta-d$ (and $-1$).

We start this investigation in \S\ref{Construction-of-cubic}, and show in Lemma~\ref{lem:unit_condition} that a necessary condition for $a\theta-b, c\theta-d$ to be units in $\bZ[\theta]$ is that $a^3 f(\frac{b}{a})=\pm 1$ and similarly $c^3 f(\frac{d}{c})=\pm 1$, which is a solution to two integral equations in the coefficients of $f$.

In  \S\ref{zero_condition}, \S\ref{nonzero_condition}  we will show how to construct monic cubic polynomials 
$f_{a,b,c,d,t}\left(x\right)$ (all parameters being integers) which satisfy these conditions. Moreover, we will show that there are infinitely  many such polynomials (parametrized by $t$) whenever $a,b,c,d$ satisfy
a simple congruence conditions, and we will give some examples for such $a,b,c,d$. 

In \S\ref{Full-escape-of}, we fix the parameters $a,b,c,d$ and
take $|t|\to\infty$. In this case, for $|t|$ big enough, the polynomials $f_{a,b,c,d,t}(x)$ will be irreducible and not only will the elements $a\theta-b,c\theta-d$ be integral units, they will actually form a set of fundamental units (i.e. they generate the unit group
together with $-1$). More precisely, we have the following result which is a direct consequence of Theorem~\ref{approx_roots} and Theorem~\ref{thm:fixed_parameters}.

\begin{theorem}\label{thm:full_escape_ex}
Fix $a,b,c,d\in\bZ$ such that $\frac{b}{a}\neq\frac{d}{c}, \;a\neq \pm c$ and there
exists a monic polynomial $h(x)\in \bZ[x]$ satisfying $a^{3}h\left(\frac{b}{a}\right)=\epsilon_{1},\;c^{3}h\left(\frac{d}{c}\right)=\epsilon_{2}$, where $\eps_i=\pm1$.
We denote $h_{t}(x)=h(x)+tg(x),$ $g(x)=\left(ax-b\right)\left(cx-d\right)$
where $t\in\bZ$. Then the following holds
\begin{enumerate}
\item For all $\left|t\right|$ big enough the polynomial $h_{t}\left(x\right)$
is totally real and irreducible.
\item Setting $\theta_t$ to be a root for $h_t$, for all $|t|$ big enough the unit group of $\bZ[\theta_t]$ is generated by $\{a\theta_t-b, c\theta_t-d,-1\}$.
\item As $|t|\to \infty$ the shape of the unit lattice converges to the regular triangles lattice.
\item As $|t| \to \infty$, the orbits corresponding to the orders $\bZ[\theta_t]$ exhibit full escape of mass.
\end{enumerate}
\end{theorem}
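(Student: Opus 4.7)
The plan is to obtain precise asymptotics for the roots of $h_t(x)=h(x)+tg(x)$, where $g(x)=(ax-b)(cx-d)$, as $|t|\to\infty$, then read off the unit lattice from them, and finally feed the resulting shape into the escape-of-mass criterion set up in Section~\ref{Preliminaries}. Writing $h_t(x)/t=g(x)+h(x)/t$, a perturbation argument (essentially the content of Theorem~\ref{approx_roots}) should produce three simple real roots
\begin{align*}
\theta_t^{(1)}=\tfrac{b}{a}+O(1/t),\qquad \theta_t^{(2)}=\tfrac{d}{c}+O(1/t),\qquad \theta_t^{(3)}=-act+O(1).
\end{align*}
Totally real is immediate. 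For irreducibility, a rational root of a monic integer cubic must be an integer $n$, and $h_t(n)=0$ forces $t=-h(n)/g(n)$; polynomial division of $h$ by $g$ produces a remainder of degree at most one, which is nonzero since $h(b/a),h(d/c)$ are nonzero, so $g(n)\mid h(n)$ holds for only finitely many integers $n$, restricting the ``bad'' values of $t$ to a finite set and yielding~(1).

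For~(2), Lemma~\ref{lem:unit_condition} applied to $h_t$ gives immediately that $\alpha_t:=a\theta_t-b$ and $\beta_t:=c\theta_t-d$ are units, because $a^3h_t(b/a)=a^3h(b/a)=\epsilon_1=\pm1$ and analogously $c^3h_t(d/c)=\pm1$. The root asymptotics then determine the three Galois conjugates of $\alpha_t$ and $\beta_t$: two conjugates of each tend to nonzero constants (namely $\pm(ad-bc)/c$ and $\pm(bc-ad)/a$, nonzero since $b/a\neq d/c$), one grows linearly in $t$, and the remaining conjugate is forced down to $O(1/t)$ by the norm-one relation. After labelling the embeddings appropriately,
\begin{align*}
\psi(\alpha_t)=(-\log|t|,\,0,\,\log|t|)+O(1),\qquad \psi(\beta_t)=(0,\,-\log|t|,\,\log|t|)+O(1).
\end{align*}
In the plane $\bR^3_0$ both vectors have length $\sqrt{2}\log|t|+O(1)$ and included angle tending to $\pi/3$, which settles~(3) modulo the fundamentality of the candidate units.

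For fundamentality, let $\Lambda_t:=\langle-1,\alpha_t,\beta_t\rangle$; by the shape computation its regulator is $R(\Lambda_t)\sim\sqrt{3}(\log|t|)^2$, while $D_t:=\operatorname{disc}(\bZ[\theta_t])$ is a polynomial in $t$ of degree~$4$, so $\log D_t\sim4\log|t|$. An overgroup of $\Lambda_t$ of index $k\geq2$ would have regulator $\leq\tfrac{\sqrt{3}}{2}(\log|t|)^2\sim\tfrac{\sqrt{3}}{32}(\log D_t)^2$, which I would contradict using a known lower bound of the form $R(\bZ[\theta_t]^\times)\geq C(\log D_t)^2$ for totally real cubic orders, provided the constants are compared carefully. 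Item~(4) then follows from the criterion in Section~\ref{Preliminaries}: with $D_t\to\infty$ and the unit lattice shape tending to the hexagonal lattice, the proportion of the $A$-orbit lying in any fixed compact subset of $X$ is controlled by the ratio of a fixed hexagonal window (coming from short vectors generated by $1\in\bZ[\theta_t]$ after normalization by $D_t^{-1/6}$) to the full fundamental domain of $\psi(\bZ[\theta_t]^\times)$ in $\bR^3_0$, and this ratio vanishes in the limit; the equilateral limit is exactly the extremal case that makes the escape \emph{full} rather than partial, precisely as in the Cassels family treated in \cite{Shapira}.

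I expect the main obstacle to be the fundamentality step inside~(2): extracting a regulator lower bound for totally real cubic orders sharp enough to beat the explicit constant $\sqrt{3}/32$ coming from the hexagonal shape. The remaining steps reduce to the perturbation argument behind Theorem~\ref{approx_roots}, the direct asymptotic computation of the conjugates, and a black-box application of the Section~\ref{Preliminaries} criterion.
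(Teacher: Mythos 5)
Your proposal follows essentially the same route as the paper, which derives this theorem from Theorem~\ref{approx_roots} (perturbation of roots) and Theorem~\ref{thm:fixed_parameters} (fundamentality via Cusick's bound, shape, and $(R,1)$-tightness). The one genuinely different detail is your irreducibility argument: you note that a monic integer cubic can only have integer rational roots, and $h_t(n)=0$ forces $g(n)\mid h(n)$, which (after division with remainder $r$, $\deg r\le 1$, $r\ne 0$) happens for only finitely many $n$; the paper instead shows directly from the root asymptotics that none of $\theta_1,\theta_2,\theta_3$ is an integer for $|t|$ large. Your version is arguably cleaner, though you should double-check the degenerate case $g(n)=0$ (it cannot occur with $h_t(n)=0$, since then $h_t(n)=h(n)=\pm 1$). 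Two small slips that do not affect the conclusion: ``two conjugates of each tend to nonzero constants'' should be ``one conjugate'' (one is $\Theta(1/t)$, one is $\Theta(1)$, one is $\Theta(t)$); and your $R(\Lambda_t)\sim\sqrt{3}(\log|t|)^2$ is the covolume in $\bR^3_0$, whereas the regulator appearing in Cusick's bound (Theorem~\ref{thm:cusik}, Corollary~\ref{fundamental_units}) is the $2\times 2$ minor, $\sim(\log|t|)^2$; fortunately $\tfrac{1}{32}<\tfrac{1}{16}$ and $\tfrac{\sqrt{3}}{32}<\tfrac{\sqrt{3}}{16}$ both hold, so the contradiction survives either normalization, but you should pick one convention and stick to it. For item (4), the quantitative content you wave at is precisely the paper's $(R,r)$-tightness machinery (Definition~\ref{def:tight} and Theorem~\ref{thm:partial escape}), which is what turns ``the hexagonal limit is extremal'' into full escape of mass.
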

We remark two things: 
(i) What stands behind Theorem~\ref{thm:full_escape_ex} is that the family of polynomials $h_t$ is controled by a degree 2 polynomisl $g$, and as $|t|$ increases, we can approximate the roots of $h_t$ using the roots of $g$.
(ii) As mentioned above, the existence of a polynomial $h$ used to jumpstart Theorem~\ref{thm:full_escape_ex} is guaranteed by a simple congruence 
condition on the parameters $a,b,c,d$.  

The phenomena described in Theorem~\ref{thm:full_escape_ex} are similar in nature to what happens in~\cite{Shapira} and in particular, the shape 
of the regular triangle lattice is the only possible limit shape. It turns out that in order to create new limit shapes one needs to vary the parameters 
$a,b,c,d$ with $t$.
This approach is implemented in \S\ref{one_unit}. In fact, to simplify matters
we concentrate on the case where $c=1,d=0$ 
(i.e.\ that $\theta$ is an integral unit), and take $a:=a_t,b:=b_t$ to increase
to infinity as $|t|\to\infty$. As mentioned earlier, in order to construct the relevant polynomial for such $a_t,b_t$ they need to satisfy a simple congruence condition
which we now define.
\begin{definition}
We say that a pair of integers $(a,b)$ is a \textit{mutually cubic root pair} if
$a^{3}\equiv_{b} 1$ and $b^{3}\equiv_{a} 1$. 
A sequence $(a_t,b_t)$ is called a \textit{mutually cubic root sequence} if $(a_t,b_t)$ is a mutually cubic root pair for any $t\in \bN$ or $\bZ$.
\end{definition}
Given a mutually cubic root sequence, we are able to construct a family of orbits which exhibit (at least) partial escape of mass. Furthermore, we will also compute the shapes of the unit lattices and their limit
as $|t| \to \infty$. Unlike the case with $a,b,c,d$ fixed, here the limit shapes will not necessarily be the regular triangles lattice.
\begin{theorem}\label{thm:curves}
Let $(a_t,b_t)$ be a mutually cubic root sequence and suppose that the limits $\tilde{a}=\limfi t{\infty}\frac{\log\left|a_t\right|}{\log\left|t\right|}$ and $\tilde{b}=\limfi t{\infty}\frac{\log\left|b_t\right|}{\log\left|t\right|}$ exist and satisfy $0\leq \tilde{a}\leq \tilde{b}$. Then $\overline{\Omega}\subseteq \SL_2(\bZ) \backslash \mathbb{H}$ contains the image of the curve 
$$\gamma(r)=\frac{1+2r\tilde{a}+\left(1+r\tilde{b}+2r\tilde{a}\right)\omega}
{1+r\tilde{a}+\left(r\tilde{a}-r\tilde{b}\right)\omega}, \; \; r\in \left[0,\min(\frac{1}{3\tilde{a}},\frac{1}{\tilde{b}})\right],$$
where $\om= e^{\frac{2\pi i}{3}}$.
\end{theorem}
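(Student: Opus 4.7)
The plan is to realise each point $\gamma(r)$ on the curve as a limit of shapes coming from a two-parameter family of orders built from the sequence $(a_t,b_t)$, the parameter $r$ encoding the relative scales of the two parameters in the family.

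Given the sequence, the congruences $a_t^3\equiv 1\pmod{b_t}$ and $b_t^3\equiv 1\pmod{a_t}$ allow one to construct for each $t$, exactly as in \S\ref{nonzero_condition} (with $c=1,d=0$), a monic cubic $h_t^{(0)}(x)=x^3+p_t x^2+q_t x+\varepsilon$ with $\varepsilon=\pm 1$ and $a_t^3 h_t^{(0)}(b_t/a_t)=\pm 1$; Bezout freedom in the integer defining equation allows the choice $|p_t|\lesssim|t|^{\tilde b-\tilde a}$ and $|q_t|\lesssim 1$. Setting $g_t(x):=x(a_tx-b_t)$, whose rational roots are $0$ and $b_t/a_t$, the one-parameter family
\[
h_t^{(k)}(x):=h_t^{(0)}(x)+k\,g_t(x),\qquad k\in\bZ,
\]
inherits both defining conditions, so any root $\theta_{t,k}$ gives an order $\bZ[\theta_{t,k}]$ in which $\theta_{t,k}$ and $u_{t,k}:=a_t\theta_{t,k}-b_t$ are units. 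For $|k|$ sufficiently large, a perturbative analysis in the spirit of Theorem~\ref{approx_roots} (treating $kg_t$ as the dominant part of $h_t^{(k)}$) shows that $h_t^{(k)}$ is irreducible and totally real, with three real roots satisfying
\[
\theta_1\sim-ka_t,\quad \theta_2\sim b_t/a_t,\quad \theta_3\sim\varepsilon/(kb_t).
\]
Writing $s:=\log|k|/\log|t|$ and using $\log|a_t|\sim\tilde a\log|t|$, $\log|b_t|\sim\tilde b\log|t|$, the log-embeddings of these units then become, to leading order,
\[
\psi(\theta_{t,k})\sim(\log|t|)\bigl(s+\tilde a,\ \tilde b-\tilde a,\ -(s+\tilde b)\bigr),
\]
\[
\psi(u_{t,k})\sim(\log|t|)\bigl(s+2\tilde a,\ -(s+2\tilde a+\tilde b),\ \tilde b\bigr).
\]

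Identifying $\bR^3_0\cong\bC$ by $(x_1,x_2,x_3)\mapsto x_1+x_2\om+x_3\om^2$ (a similarity of ratio $\sqrt{3/2}$), the shape of the lattice spanned by these two vectors is represented in $\SL_2(\bZ)\backslash\bH$ by the complex ratio of their images, after choosing the orientation so as to land in the upper half-plane. A direct computation using $\om^2=-1-\om$ and the substitution $s=1/r$ identifies this ratio with $\gamma(r)$. Thus for each $r\in[0,\min(1/(3\tilde a),1/\tilde b)]$ I would take a sequence of integers $k_t$ with $|k_t|\sim|t|^{1/r}$ (where $r=0$ corresponds to $|k_t|$ growing super-polynomially in $|t|$, which recovers the regular triangle lattice limit of Theorem~\ref{thm:full_escape_ex}); the shapes of the resulting orders $\bZ[\theta_{t,k_t}]$ converge to $\gamma(r)$, so $\gamma(r)\in\overline{\Om}$, and letting $r$ vary over the allowed interval traces out the whole curve.

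The main obstacle is to show that $\{-1,\theta_{t,k_t},u_{t,k_t}\}$ generates the full unit group of $\bZ[\theta_{t,k_t}]$, rather than a proper finite-index subgroup whose shape could differ. The computed regulator of this subgroup is of order $(\log|t|)^2\bigl(s^2+2s(\tilde a+\tilde b)+3\tilde a\tilde b\bigr)$, while $\log|\operatorname{disc} h_t^{(k)}|\sim(4s+2\tilde a+2\tilde b)\log|t|$; a Cusick-type lower bound on the regulator of a totally real cubic field is expected to force the subgroup to coincide with the full unit group precisely when $s\ge\max(3\tilde a,\tilde b)$, equivalently $r\le\min(1/(3\tilde a),1/\tilde b)$, which is exactly the upper bound imposed in the theorem. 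Subsidiary points (irreducibility and total reality of $h_t^{(k)}$ for large $|k|$, uniform validity of the root asymptotics in $t$, and matching orientation conventions) should follow from routine perturbative estimates.
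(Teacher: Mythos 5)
Your proposal follows essentially the same route as the paper, with one cosmetic difference in how the two scales are decoupled: you introduce an explicit second parameter $k$ and take $k_t\sim|t|^{1/r}$, whereas the paper achieves the same effect within its one-parameter family $f_{a_t,b_t,t}$ by reindexing, replacing $(a_t,b_t,t)$ with $(a_{t^p},b_{t^p},t^q)$ for $r=p/q$ rational and then passing to all $r$ by continuity of the map $z(\tilde a,\tilde b)$ from Theorem~\ref{thm:compute_shapes}. In both formulations the effective exponents become $r\tilde a$, $r\tilde b$, and one then feeds these into Theorem~\ref{thm:compute_shapes} (your $\psi(\theta),\psi(u)$ asymptotics and the similarity $\bR^3_0\cong\bC$ are the content of that theorem and of Theorem~\ref{thm:tight_mass_escape}).

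There is, however, a misattribution worth flagging. You locate the source of the restriction $r\le\min(1/(3\tilde a),1/\tilde b)$ in the Cusick regulator bound, claiming that Cusick ``forces the subgroup to coincide with the full unit group precisely when $s\ge\max(3\tilde a,\tilde b)$''. That is not where the constraint lives. Plugging your asymptotics for $R'$ and $\log D$ into Corollary~\ref{fundamental_units}, the inequality $R'/\log^2(D/4)<1/8$ reduces to $8s^2+4\tilde a^2+4\tilde b^2-16\tilde a\tilde b>0$, i.e.\ $4(\tilde b-\tilde a)^2+8(s^2-\tilde a\tilde b)>0$, which for $0\le\tilde a\le\tilde b\le s$ holds strictly (as the paper checks for $s=1$); the Cusick criterion is therefore slack throughout the range and is not the binding constraint. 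The real source of the bound is Assumption~\ref{factAB} ($\tilde a<1/3$, $\tilde b<1$), which is needed so that the constant terms of $P_t,Q_t$ (of size roughly $a^5/b^2$ and $a^4/b$) and the root-correction terms are dominated by the $t\cdot g$ part, making the Newton--Raphson estimates of Theorem~\ref{thm:approx_roots} and the asymptotics in the corollary following Lemma~\ref{lem:regdiv}'s neighbourhood valid. After the rescaling this becomes $r\tilde a<1/3$, $r\tilde b<1$. This does not invalidate your argument on the stated interval, but if one tried to push beyond it the Cusick bound alone would not rescue you; it is the perturbative control that fails. (A smaller inaccuracy: the normalization $|p_t|\lesssim|t|^{\tilde b-\tilde a}$, $|q_t|\lesssim 1$ for $h_t^{(0)}$ is not always achievable when $2\tilde a>\tilde b$, but since the $kg_t$ term is eventually dominant this does not affect the conclusion.)
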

We note that the ratios 
$\frac{\tilde{a}}{\tilde{b}}=\limfi t{\infty}\frac{\log\left|a_t\right|}{\log\left|b_t\right|}$ (thought of as points in $P^1(\bR)$), and the curves 
are in 1-1 correspondence. 
In \S\ref{many_examples} we will show how to produce infinitely
many examples of mutually cubic root sequences $(a_t,b_t)$, which in turn produce countably many
distinct limits of the form $\limfi t{\infty}\frac{\log\left|a_t\right|}{\log\left|b_t\right|}$. 
The shape of unit lattices produced by these orders can be seen in Figure~\ref{fig:Shapes-of-units}.
As a consequence to the previous theorem, it is straightforward to see that a positive solution to the
following problem will imply Conjecture~\ref{conj:main}(\ref{conj:int}).
\begin{problem}\label{prob:cong}
Let $\Lambda\subset \bP(\bR)$ be the set of all the possible ratios $\frac{\tilde{a}}{\tilde{b}}$, where 
$\tilde{a}=\lim\frac{\log{a_t}}{\log t}$, $\tilde{b}=\lim\frac{\log{b_t}}{\log t}$ (not both zero), and $(a_t,b_t)$ 
is a mutually cubic root sequence. 
Is the interior of $\Lambda$
nonempty?
\end{problem}
We remark that in Corollary~\ref{cor:accpts} we show that $\Lambda$
 has infinitely many accumulation points.
 
 \subsection{Comparison with earlier results} 
This work is a succession of the discussion in~\cite{Shapira} in which the second named author addressed the above questions in regards to 
certain sequences of compact $A$-orbits (in any dimension). In that discussion all sequences of compact $A$-orbits exhibited full escape of mass
but more interestingly, the shapes 
of unit lattices there converged to a fixed shape which in dimension 3 is the shape of the regular triangle lattice which corresponds to the point
$\om=\exp(\frac{2\pi i}{3})$ on the corner of the fundamental domain in Figure~\ref{fig:Shapes-of-units}. Interestingly, in the present work when
we produce examples of orders for which the shapes 
of unit lattices converge to a shape not equal to $\om$ we only manage to establish partial escape of mass.

The problem of finding generators for the group of units $\cO^\times$ is classical. Explicit examples of computations  
may be found in \cite{minemura_totally_1998,thomas_fundamental_1979,grundman_systems_1995,minemura_totally_2004}, 
and most notably in the spirit of the current discussion, in~\cite{cusick_regulator_1991} where it is shown that the curve at the bottom of the fundamental domain in Figure~\ref{fig:Shapes-of-units},
$e^{2\pi i\theta}, \theta\in[\pi/3,2\pi/3]$ is contained in $\overline{\Om}$. 

\section{\label{Preliminaries}Preliminaries}
We now set up the number theoretic notation and terminology needed for our discussion.
For the general background from number theory see
for example \cite{algebraic_1999}.

Let $\bK/\bQ$ be a totally real number field of degree $n$.
A \emph{full module} $M$ in $\bK$ is an abelian subgroup $M=sp_{\bZ}\left\{ \alpha_{1},...,\alpha_{n}\right\} \leq\bK$ such that $\bQ M=\bK$. An \emph{order}  in $\bK$ is a full module which is also a unital ring. We denote by $\cO_\bK$ the ring of integers
which is the unique maximal order in $\bK$.
Let
$\sigma_{1},...,\sigma_{n}:\bK\to\bR$ be the $n$ distinct real embeddings of $\bK$.
The homomorphism $\varphi:\bK\to\bR^{n}$ defined by $\varphi(\alpha):=\left(\sigma_{i}(\alpha)\right)_{1}^{n}$
is an embedding which sends any full-module $M<\bK$
to a lattice in $\bR^{n}$. The \textit{discriminant} $D_M$ of $M$ is defined as the square of the covolume 
of $\varphi(M)$.  We denote by $D_{\bK}$
 the discriminant of of the ring of integers $\cO_\bK$.
Given a full module $M$ we define the \textit{associated order} of $M$ to be 
$\cO_{M}:=\left\{ \alpha\in\bK\;\mid\;\alpha M\subseteq M\right\}$, and denote by $\cO_M^\times$ the group of units of $\cO_M$. 
Note that $M$ is itself an order if and only if $M=\cO_M$.
Let $\psi:\bK\to\bR^n$ be defined by $\psi(\al)=(\log\av{\sig_i(\al)})_{1}^n$. Since the norm of a unit is $\pm 1$, $\psi(\cO_M^\times)\subset
\bR^n_0:=\set{x\in\bR^n:\sum_1^nx_i=0}$. Dirichlet's unit theorem says that $\psi(\cO_M^\times)$ is a lattice in $\bR^n_0$.
A collection $\set{\al_j}_1^{n-1}\subset \cO_M^\times$ is a \textit{fundamental set of units} if $\set{\psi(\al_j)}_1^{n-1}$ forms a
basis for $\psi(\cO_M^\times).$ The \textrm{Regulator} $R_M$
of $M$ is defined as the covolume of  the projection of $\psi(\cO_M^\times)$ into any copy of $\bR^{n-1}$ spanned by the axis in $\bR^n$. 
Equivalently, if 
$\left\{ \alpha_{j}\right\}_1^{n-1} \leq\cO_{M}^{\times}$
is a fundamental set of units
then $R_M$ is the
determinant of any $\left(n-1\right)\times\left(n-1\right)$ submatrix
of of the matrix $\left(\log\left|\sigma_{i}\left(\alpha_{j}\right)\right|\right)$
where $1\leq i\leq n$ and $1\leq j\leq n-1$.  If $\left\{ \alpha_{j}\right\} _{1}^{n-1}$ is
just a set of independent units, we shall call this determinant the
\emph{relative regulator}.

We now restrict our attention to orders in totally real cubic fields.
The following theorem and its corollary will give us the tool to prove that a pair of units is a fundamental pair.
This was used also in \cite{cusick_regulator_1991}.  
\begin{theorem}
[Cusick \cite{cusick_lower_1984}]\label{thm:cusik} 
For an order in a totally real cubic number field of discriminant $D$ and
regulator $R$, one has $\frac{R}{\log^{2}\left(\frac{D}{4}\right)}\geq\frac{1}{16}$.
In particular, for any sequence of such orders
with discriminants and regulators $D_{i},R_{i}$ respectively, we
have ${\displaystyle \liminf_{i\to\infty}}\frac{R_{i}}{\log^{2}\left(D_{i}\right)}\geq\frac{1}{16}$.
\end{theorem}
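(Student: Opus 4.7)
The plan is to deduce the theorem by combining a geometry-of-numbers argument for the logarithmic lattice of units with the standard inequality $D \leq D_\epsilon$ comparing the order discriminant with the discriminant of an element generating the field. The heuristic is: if $R$ is small, $\psi(\cO^\times)$ contains a short vector, producing a non-rational unit $\epsilon$ whose three real conjugates are close to $\pm 1$ in absolute value; but then $D_\epsilon = \prod_{i<j}(\sigma_i(\epsilon)-\sigma_j(\epsilon))^2$ is small, which in turn forces $D$ to be small.

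By Dirichlet's unit theorem, $\psi(\cO^\times)$ is a rank-$2$ lattice in $\bR^3_0$, and a direct computation shows that its Euclidean covolume equals $\sqrt{3}R$ (projection onto any coordinate plane scales area by $1/\sqrt{3}$). Applying Minkowski's first theorem to a Euclidean disc produces $\epsilon \in \cO^\times \setminus \{\pm 1\}$ with
\[
\max_{1\le i\le 3}\bigl|\log|\sigma_i(\epsilon)|\bigr| \;\le\; \|\psi(\epsilon)\|_2 \;\le\; c_0\sqrt{R}
\]
for an absolute constant $c_0$. Since $\bK$ is cubic and admits no intermediate subfield, any such $\epsilon\neq\pm 1$ automatically generates $\bK$ over $\bQ$.

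Setting $r_i:=\log|\sigma_i(\epsilon)|$ and arranging $r_1\ge r_2\ge r_3$ so that $r_1+r_2+r_3=0$, the crude bound $|\sigma_i(\epsilon)-\sigma_j(\epsilon)| \le e^{r_i}+e^{r_j}\le 2e^{\max(r_i,r_j)}$ yields
\[
D_\epsilon = \prod_{i<j}\bigl(\sigma_i(\epsilon)-\sigma_j(\epsilon)\bigr)^2 \;\le\; 64\, e^{2(2r_1+r_2)} \;=\; 64\, e^{2(r_1-r_3)} \;\le\; 64\exp(4c_0\sqrt{R}).
\]
On the other hand, since $\bZ[\epsilon]\subseteq\cO\subseteq\cO_\bK$ one has $D = [\cO_\bK:\cO]^2 D_\bK \le [\cO_\bK:\bZ[\epsilon]]^2 D_\bK = D_\epsilon$. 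Combining the two estimates gives $\log(D/64)\le 4c_0\sqrt{R}$, i.e.\ an inequality of the form $R \ge c_1 \log^2 D$ with an absolute constant $c_1$, from which the $\liminf$ statement follows at once.

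The main obstacle is recovering the sharp constants $\tfrac{1}{16}$ and $D/4$ as written: the scheme above yields $c_1$ noticeably smaller than $\tfrac{1}{16}$. Sharpening appears to require (a) replacing Minkowski's first theorem by Minkowski's theorem on successive minima and choosing the test unit more carefully (possibly from a specific pair of independent units rather than a single shortest one), and (b) exploiting the signs of the conjugates of $\epsilon$: whenever $\sigma_i(\epsilon)$ and $\sigma_j(\epsilon)$ carry the same sign one has $|\sigma_i(\epsilon)-\sigma_j(\epsilon)|=|e^{r_i}-e^{r_j}|$, which is typically much smaller than the bound $e^{r_i}+e^{r_j}$ used above. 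Accounting carefully for these two refinements is the quantitative heart of Cusick's original argument and is where the explicit constants in the statement are produced.
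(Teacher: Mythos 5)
The paper does not reprove this result: it is cited from Cusick's 1984 article, with the single remark that Cusick's argument, stated there for maximal orders, ``works verbatim for a general order.'' So there is no in-paper proof to compare against; the relevant question is whether your sketch would actually establish the inequality as stated.

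It does not, and you say so yourself. Your scheme --- covolume of $\psi(\cO^\times)$ in $\bR^3_0$ equals $\sqrt{3}R$, Minkowski to extract a unit $\epsilon$ with $\lVert\psi(\epsilon)\rVert_2\le c_0\sqrt{R}$, then $D\le D_\epsilon\le 64\,e^{4c_0\sqrt{R}}$ via the crude bound $|\sigma_i(\epsilon)-\sigma_j(\epsilon)|\le 2e^{\max(r_i,r_j)}$ --- is sound as far as it goes, and each step checks out (including the reduction $D\le D_\epsilon$ via $\bZ[\epsilon]\subseteq\cO\subseteq\cO_\bK$ and the fact that $\epsilon\ne\pm1$ generates the cubic field). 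But it yields only $R\ge c_1\log^2 D$ with $c_1=\tfrac{1}{16c_0^2}$, and with the Minkowski disc one gets $c_0^2=\tfrac{4\sqrt{3}}{\pi}\approx 2.2$, hence $c_1\approx\tfrac{1}{35}$, well below $\tfrac{1}{16}$.

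This is not a cosmetic shortfall. The constant $\tfrac{1}{16}$ is load-bearing in the paper. Corollary~\ref{fundamental_units} derives the fundamentality criterion with threshold $\tfrac{1}{8}=2\cdot\tfrac{1}{16}$, and in both Theorem~\ref{thm:fixed_parameters} and Theorem~\ref{thm:tight_mass_escape} the relative-regulator ratio is shown to converge to $\tfrac{1}{16}$ exactly (or to lie just below $\tfrac{1}{8}$); indeed Cusick's family of simplest cubic fields is precisely the one attaining $\liminf R_i/\log^2 D_i=\tfrac{1}{16}$, as the paper notes in \S\ref{Construction-of-cubic}. A weakened constant $c_1<\tfrac{1}{16}$ would make the threshold $2c_1<\tfrac{1}{8}$, and the downstream conclusions that the exhibited units are fundamental would fail. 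So the quantitative sharpening is not an optional refinement of the cited theorem; it is precisely what makes the theorem usable here. Your diagnosis of where the slack is --- using successive minima rather than a single short vector, and using the sign pattern of the three real conjugates to replace $e^{r_i}+e^{r_j}$ by $|e^{r_i}-e^{r_j}|$ on at least one of the three pairs --- does point at the content of Cusick's actual argument, but until those estimates are carried out the claim as stated is not established.
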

We remark that the formulation of this result in \cite{cusick_lower_1984} is for maximal orders but 
that the proof works verbatim for a general order.
\begin{corollary}
\label{fundamental_units}Let $\bK$ be a totally real cubic field,
and let $M\leq\bK$ be an order with discriminant $D$ and regulator
$R$. If $\left\{ \alpha_{1},\alpha_{2}\right\} \leq M^{\times}$
is an independent set of units with relative regulator $R'$ such
that $\frac{R'}{\log^{2}\left(\frac{D}{4}\right)}<\frac{1}{8}$, then
they must be a fundamental set. \end{corollary}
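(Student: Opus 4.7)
The plan is to leverage Cusick's lower bound (Theorem~\ref{thm:cusik}) against the hypothesis by relating the relative regulator $R'$ to the true regulator $R$ through the index of a sublattice.

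First I would recall that because $\bK$ is totally real cubic, Dirichlet's theorem gives $\cO_M^\times \cong \{\pm 1\}\times \bZ^{2}$, and the map $\psi$ descends to an isomorphism of $\cO_M^\times/\{\pm 1\}$ onto the rank-$2$ lattice $\psi(\cO_M^\times)\subset \bR^3_0$ whose covolume (in the relevant projection) is exactly the regulator $R$. Two independent units $\alpha_1,\alpha_2$ give $\psi(\alpha_1),\psi(\alpha_2)$, which span a sublattice $\Lambda'\subset \psi(\cO_M^\times)$ of some finite index $k\defi [\psi(\cO_M^\times):\Lambda']\in \bN$. The definitions of $R$ and $R'$ (as determinants of $2\times 2$ minors of the matrices of logarithms) immediately give
\[
R' = k\cdot R.
\]
Saying that $\{\alpha_1,\alpha_2\}$ is a fundamental set of units is exactly the statement that $k=1$, so the corollary reduces to proving $k=1$.

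Next I would plug in Cusick's inequality. From Theorem~\ref{thm:cusik} applied to the order $M$ we have $R\geq \tfrac{1}{16}\log^2(D/4)$. Combining with $R'=kR$ and the hypothesis $R'<\tfrac{1}{8}\log^2(D/4)$ yields
\[
\frac{k}{16}\log^2\!\pa{\tfrac{D}{4}} \le kR = R' < \frac{1}{8}\log^2\!\pa{\tfrac{D}{4}},
\]
so $k<2$. Since $k$ is a positive integer this forces $k=1$, which is precisely what we wanted.

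There is no real obstacle here beyond correctly identifying the relation $R'=kR$: the content of the corollary is entirely contained in Cusick's bound, which already does the hard work. The only point one must be careful about is that the regulator and the relative regulator are computed using the \emph{same} choice of $2\times 2$ submatrix (equivalently, the same projection $\bR^3_0\to\bR^2$), so that the ratio is the index of one lattice in the other rather than a ratio of covolumes in different inner products; this is immediate from the definitions recalled in \S\ref{Preliminaries}.
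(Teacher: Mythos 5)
Your proof is correct and follows essentially the same route as the paper: both rest on the identity $R'=kR$ for the index $k=[\psi(\cO_M^\times):\Lambda']$ together with Cusick's lower bound. The paper phrases it as a contradiction (assume $k\ge 2$, deduce $R\le R'/2$ and hence $R/\log^2(D/4)<1/16$), while you argue directly that $k<2$ so $k=1$; these are the same argument.
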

\begin{proof}
If $\left\{ \alpha_{1},\alpha_{2}\right\} $ is not a fundamental
set, then the regulator of $M$ would satisfy $R\leq\frac{1}{2}R'$
and therefore $\frac{R}{\log^{2}\left(\frac{D}{4}\right)}<\frac{1}{16}$
which is a contradiction to the previous theorem.
\end{proof}
We briefly describe the relation between full modules and compact $A$-orbits in the space of lattices.
The space of unimodular lattices  is identified as usual with $X:= \SL_n(\bR)/\SL_n(\bZ)$ and we denote by $A\leq \SL_n(\bR)$ the subgroup of positive diagonal matrices.
Given a full module $M$ in a totally real degree $n$ number field $\bK$ with embeddings $\set{\sig_i}_1^n$, we denote $L_M:=D_M^{-\frac{1}{2n}}\vphi(M)\in X$.
The compactness of the orbit $AL_M$ is a consequence of Dirichlet's theorem as we now explain. 
This compactness is equivalent to the statement that 
$\on{stab}_{\bR^n_0}(L_M):=\set{x\in\bR^n_0: \exp(x)L_M=L_M}$ is a lattice in 
$\bR^n_0$, where here $\exp:\bR^n_0\to A$ is given by $\exp(x):=\diag{e^{x_1},\dots,e^{x_n}}$.
It is straightforward that for $\alpha\in \bK^\times$, $L_{\alpha M}=a_\alpha L_M$ where $a_\alpha:=\diag{\sig_1(\al),\dots,\sig_n(\al)}$ on the diagonal. Therefore, if $\alpha\in \cO_M^\times$ then $a_\alpha L_M=L_M$ and $\det a_\al=\pm 1$ (because $\alpha$ has norm $\pm 1$). If all the 
$\sig_i(\al)$'s are positive then $\psi(\al)\in\on{stab}_{\bR^n_0}(L_M)$. In fact, the converse is also true (see~\cite{ELMV_periodic_torus, LW, McMullenMinkowski, ShapiraWeiss}),that is, if we set
$\cO_M^{\times,+}:=\set{\al\in \cO_M^\times:\forall i, \sig_i(\al)>0}$, then 
\begin{equation*}\label{eq:stab}
\psi(\cO_M^{\times,+}) =  \on{stab}_{\bR^n_0}(L_M).
\end{equation*} 
Now since $\psi(\cO_M^{\times,+})$ is a finite index subgroup of $\psi(\cO_M^\times)$, and the latter is a lattice
in $\bR^n_0$ by Dirichlet's theorem, we conclude that 
$\psi(\cO_M^{\times,+})$ is a lattice as well.

\begin{remark}\label{rem:shape of orbit}
We note two things. First, it is a classical fact (that we will not use), that all compact $A$-orbits are of the form $AL_M$ for some full module $M$ as above
(see any of  ~\cite{ELMV_periodic_torus, LW, McMullenMinkowski, ShapiraWeiss}). 
Second, although strictly speaking, the shape of the orbit $AL_M$ should be defined to be the (equivalence class up to similarity of) lattice $\psi(\cO_M^{\times,+})$,
it is much more natural from the number theoretic point of view to work with the lattice $\psi(\cO_M^\times)$. Although by referring to the equivalence class of the latter lattice as the \textit{shape} of the orbit (and not as the shape of the unit lattice) we are a bit misleading, we find this slight abuse harmless.
\end{remark}

We turn now to present 
the necessary tools to establish the escape of mass in our results. 
For a more thorough
discussion the reader is referred to~\cite{Shapira}.
\begin{definition}
Let $L\in X$ be a unimodular lattice.
\begin{enumerate}
\item We define the height of $L$ to be
\[
ht(L)=\left(\min\left\{ \norm v\;\mid\;0\neq v\in L\right\} \right)^{-1}=\max\left\{ \norm v^{-1}\;\mid\;0\neq v\in L\right\} .
\]

\item For $H\geq0$ we define $X^{\leq H}$ (resp. $<,\geq,>$) by
\[
X^{\leq H}=\left\{ L\in X\;\mid\;ht(L)\leq H\right\} .
\]

\end{enumerate}
\end{definition}
The sets $X^{\leq H}$ are compact and $X=\bigcup_{H}X^{\leq H}$. 
The statement that a sequence of periodic $A$-orbits $Ax_k$ exhibits $c$-escape of mass for $0<c\le 1$  is equivalent to the statement that for any $H>0, \eps>0$ and any $k$ large enough $\mu_{Ax_k}(X^{\ge H})\ge c-\eps$. 

The minor difference between $\psi(\cO_M^{\times,+})$ and $\psi(\cO_M^\times)$ does not play any role in the discussion of escape of mass because of the following.
\begin{lemma}
Let $M$ be a full module in a totally real number field as above. 
The height map $h:\bR^n_0/\psi(\cO_M^{\times,+})\to \bR$ given by $h(x):=ht(\exp(x)L_M)$ factors through
$\bR^n_0/\psi(\cO_M^\times)$.
\end{lemma}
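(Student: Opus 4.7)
The plan is to verify the invariance directly: I need to show that for every $\alpha \in \cO_M^\times$ and every $x \in \bR^n_0$, we have $h(x + \psi(\alpha)) = h(x)$, i.e.\ $ht(\exp(x+\psi(\alpha))L_M) = ht(\exp(x)L_M)$. The obstruction the lemma addresses is that when $\alpha$ is not totally positive, $\exp(\psi(\alpha))$ is not the multiplication-by-$\alpha$ matrix itself but only its absolute value, so $\exp(\psi(\alpha))L_M$ need not equal $L_M$. The key observation is that the discrepancy is a diagonal sign matrix, which is an isometry and hence preserves the height.

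More precisely, for $\alpha \in \cO_M^\times$, let $a_\alpha = \diag{\sigma_1(\alpha),\ldots,\sigma_n(\alpha)}$ and $s_\alpha = \diag{\on{sign}(\sigma_1(\alpha)),\ldots,\on{sign}(\sigma_n(\alpha))}$, so that
\[
a_\alpha = s_\alpha \cdot \exp(\psi(\alpha)).
\]
Since $\alpha M = M$, we have $a_\alpha \varphi(M) = \varphi(M)$ and hence $a_\alpha L_M = L_M$. Therefore
\[
\exp(\psi(\alpha))L_M = s_\alpha^{-1} a_\alpha L_M = s_\alpha L_M,
\]
using $s_\alpha^2 = I$. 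The point is that although this is not $L_M$ itself when $\alpha$ is not totally positive, it differs by the orthogonal sign matrix $s_\alpha$.

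Now I would use that $\exp(x)$ and $s_\alpha$ are both diagonal and hence commute, together with the fact that $s_\alpha$ preserves Euclidean norms (so it preserves the shortest nonzero vector of any lattice). This yields
\[
\exp(x+\psi(\alpha))L_M = \exp(x)\exp(\psi(\alpha))L_M = \exp(x) s_\alpha L_M = s_\alpha \exp(x) L_M,
\]
and therefore
\[
ht\bigl(\exp(x+\psi(\alpha))L_M\bigr) = ht\bigl(s_\alpha \exp(x) L_M\bigr) = ht\bigl(\exp(x)L_M\bigr),
\]
which is the required $\psi(\cO_M^\times)$-invariance.

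There is no real obstacle here: the argument is entirely formal once one notices the sign-matrix decomposition $a_\alpha = s_\alpha \exp(\psi(\alpha))$. The finite index of $\cO_M^{\times,+}$ in $\cO_M^\times$ already guarantees that $\psi(\cO_M^\times)$ is a lattice in $\bR^n_0$ containing $\psi(\cO_M^{\times,+})$, so the factored map $\bR^n_0/\psi(\cO_M^\times) \to \bR$ is well defined as soon as invariance is established.
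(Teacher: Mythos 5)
Your proof is correct and follows essentially the same route as the paper: both hinge on the decomposition $\exp(\psi(\alpha)) = s_\alpha a_\alpha$ (the paper's $J_\alpha$ is your $s_\alpha$), use $a_\alpha L_M = L_M$, and conclude via the fact that the diagonal sign matrix is an isometry commuting with $\exp(x)$. Nothing to add.
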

\begin{proof}
if $x,y\in\bR^n_0$ are such that $x-y\in\psi(\cO_M^\times)$ then there exists $\al\in\cO_M^\times$ and a diagonal $\pm 1$ matrix
$J_\al$ such that $\exp(x-y) =J_\al a_\al$. Since $J_\al$ acts as an isometry on $\bR^n$ and $a_\al L_M=L_M$ we get that 
\begin{align*}
h(x)&=ht(\exp(x)L_M)=ht(\exp(y)\exp(x-y)L_M)\\
&=ht(\exp(y)J_\al a_\al L_M)
= ht(J_\al\exp(y)L_M)=ht(\exp(y)L_M)=h(y).
\end{align*}
\end{proof} 
\begin{corollary}\label{cor:escape}
Let $AL_M$ be a compact $A$-orbit as above, let $F$ be a fundamental domain for $\psi(\cO_M^\times)$ in 
$\bR^n_0$, let $\lam$ denote the Lebesgue measure on $\bR^n_0$ and let $\mu_{AL_M}$ be 
the periodic $A$-invariant probability measure on the orbit $AL_M$. Let $h:\bR^n_0\to\bR$ be the height function $h(x):=ht(\exp(x)L_M)$.
Then, for any $H>0$ we have $\mu_{AL_M}(X^{> H}) = \frac{1}{\lam(F)}\lam(\set{x\in F: h(x)> H})$.
\end{corollary}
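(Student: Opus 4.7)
The plan is to transport the measure $\mu_{AL_M}$ back to $\bR^n_0$ via the exponential map, use the fact that $\psi(\cO_M^{\times,+})$ is the stabilizer of $L_M$ in order to write the orbit as a quotient of $\bR^n_0$, and finally use the preceding lemma to pass from a fundamental domain for the smaller lattice $\psi(\cO_M^{\times,+})$ to one for the larger lattice $\psi(\cO_M^\times)$.

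More concretely, I would first consider the map $\Phi:\bR^n_0\to AL_M$ defined by $\Phi(x)=\exp(x)L_M$. By the identification $\psi(\cO_M^{\times,+})=\on{stab}_{\bR^n_0}(L_M)$ recalled just before the corollary, $\Phi$ factors through a homeomorphism $\overline{\Phi}:\bR^n_0/\psi(\cO_M^{\times,+})\to AL_M$. Since $\exp$ transports the Lebesgue measure $\lambda$ on $\bR^n_0$ to a Haar measure on $A$, the pushforward under $\overline{\Phi}$ of the normalized Lebesgue measure on any fundamental domain $F_0$ for $\psi(\cO_M^{\times,+})$ is an $A$-invariant probability measure on $AL_M$, hence equals $\mu_{AL_M}$ by uniqueness. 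Therefore, for any $H>0$,
\begin{equation*}
\mu_{AL_M}(X^{>H})=\frac{1}{\lambda(F_0)}\,\lambda\bigl(\{x\in F_0:h(x)>H\}\bigr),
\end{equation*}
where $h(x)=ht(\exp(x)L_M)$ as in the lemma.

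Next, I would pass from $F_0$ to a fundamental domain $F$ for the larger lattice $\psi(\cO_M^\times)$. Writing $k:=[\psi(\cO_M^\times):\psi(\cO_M^{\times,+})]$ (a finite index, by Dirichlet), we may choose coset representatives $v_1,\ldots,v_k\in\psi(\cO_M^\times)$ and take $F_0$ (up to a measure zero set) as the disjoint union $\bigsqcup_{j=1}^{k}(F+v_j)$. Then $\lambda(F_0)=k\,\lambda(F)$. The preceding lemma shows that $h$ is invariant under translation by $\psi(\cO_M^\times)$, so $h(x+v_j)=h(x)$ for every $j$, and hence
\begin{equation*}
\lambda\bigl(\{x\in F_0:h(x)>H\}\bigr)=\sum_{j=1}^{k}\lambda\bigl(\{x\in F+v_j:h(x)>H\}\bigr)=k\,\lambda\bigl(\{x\in F:h(x)>H\}\bigr).
\end{equation*}
Substituting into the previous displayed equation, the factors of $k$ cancel and we obtain the claimed identity.

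The main (mild) obstacle is the normalization step: one has to make sure that the pushforward of normalized Lebesgue on $F_0$ really is the periodic $A$-invariant probability measure on $AL_M$, which uses the standard fact that $\exp:\bR^n_0\to A$ is a group isomorphism carrying Lebesgue measure to a Haar measure on $A$, together with compactness of $AL_M$ and uniqueness of $A$-invariant probability measures on a compact homogeneous space. Everything else is formal manipulation with the finite-index sublattice.
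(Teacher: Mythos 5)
The paper leaves this corollary without a written proof, treating it as an immediate consequence of the preceding lemma and of the identification $\psi(\cO_M^{\times,+})=\on{stab}_{\bR^n_0}(L_M)$; your argument is exactly the one the paper is implicitly invoking, and it is correct. Nothing to add.
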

In practice, the way we prove escape of mass is by using the above corollary: We find a good fundamental domain for the unit
lattice $\psi(\cO_M^\times)$ on most of which we have control on the height.

Henceforth we restrict our discussion to dimension $n=3$. We now explain how to we choose good fundamental domains
for $\psi(\cO_M^\times)$ in which we control the height in a good enough manner. We need to introduce a few definitions first.
\begin{definition}
\label{simplex_set}A set $\Phi=\left\{ \alpha_{1},\alpha_{2},\alpha_{3}\right\} \subseteq\bR_{0}^{3}$
is called a \emph{simplex set} if $span_{\bR}\Phi=\bR_{0}^{3}$ and
$\sum_{1}^{3}\alpha_{i}=0$.

Denote by $\Delta_{\Phi}=span_{\bZ}\left\{ \Phi\right\} $ the lattice
generated by $\Phi$ and by $W_{\Phi}$ the set
\[
W_{\Phi}=\left\{ \sum_{1}^{3}\lambda_{i}\alpha_{i}\;\mid\;\left\{ \lambda_{1},\lambda_{2},\lambda_{3}\right\} =\left\{ 0,\frac{1}{3},\frac{2}{3}\right\} \right\} .
\]

\end{definition}
Since any simplex set $\Phi$ is a linear image of the simplex set giving rise to the 
regular triangle lattice, we conclude from Figure \ref{fig:A-fundamental-domain} that
$\on{conv}\left(W_{\Phi}\right)$ is a fundamental domain for the lattice $\Del_\Phi$.
\begin{figure}
\begin{centering}
\includegraphics[scale=0.65]{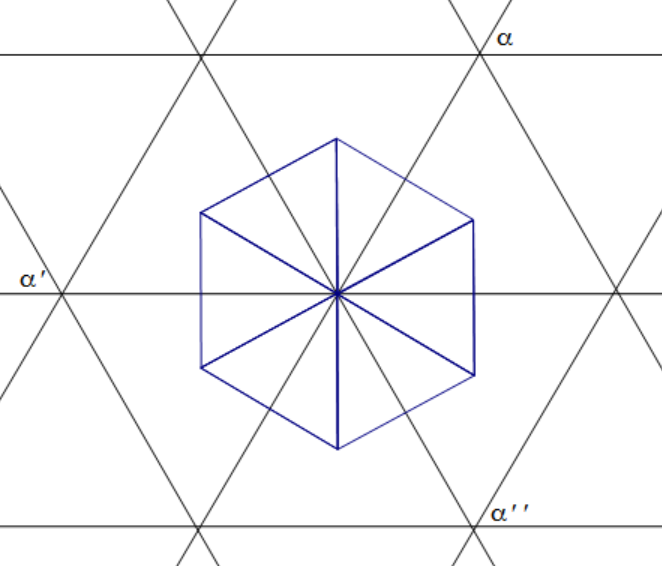}
\par\end{centering}

\caption{\label{fig:A-fundamental-domain}The blue hexagon, which equals $\on{conv}(W_\Phi)$, 
is a fundamental domain for the regular triangles lattice generated
by $\Phi=\{\alpha,\alpha',\alpha''\}$. The six points $W_\Phi$ are its vertices.}
\end{figure}

\begin{definition}
For a vector $v=\left(v^{(1)},v^{(2)},v^{(3)}\right)\in\bR_{0}^{3}$
we write $\left\lceil v\right\rceil =\max v^{(i)}$. For a set $\tilde{\Phi}$
we denote $\left\lceil \tilde{\Phi}\right\rceil ={\displaystyle \max_{v\in\tilde{\Phi}}}\left\lceil v\right\rceil $.\end{definition}
\begin{lemma}\label{lem:regdiv}
Let $\Phi_i$ be a sequence of simplex sets such that $\Phi_i\subset \psi(\cO_i^\times)$, where $\cO_i$ is a sequence
of distinct orders in totally real cubic fields. Then, $\lceil W_{\Phi_i}\rceil\to\infty.$
\end{lemma}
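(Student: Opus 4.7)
The strategy is to compare the area of the fundamental hexagon $\on{conv}(W_{\Phi_i})$ to the regulator $R_i$ of $\cO_i$, and then invoke Cusick's lower bound (Theorem~\ref{thm:cusik}) together with the fact that the discriminants $D_i$ must tend to infinity.

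First, I would observe that because the orders $\cO_i$ are pairwise distinct, their discriminants satisfy $D_i\to\infty$. This follows from the classical finiteness statement that there are only finitely many orders in totally real cubic number fields with discriminant bounded by any given constant (only finitely many fields by Hermite, and within each field only finitely many suborders of bounded index in $\cO_\bK$). Combining this with Theorem~\ref{thm:cusik} yields
\[
R_i \;\geq\; \tfrac{1}{16}\log^2\!\pa{D_i/4} \;\lra\; \infty.
\]

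Next, I would relate $R_i$ to the covolume of $\Delta_{\Phi_i}$ in $\bR^3_0$. Since $\Phi_i\subset \psi(\cO_i^\times)$ and $\Phi_i$ is a simplex set, $\Delta_{\Phi_i}$ is a full-rank sublattice of the unit lattice $\psi(\cO_i^\times)$, so its covolume in $\bR^3_0$ is an integer multiple of the covolume of $\psi(\cO_i^\times)$. Up to a universal multiplicative constant $c>0$ (accounting for the identification of $\bR^3_0$ with $\bR^2$ used in the definition of the regulator), this gives
\[
\on{covol}_{\bR^3_0}\!\pa{\Delta_{\Phi_i}} \;\geq\; c\, R_i.
\]

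Finally, I would bound $\on{covol}(\Delta_{\Phi_i})$ from above in terms of $M_i:=\lceil W_{\Phi_i}\rceil$. By Figure~\ref{fig:A-fundamental-domain} and the definition, $\on{conv}(W_{\Phi_i})$ is a fundamental domain for $\Delta_{\Phi_i}$, so its planar area equals $\on{covol}(\Delta_{\Phi_i})$. For any $v\in\bR^3_0$ one has $\|v\|_\infty\le 2\lceil v\rceil$ (since the other two coordinates of $v$ sum to $-v^{(j)}$ when $v^{(j)}=\max v^{(i)}$), hence each vertex $v\in W_{\Phi_i}$ lies in a Euclidean ball of radius $O(M_i)$. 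Therefore the hexagon is contained in such a ball and its area is $O(M_i^2)$. Chaining the bounds,
\[
M_i^2 \;\gtrsim\; \on{covol}\!\pa{\Delta_{\Phi_i}} \;\gtrsim\; R_i \;\lra\; \infty,
\]
which gives $\lceil W_{\Phi_i}\rceil\to\infty$ as desired. The only mildly delicate point is the passage $\|v\|_\infty\lesssim\lceil v\rceil$ for $v\in\bR^3_0$, but this is immediate from the constraint $\sum v^{(j)}=0$; everything else is a direct concatenation of standard facts.
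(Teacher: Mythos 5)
Your argument is correct and follows essentially the same route as the paper's: bound $\lceil W_{\Phi_i}\rceil$ from below via the covolume of $\Delta_{\Phi_i}$, which dominates (a constant times) the regulator $R_i$, and then argue $R_i\to\infty$ because the orders are distinct. The only cosmetic difference is that you get $R_i\to\infty$ by passing through $D_i\to\infty$ and Cusick's bound, while the paper invokes directly the finiteness of orders with bounded regulator (which for cubic orders is of course equivalent, again by Cusick); you also supply the elementary area estimate that the paper labels ``not hard to show.''
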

\begin{proof}
It is not hard to show that if the covolume of the lattice $\Del_\Phi$ goes to $\infty$ then so does $\lceil W_\Phi\rceil.$ The lemma now follows because the covolume of $\Del_{\Phi_i}$ is proportional to the regulator $R_{\cO_i}$ and it is well known that 
there are only finitely many orders with regulators under a given bound, the lemma follows.
\end{proof}
In Theorem~\ref{thm:partial escape} below we see that the term $\left\lceil W_{\Phi}\right\rceil $
controls the escape of mass. Before stating this theorem we need the following.
\begin{definition}\label{def:tight}
Let $M$ be a full module in a totally real cubic number field. We say that a simplex set 
$\Phi\subseteq\psi(\cO_M^\times)$ is $\left(R,r\right)$-\textit{tight}
for $R\geq1$ and $0\leq r\leq1$ if $\exp\left(r\left\lceil W_{\Phi}\right\rceil \right)\leq ht\left(L_M\right)R$.\end{definition}
\begin{theorem}\label{thm:partial escape}
\label{mass_escape} Let $R,r>0$ be fixed. 
Let $M_i$ be a sequence of full modules in totally real cubic number fields with distinct 
associated orders $\cO_i$. Let $\Phi_{i}\subseteq\psi(\cO_i^\times)$ be simplex sets which are
$\left(R,r\right)$-tight. Then the sequence of periodic $A$-orbits $AL_{M_i}$ exhibits $r^{2}$-escape of mass.
\end{theorem}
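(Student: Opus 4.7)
The plan is to estimate $\mu_{AL_{M_i}}(X^{\ge H})$ via Corollary~\ref{cor:escape} by exhibiting a large subset of a fundamental domain on which the height function $h(x)=ht(\exp(x)L_{M_i})$ is bounded below by $H$. The natural candidate is $\on{conv}(W_{\Phi_i})$, which by the discussion after Definition~\ref{simplex_set} is a fundamental domain for the lattice $\Del_{\Phi_i}=\on{span}_\bZ \Phi_i$. This is only a finite index sublattice of $\psi(\cO_i^\times)$, but since $h$ is $\psi(\cO_i^\times)$-invariant the ratio
\[
\frac{\lambda\pa{\set{x\in \on{conv}(W_{\Phi_i}) : h(x)\ge H}}}{\lambda\pa{\on{conv}(W_{\Phi_i})}}
\]
equals the analogous ratio taken over a genuine fundamental domain for $\psi(\cO_i^\times)$, and therefore by Corollary~\ref{cor:escape} it equals $\mu_{AL_{M_i}}(X^{\ge H})$. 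So it suffices to bound this ratio from below by $r^2-\eps$.

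The core estimate on $h$ comes from the elementary inequality $\|\exp(x)v\|^2=\sum_j e^{2x_j}v_j^2\le e^{2\lceil x\rceil}\|v\|^2$. Applied to a shortest vector of $L_{M_i}$ this yields $h(x)\ge ht(L_{M_i})e^{-\lceil x\rceil}$; combining with the tightness assumption $ht(L_{M_i})\ge e^{r\lceil W_{\Phi_i}\rceil}/R$ gives
\[
h(x)\ge \frac{\exp\pa{r\lceil W_{\Phi_i}\rceil - \lceil x\rceil}}{R}.
\]
Now fix $t<r$. Using $\alpha_1+\alpha_2+\alpha_3=0$ for any simplex set one checks that the centroid of $\on{conv}(W_{\Phi_i})$ is the origin, so the homothetic dilate $\on{conv}(tW_{\Phi_i})$ lies inside $\on{conv}(W_{\Phi_i})$ and has planar measure $t^2\lambda(\on{conv}(W_{\Phi_i}))$. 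Since $\lceil\cdot\rceil$ is convex its maximum on this dilate is attained at a vertex, so $\lceil x\rceil\le t\lceil W_{\Phi_i}\rceil$ throughout $\on{conv}(tW_{\Phi_i})$, and consequently $h(x)\ge e^{(r-t)\lceil W_{\Phi_i}\rceil}/R$ there.

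To finish, given $H>0$ and $\eps>0$ I choose $t<r$ with $t^2>r^2-\eps$. By Lemma~\ref{lem:regdiv} one has $\lceil W_{\Phi_i}\rceil\to\infty$, so for all sufficiently large $i$ the lower bound $e^{(r-t)\lceil W_{\Phi_i}\rceil}/R$ exceeds $H$. Hence the set $\set{x\in \on{conv}(W_{\Phi_i}) : h(x)\ge H}$ contains $\on{conv}(tW_{\Phi_i})$ and its relative measure is at least $t^2$, giving $\mu_{AL_{M_i}}(X^{\ge H})\ge t^2>r^2-\eps$ for $i$ large, which is the required $r^2$-escape of mass. The main subtlety is the measure-theoretic reduction in the first paragraph, which exploits the $\psi(\cO_i^\times)$-invariance of $h$ to replace a fundamental domain for the full unit lattice with the more convenient hexagon $\on{conv}(W_{\Phi_i})$; everything else is a clean geometric comparison between a hexagon and a concentric dilate, powered by Lemma~\ref{lem:regdiv}.
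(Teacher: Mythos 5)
Your proof is correct and follows the same geometric route as the paper: show that the dilate $t\cdot\operatorname{conv}(W_{\Phi_i})$ lies inside the high-height region $\{h\ge H\}$ for $i$ large, then compare areas, feeding in the tightness hypothesis and Lemma~\ref{lem:regdiv}. One genuine improvement over the paper's writeup is that you make explicit the finite-index reduction needed to invoke Corollary~\ref{cor:escape}: the hexagon $\operatorname{conv}(W_{\Phi_i})$ is a fundamental domain only for the sublattice $\Delta_{\Phi_i}\le\psi(\mathcal{O}_i^\times)$, and your observation that the ratio is unchanged because $h$ is $\psi(\mathcal{O}_i^\times)$-periodic is exactly the point the paper applies silently.
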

\begin{proof}
Consider the height function $h:\on{conv}(W_{\Phi_i})\to \bR$ given by $h(x)=ht(\exp(x)L_{M_i})$. We show that for any $H>0$, any
$r_0<r$, and any large enough $i$,  $r_0\cdot \on{conv}(W_{\Phi_i})\subset \set{x\in \on{conv}(W_{\Phi_i}): h(x)>H}$ and so, by Corollary~\ref{cor:escape}, since $r_0^2=\frac{\lam(r_0\cdot \on{conv}(W_{\Phi_i}))}{\lam(\on{conv}(W_{\Phi_i}))}$, the sequence exhibits 
$r^2$-escape of mass
as claimed.

To this end, fix $H>0, r_0<r$ and let $x\in r_{0}\cdot \on{conv}\left(W_{\Phi_{i}}\right)\subseteq\bR_{0}^{3}$
for some $0\leq r_{0}<r$ and write $x={\displaystyle \sum_{\beta\in W_{\Phi_{i}}}}\lambda_\be\beta$
as a convex combination. For $0\neq v\in L_{M_i}$ of norm $ht(L_{M_i})^{-1}$
we get that
\begin{eqnarray*}
\norm{\exp\left(x\right)v} & \leq & \norm v\max_{1\le \ell\le 3}\left(\exp\left(x_\ell\right)\right)\leq\norm v\max_{\ell}
(\exp(\sum_{\beta\in W_{\Phi_{i}}}\lambda_\be\beta^{(\ell)}))^{r_{0}}\\
 & \leq & 
 \norm v\left(\exp\left\lceil W_{\Phi_{i}}\right\rceil \right)^{r_{0}}\leq R\left(\exp\left\lceil W_{\Phi_{i}}\right\rceil \right)^{r_{0}-r}.
\end{eqnarray*}
It follows that for $x\in r_0\cdot \on{conv}(W_{\Phi_i})$, $h(x)>R^{-1} (\lceil W_{\Phi_{i}}\rceil )^{r-r_0}$ and the latter expression
is greater than $H$ for $i$ large enough since $\left\lceil W_{\Phi_{i}}\right\rceil \to\infty$ by
Lemma~\ref{lem:regdiv}.
\end{proof}

\section{\label{Construction-of-cubic}Construction of cubic orders}
\subsection{Generalizing the simplest cubic fields} To motivate the constructions presented below we begin by reviewing a
classical family of cubic fields. These are known as
the \emph{simplest cubic fields}.  They get their name from the ease in the
computation of their integer ring, integral units and other important
algebraic invariants. This example was used by Cusick in \cite{cusick_lower_1984}
to show that the limit $\limfi i{\infty}\frac{R_{i}}{\log^{2}\left(D_{i}\right)}=\frac{1}{16}$ in Theorem~\ref{thm:cusik}
can be attained. This family is defined by the polynomials
\[
f_t(x)=x^{3}-tx^{2}-(t+3)x-1=\left(x^{3}-3x-1\right)-t\cdot x\left(x+1\right)\quad t\in\bZ.
\]
It is well known that for infinitely many $t$ we have that $\cO_{\bK_{t}}=\bZ\left[\theta_{t}\right]$
where $\theta_{t}$ is a root of $f_{t}\left(x\right)$, and the unit
group is generated by $\theta_{t},\theta_{t}+1$. The fact that these
are indeed units is easy to see from the polynomials $f_{t}$. The
norm of $\theta_{t}$ is just the free coefficient of $f_{t}$, namely
$f_{t}(0)=-1$, so that $\theta_{t}$ is an integral unit. The norm
of $\theta_{t}+1$ is the free coefficient of $f_{t}(x-1)$, namely
it is $f_{t}(-1)=\left(-1\right)^{3}+3-1=1$ so it is again a unit.
Note that the norms are independent of $t$, since $0,-1$ are roots
of $x(x+1)$.

With this idea in mind, we construct below polynomials $f_{a,b,c,d,t}(x)$ giving rise to orders of the
form $\bZ\left[\theta\right]$ such that their unit group is generated
by $a\theta-b,\;c\theta-d$ and $-1$ (for $t$ large enough). These types of orders were
studied in \cite{minemura_totally_1998,thomas_fundamental_1979,grundman_systems_1995}
with some restrictions on $a,b,c,d$ and in greater generality in
\cite{minemura_totally_2004}, though with a rather complex set of conditions
on $a,b,c,d$. We will give a simple congruence condition on $a,b,c,d$ that will ensure that the group of
units is indeed generated by the above,
and furthermore, in \S\ref{many_examples} we will show how to construct
infinitely many tuples $(a,b,c,d)$ which satisfy our congruence conditions.\\

Given $a,b,c,d\in\bZ$ which satisfy some mild conditions, we classify the family of polynomials $f(x)\in\bZ[x]$ having a root $\theta$ such that
$a\theta-b, c\theta-d$ are units in the ring $\bZ[\theta]$.

\begin{lemma}\label{lem:unit_condition}
Let $f\left(x\right)\in\bZ\left[x\right]$ be a monic, cubic irreducible polynomial with root $\theta$.
Then for $a,b\in \bZ,\; a\neq 0$ we have that $N(a\theta-b)=-a^3f(\frac{b}{a})$. In particular,
$a\theta-b$ is a unit in $\bZ\left[\theta\right]$ if
and only if $a^3f(\frac{b}{a})=\pm1$. Additionally, if this is the case we
must have that $\gcd\left(a,b\right)=1$.\end{lemma}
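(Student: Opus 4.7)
The plan is to compute $N(a\theta - b)$ directly from its expression as a product of Galois conjugates, and then read off both the unit condition and the coprimality statement as immediate consequences.

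First I would write $f(x) = \prod_{i=1}^3 (x - \theta_i)$ where $\theta = \theta_1, \theta_2, \theta_3$ are the roots of $f$ in its splitting field. The norm then factors as
\[
N(a\theta - b) = \prod_{i=1}^3 (a\theta_i - b) = a^3 \prod_{i=1}^3 \left(\theta_i - \tfrac{b}{a}\right).
\]
Substituting $x = b/a$ in the factored form of $f$ gives $f(b/a) = \prod_i (b/a - \theta_i) = (-1)^3 \prod_i (\theta_i - b/a)$, so the product above equals $-f(b/a)$, and we conclude $N(a\theta - b) = -a^3 f(b/a)$.

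For the unit condition, note that $a\theta - b\in\bZ[\theta]$ is an algebraic integer, and the standard fact that an element of any order is a unit in that order iff its norm is $\pm 1$ (its inverse being expressible as an integer polynomial in itself via its minimal polynomial) combines with the displayed formula to give the equivalence $a^3 f(b/a) = \pm 1$. For the coprimality, let $d = \gcd(a,b)$ and write $a = da'$, $b = db'$ so that $a\theta - b = d(a'\theta - b')$. Multiplicativity of the norm yields $\pm 1 = N(a\theta - b) = d^3\, N(a'\theta - b')$, and since $N(a'\theta - b')\in\bZ$ this forces $d^3 \mid 1$, hence $d = 1$.

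I do not anticipate any real obstacle: the whole argument is a direct computation, the only point requiring minor care being the sign bookkeeping in passing from $\prod_i(\theta_i - b/a)$ to $f(b/a)$ via the factor $(-1)^3 = -1$.
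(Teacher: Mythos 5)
Your proof is correct and follows essentially the same route as the paper's: you compute $N(a\theta-b)$ via the product of the conjugates $\prod_i(a\theta_i-b)=a^3\prod_i(\theta_i-b/a)=-a^3f(b/a)$, whereas the paper reads the norm off as minus the constant term of the monic minimal polynomial $a^3f\bigl(\tfrac{y+b}{a}\bigr)$ of $a\theta-b$; these are the same computation presented in two equivalent ways. The unit criterion and the coprimality argument ($\pm1 = d^3 N(a'\theta-b')$ forcing $d=1$) are identical to the paper's.
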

\begin{proof}
We recall that $a\theta-b$ is a unit if and only if $N(a\theta-b)=\pm1$
and this norm is minus the free coefficient of the monic
minimal polynomial of $a\theta-b$ which is $a^{3}f\left(\frac{x+b}{a}\right)$.
It follows that $a\theta-b$ is a unit if and only if $\pm1=-N(a\theta-b)=a^{3}f\left(\frac{b}{a}\right)$.

A necessary condition is that $\left(a,b\right)=1$. Indeed, if $a=a'd$
and $b=b'd$ with $d>1$, then
\[
N(a\theta-b)=N(d\cdot\left(a'\theta-b'\right))=d^{3}N(a'\theta-b')\neq\pm1,
\]
and therefore $a\theta-b$ is not a unit.
\end{proof}
\begin{lemma}\label{lem:general form}
Let $a,b,c,d\in\bZ$, $\eps_1,\eps_2\in\set{\pm 1}$ be given and assume that $ad-bc\ne 0$ and that $\gcd(a,b)=\gcd(c,d)=1$.
Let
$$\cF=\cF_{a,b,c,d,\eps_1,\eps_2}=\set{h\in\bZ\left[x\right]:\begin{array}{lll}
\textrm{{\small
$h$ is a monic irreducible polynomial with root}}\\
\textrm{{\small
 $\theta$ such that both $a\theta-b$ and $c\theta-d$ are units}}\\
 \textrm{{\small
   in $\bZ[\theta]$ with norms $\eps_1,\eps_2$ correspondingly}}
 \end{array}}.$$
Then, if $h\in\cF$ we have $\cF \subseteq \set{h_{t}(x)=h(x)+t\left(ax-b\right)\left(cx-d\right): t\in\bZ}$.
\end{lemma}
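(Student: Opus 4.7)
The plan is to compare two polynomials in $\cF$ by taking their difference and showing it must be an integer multiple of $(ax-b)(cx-d)$. Fix $h_1,h_2\in\cF$ and set $g(x)=h_2(x)-h_1(x)\in\bZ[x]$. Since both $h_i$ are monic cubics, their leading terms cancel, so $\deg g\le 2$. By Lemma~\ref{lem:unit_condition}, the value $a^3 h(b/a)$ is determined by the requirement that $a\theta-b$ be a unit of norm $\eps_1$ in $\bZ[\theta]$, where $\theta$ is a root of $h$; hence this value is the same for $h=h_1$ and $h=h_2$. It follows that $a^3 g(b/a)=0$, and since $a\ne 0$ (else $\gcd(a,b)=1$ would force $b=\pm 1$ and the second hypothesis $\gcd(c,d)=1$ combined with $ad-bc\ne 0$ can be handled symmetrically), we conclude $g(b/a)=0$. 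The same argument with $c,d,\eps_2$ gives $g(d/c)=0$.

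Next I would use the assumption $ad-bc\ne 0$, which forces $b/a\ne d/c$, to observe that $g$ is a polynomial of degree at most $2$ with two distinct rational roots $b/a$ and $d/c$. Hence either $g\equiv 0$, in which case $h_2=h_1$ corresponds to $t=0$, or $g$ has degree exactly $2$ and factors in $\bQ[x]$ as $g(x)=\mu(ax-b)(cx-d)$ for some $\mu\in\bQ^{\times}$.

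To conclude, I need to upgrade $\mu\in\bQ$ to $\mu\in\bZ$. The key lemma is that the polynomial $q(x)=(ax-b)(cx-d)=ac\,x^2-(ad+bc)x+bd$ is primitive, i.e.\ $\gcd(ac,ad+bc,bd)=1$. I would verify this prime-by-prime: any prime $p$ dividing $ac$ divides one of $a,c$; combining with $\gcd(a,b)=\gcd(c,d)=1$ and the requirement that $p$ also divide $bd$ and $ad+bc$, each case quickly runs into a contradiction with one of the two coprimality assumptions. Once primitivity of $q$ is established, Gauss's lemma (applied to $g=\mu q\in\bZ[x]$) forces $\mu\in\bZ$, giving $h_2(x)=h_1(x)+\mu(ax-b)(cx-d)$, which is exactly the claimed form.

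The only mildly delicate step is the primitivity check for $q$; the rest is clean bookkeeping. A minor sanity point I would also verify along the way is that taking $h_t(x)=h(x)+t(ax-b)(cx-d)$ preserves the conditions $a^3 h_t(b/a)=a^3 h(b/a)$ and $c^3 h_t(d/c)=c^3 h(d/c)$ (trivially, since $(ax-b)(cx-d)$ vanishes at $b/a$ and $d/c$), which shows why the family on the right is precisely the natural candidate and why the containment cannot be strengthened to an equality without additional irreducibility hypotheses.
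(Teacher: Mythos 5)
Your proposal is correct and follows essentially the same route as the paper's proof: take the difference $g=f-h$, note it has degree at most $2$ and vanishes at the distinct points $b/a$ and $d/c$ (by Lemma~\ref{lem:unit_condition}, since both polynomials have the same prescribed values $a^3 f(b/a)=-\eps_1$ and $c^3 f(d/c)=-\eps_2$ there), and then use primitivity of $(ax-b)(cx-d)$ to force the scalar to be an integer. You spell out the two points the paper leaves implicit — the degenerate case $g\equiv 0$, and the prime-by-prime verification that $\gcd(ac,\,ad+bc,\,bd)=1$ together with the content/Gauss argument — so your write-up is a more complete version of the same proof rather than a different one.

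One small remark: your parenthetical justification that $a\ne 0$ is somewhat garbled and doesn't really resolve the degenerate case $a=0$ (where $\gcd(a,b)=1$ forces $b=\pm1$, making $a\theta-b$ trivially a unit and the polynomial $(ax-b)(cx-d)$ only linear, so the argument would give a weaker conclusion). The paper silently assumes $a,c\ne 0$ here as well — this hypothesis is stated explicitly only a few lines later in the surrounding discussion — so you are not introducing a gap that the paper avoids; just flag the standing hypothesis cleanly rather than trying to derive it.
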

\begin{proof}
If $f\in\cF$,
then $f(x)-h(x)$ is a degree 2 polynomial (since both are monic). Also,
from Lemma~\ref{lem:unit_condition} we conclude that
$\frac{b}{a}$, $\frac{d}{c}$ (which are distinct due to the hypothesis $ad-bc\ne 0$), are roots of $f-h$.
Then any such integer quadratic polynomials must be of the form $t\left(ax-b\right)\left(cx-d\right)$ for
some $t\in\bZ$, because of the primitivity assumption $\gcd(a,b)=\gcd(c,d)=1$. This establishes the inclusion
$\cF\subseteq \set{h_t(x):t\in\bZ}$.
\end{proof}
\begin{remark}
As the lemma above shows, the set $\{h_t(x)\; : \; t\in \bZ\}$ is exactly the set of cubic monic polynomials satisfying the conditions $a^3f(\frac{b}{a})=\pm1$ and $c^3f(\frac{d}{c})=\pm1$ appearing in Lemma~\ref{lem:unit_condition}. It is not true that any such polynomial is irreducible. For example $f(x)=x^2(x-2)+1$ satisfies $1^3f(\frac{0}{1})=f(0)=1$ and $1^3f(\frac{2}{1})=f(2)=1$, and it is not irreducible since it has a root $f(1)=0$. In Theorem~\ref{approx_roots} we shall see that these $h_t(x)$ are irreducible for all but finitely many $t$, and then Lemma~\ref{lem:unit_condition} will imply that the inclusion in the lemma above is cofinite.
\end{remark}
We note that at this point it is not obvious why $\cF\ne\varnothing$. Below we show that under suitable conditions on the parameters
$a,b,c,d$, this is indeed the case and moreover, our conditions will imply that the units $a\theta-b,c\theta-d$ generate (together with $-1$)
the group of units in $\bZ[\theta]$.

Recall that by Dirichlet's theorem, in a totally real cubic field, the unit group modulo its torsion part has rank $2$, and that two units are called a fundamental system if they generate the unit group modulo its torsion. If $a=0$, then $0\cdot\theta+b=b$ is a unit if and only if $b=\pm1$,
but of course it will not be a part of a fundamental system of units,
and therefore we must have that $a\neq0$, and similarly $c\neq 0$. On the other hand, if
$b=0$, then $a\theta$ can be a unit only when $a=\pm1$, namely
$\theta$ is a unit. If $d$ is also zero, then we get two units in
$\left\{ \pm\theta\right\} $ which cannot be a fundamental system.
We therefore assume in our discussion $a,c\neq0$ and at least one of
$b,d$ is nonzero.
\subsection{The case where $b=0$ or $d=0$}\label{zero_condition}
We analyze the case $d=0$, i.e.\ $a\theta-b, \theta$ form a fundamental set of units and the case $b=0$ is symmetric. We prove the following.
\begin{theorem}
\label{thm:one_unit}Let $\epsilon_{1},\epsilon_{2}\in\left\{ \pm1\right\} $
and $a,b\in\bZ\backslash\left\{ 0\right\} $ such that $\gcd\left(a,b\right)=1$.
There exists a monic polynomial $f(x)\in\bZ\left[x\right]$ such that
$a^{3}f\left(\frac{b}{a}\right)=\epsilon_{1}$ and $f\left(0\right)=\epsilon_{2}$
if and only if $a^{3}\equiv_{b}\epsilon_{1}\epsilon_{2}$ and $b^{3}\equiv_{a}\epsilon_{1}$.
In this case, there are infinitely many polynomials that satisfy this
condition and they have the form\footnote{Although this polynomial is supposed to be denoted by $f_{a,b,1,0,t}$
we omit the fixed parameters from the subscript to ease the notation.}
\begin{align}\label{eq:ht}
f_{a,b,t}(x)&=\\
\nonumber &\pa{x^{3}+\frac{\epsilon_{1}(a^{3}-\epsilon_{1}\epsilon_{2})^{2}-b^{3}}{ab^{2}}x^{2}-\epsilon_{1}a(\frac{a^{3}-\epsilon_{1}\epsilon_{2}}{b})x
+\epsilon_{2}}+t\cdot x(ax-b),
\end{align}
where $t\in\bZ$.
In particular, if $f$ is irreducible and $\theta:=\theta_{a,b,t}$ is a root of $f_{a,b,t}$,
then $\theta,a\theta-b$ are units in $\bZ\left[\theta\right]$.
\end{theorem}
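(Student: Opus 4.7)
The plan is to translate the two normalization conditions into a single linear Diophantine equation in the coefficients of $f$ and then invoke Bezout's lemma. Writing a general monic cubic as $f(x) = x^{3} + A x^{2} + B x + C$ with $A, B, C \in \bZ$, the condition $f(0) = \epsilon_{2}$ forces $C = \epsilon_{2}$, and expanding $a^{3} f(b/a) = \epsilon_{1}$ rearranges to
\begin{equation*}
ab(Ab + Ba) = \epsilon_{1} - \epsilon_{2} a^{3} - b^{3}.
\end{equation*}

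For necessity, the left-hand side is divisible by $ab$; since $\gcd(a,b) = 1$ this amounts to divisibility of the right-hand side by $a$ and by $b$ separately. Reducing modulo $a$ gives $b^{3} \equiv \epsilon_{1} \pmod{a}$, and reducing modulo $b$, after using $\epsilon_{2}^{2} = 1$, gives $a^{3} \equiv \epsilon_{1}\epsilon_{2} \pmod{b}$. For sufficiency, under both congruences the quantity $m := (\epsilon_{1} - \epsilon_{2} a^{3} - b^{3})/(ab)$ is an integer, and Bezout produces $A, B \in \bZ$ with $Ab + Ba = m$. This yields at least one valid polynomial in $\bZ[x]$.

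To promote one valid polynomial to the infinite family parametrized by $t$, I would mimic the argument of Lemma \ref{lem:general form}: any two valid monic cubic polynomials $f_{1}, f_{2}$ differ by a polynomial of degree at most two vanishing at $0$ and at $b/a$, and primitivity $\gcd(a,b) = 1$ forces this difference to be of the form $t \cdot x(ax - b)$ with $t \in \bZ$; conversely, adding any such term preserves both conditions, since $x(ax - b)$ vanishes at $0$ and at $b/a$. Verifying that the specific $f_{a,b,0}(x)$ displayed in \eqref{eq:ht} is a valid starting point is then a direct computation: integrality of its $x$-coefficient is equivalent to $b \mid a^{3} - \epsilon_{1}\epsilon_{2}$, and integrality of its $x^{2}$-coefficient reduces, after noting $b^{2} \mid (a^{3} - \epsilon_{1}\epsilon_{2})^{2}$, to $a \mid \epsilon_{1} - b^{3}$; the identity $a^{3} f_{a,b,0}(b/a) = \epsilon_{1}$ then follows after one factoring step (pulling $(a^{3}-\epsilon_{1}\epsilon_{2})$ out of the two middle contributions). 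Finally, when $f_{a,b,t}$ is irreducible with root $\theta$, Lemma \ref{lem:unit_condition} immediately gives $N(a\theta - b) = -\epsilon_{1} = \pm 1$ and $N(\theta) = -f_{a,b,t}(0) = -\epsilon_{2} = \pm 1$, so both are units. The only genuine obstacle I foresee is sign-bookkeeping with $\epsilon_{1}$ and $\epsilon_{2}$ through the explicit formula \eqref{eq:ht}; the remainder is elementary.
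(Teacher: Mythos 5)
Your proof is correct, and while the necessity direction (translating the two normalization conditions into the linear equation $ab(Ab+Ba)=\epsilon_1-\epsilon_2 a^3-b^3$ and reading off the two congruences) is essentially identical to the paper's, your sufficiency direction takes a genuinely different route. The paper works forward: it parametrizes the rational solution line as $(A,B)=\bigl(\tfrac{\epsilon_1-b^3-\epsilon_2 a^3}{ab^2},0\bigr)+s\bigl(\tfrac{a}{b},-1\bigr)$, determines for which integer $s$ the coordinate $A$ is integral (a congruence mod $b$), substitutes $s=\epsilon_1 a\tfrac{a^3-\epsilon_1\epsilon_2}{b}+bt$, and thereby \emph{derives} the displayed formula \eqref{eq:ht}. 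You instead argue in three steps: (i) abstract existence of one integer solution via Bezout (since $\gcd(a,b)=1$ and the target $m=\tfrac{\epsilon_1-\epsilon_2a^3-b^3}{ab}$ is an integer under the congruences); (ii) structure of the full solution set by mimicking Lemma~\ref{lem:general form} --- any two admissible monic cubics differ by a degree-$\le 2$ polynomial vanishing at $0$ and $b/a$, forced by primitivity to be $t\cdot x(ax-b)$; (iii) direct verification that $f_{a,b,0}$ in \eqref{eq:ht} has integer coefficients (each of the two integrality checks reduces, via $\gcd(a,b)=1$ and $b\mid a^3-\epsilon_1\epsilon_2$, to one of the two congruences) and satisfies $a^3f_{a,b,0}(b/a)=\epsilon_1$ after the factoring $\epsilon_1(a^3-\epsilon_1\epsilon_2)^2-\epsilon_1 a^3(a^3-\epsilon_1\epsilon_2)=-\epsilon_2(a^3-\epsilon_1\epsilon_2)$. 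Your route trades the paper's congruence bookkeeping for a Bezout existence argument plus a verification of the stated formula; it is cleaner modulo the fact that it verifies rather than derives the explicit $f_{a,b,0}$, so someone producing the theorem from scratch would still need the paper's (or an equivalent) computation to guess that polynomial. Both arguments are sound and of comparable length.
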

\begin{proof}
The last sentence in the statement of the theorem follows directly from Lemma~\ref{lem:unit_condition}. 
Write $f(x)=x^{3}+Ax^{2}+Bx+C$ with $A,B,C\in\bZ$. The equation
$f(0)=\epsilon_{2}$ implies that $C=\epsilon_{2}$. The equation
$a^{3}f\left(\frac{b}{a}\right)=\epsilon_{1}$ translates to
\begin{eqnarray*}
b^{3}+Ab^{2}a+Ba^{2}b+\epsilon_{2}a^{3} & = & \epsilon_{1}\\
\left(A,B\right)\left(\begin{array}{c}
b\\
a
\end{array}\right) & = & \frac{\epsilon_{1}-b^{3}-\epsilon_{2}a^{3}}{ab}
\end{eqnarray*}
and hence $a,b\mid\epsilon_{1}-b^{3}-\epsilon_{2}a^{3}$ namely $a^{3}\equiv_{b}\epsilon_{1}\epsilon_{2}$
and $b^{3}\equiv_{a}\epsilon_{1}$. Assume now these congruence conditions
hold. The general solution to the equation above will be
\[
\left(A,B\right)=\left(\frac{\epsilon_{1}-b^{3}-\epsilon_{2}a^{3}}{ab^{2}},0\right)+s\left(\frac{a}{b},-1\right)=\left(\frac{\epsilon_{1}-b^{3}-\epsilon_{2}a^{3}+sa^{2}b}{ab^{2}},-s\right),
\]
and we need to show that we can choose an integer $s$ such that $A=\frac{\epsilon_{1}-b^{3}-\epsilon_{2}a^{3}+sa^{2}b}{ab^{2}}$
will be an integer. Since $\epsilon_{1}-b^{3}-\epsilon_{2}a^{3}+sa^{2}b\equiv_{a}\epsilon_{1}-b^{3}\equiv_{a}0$
and $\gcd\left(a,b^{2}\right)=1$, it is enough to show that this expression
is divisble by $b^{2}$. Because $\gcd\left(a^{2},b\right)=1$, we
can always find $s$ such that
\begin{align*}
sa^{2}\equiv_{b}\epsilon_{2}\left(\frac{a^{3}-\epsilon_{1}\epsilon_{2}}{b}\right)
&\iff\\
-sa^{2}b\equiv_{b^{2}}\epsilon_{2}\left(\epsilon_{1}\epsilon_{2}-a^{3}\right)
&\iff
-sa^{2}b\equiv_{b^{2}}\epsilon_{1}-b^{3}-\epsilon_{2}a^{3}
\end{align*}
Multiplying the first expresion by $a\epsilon_{1}\epsilon_{2}$ yields
$s\equiv_{b}\epsilon_{1}a\left(\frac{a^{3}-\epsilon_{1}\epsilon_{2}}{b}\right)$
so that $s=\epsilon_{1}a\left(\frac{a^{3}-\epsilon_{1}\epsilon_{2}}{b}\right)+bt$
for $t\in\bZ$. To complete the proof we note that
\begin{eqnarray*}
\left(A,B\right) & = & \left(\frac{\epsilon_{1}-b^{3}-\epsilon_{2}a^{3}+\epsilon_{1}\left(a^{3}-\epsilon_{1}\epsilon_{2}\right)a^{3}}{ab^{2}},-\epsilon_{1}a\left(\frac{a^{3}-\epsilon_{1}\epsilon_{2}}{b}\right)\right)+t\left(a,-b\right)\\
 & = & \left(\frac{-b^{3}+\epsilon_{1}\left(a^{3}-\epsilon_{1}\epsilon_{2}\right)^{2}}{ab^{2}},-\epsilon_{1}a\left(\frac{a^{3}-\epsilon_{1}\epsilon_{2}}{b}\right)\right)+t\left(a,-b\right).
\end{eqnarray*}
\end{proof}
\begin{remark}
Let $R=\left\{ \left(a,b\right)\in\bZ^{2}\;\mid\;\exists\epsilon_{1},\epsilon_{2}\in\left\{ \pm1\right\} ,\;a^{3}\equiv_{b}\epsilon_{1}\epsilon_{2},\;b^{3}\equiv_{a}\epsilon_{1}\right\} $.
Then clearly $R$ is symmetric (namely $\left(a,b\right)\in R\iff\left(b,a\right)\in R$)
and it is closed under multiplication by $(-1)$ for each coordinate
(namely $\left(a,b\right)\in R\iff\left(-a,b\right)\in R\iff\left(a,-b\right)\in R$).
In particular, up to these relations, we can look for examples such
that $\epsilon_{1},\epsilon_{2}=1$, i.e. $(a,b)$ is a mutually cubic root pair, and $\left|a\right|\leq\left|b\right|$. 
\end{remark}
\begin{example}
\label{theta_is_a_unit}
In this example we work out a simple recipe and show how to construct
an infinite family of mutually cubic root pairs $(a,b)$, and in particular,
to which Theorem~\ref{thm:one_unit} applies with $\epsilon_{1}=\epsilon_{2}=1$.
\begin{itemize}
\item The pairs $(a,1),(1,b)$ are always mutually cubic root pairs. In these cases the
polynomials are
\begin{eqnarray*}
f_{1,b,t}\left(x\right) & = & x^{3}-bx^{2}+\epsilon+tx\left(x-b\right)=x\left(x+t\right)\left(x-b\right)+1\\
f_{a,1,t}\left(x\right) & = & x^{3}+\left[\left(a\left(a^{3}-1\right)-a+t\right)x-1\right]\left[ax-1\right]\\
 & = & x^{3}+\left[sx-1\right]\left[ax-1\right]\quad;\quad s=a\left(a^{3}-1\right)-a+t
\end{eqnarray*}

\item Given any $b$, in order to solve $0\equiv_{a}b^{3}-1=\left(b-1\right)\left(b^{2}+b+1\right)$
we can choose $a=b^{2}+b+1$. The second
equation is satisfied automatically because $a^{3}=\left(b^{2}+b+1\right)^{3}\equiv_{b}1^{3}\equiv1$.
\item Similarly, given any $b$, we may take $a=1-b$ and get that the two congruences are satisfied.
\item Another option is to fix some integer $r$ and set $a=r^{2}$ and so
we have $a^{3}-1=\left(r^{3}\right)^{2}-1=\left(r^{3}-1\right)\left(r^{3}+1\right)$.
Thus, on choosing $b=r^{3}\pm1$ we get that $b^{3}\equiv_{a}\pm1$ and the other congruence condition follows as well.
\end{itemize}
\end{example}
We shall see in  \S\ref{many_examples} how to construct many
more examples.
\subsection{The case where both $b,d$ are non-zero}\label{nonzero_condition}

We claim that if we wish $a\theta-b,c\theta-d$ to be independent units in $\bZ[\theta]$ we need to assume $ad-bc\neq0$. 
Otherwise we would have that $c\theta-d=\frac{ad}{b}\theta-d=\frac{d}{b}\left(a\theta-b\right)$,
and because $N(a\theta-b),N\left(c\theta-d\right)=\pm1$ we would
get that $\frac{d}{b}=\pm1$. It follows that $a\theta-b=\pm\left(c\theta-d\right)$.

To make life easier, we will assume further that $ad-bc=\epsilon\in\left\{ \pm1\right\} $. We prove the following analogue of Theorem~\ref{thm:one_unit}.
\begin{theorem}
\label{theta_not_unit}
Let $\epsilon_{1},\epsilon_{2}\in\left\{ \pm1\right\} $
and $a,b,c,d\in\bZ\backslash\left\{ 0\right\} $ such that $ad-bc=\epsilon=\pm1$.
Then there exists a monic polynomial $f\left(x\right)\in\bZ\left[x\right]$
such that $a^{3}f\left(\frac{b}{a}\right)=\epsilon_{1}$ and $c^{3}f\left(\frac{d}{c}\right)=\epsilon_{2}$
if and only if $b^{3}\equiv_{a}\epsilon_{1}$ and $d^{3}\equiv_{c}\epsilon_{2}$.

In this case there are infinitely many polynomials that satisfy this
condition and they have the form $f_{a,b,c,d,t}\left(x\right)=x^{3}+Px^{2}+Qx+R$
with $t\in\bZ$ where
\begin{eqnarray*}
R & = & \epsilon\epsilon_{1}d^{3}-\epsilon\epsilon_{2}b^{3}+tbd\\
\left(P,Q\right) & = & \epsilon\left(\frac{\epsilon_{1}-b^{3}-Ra^{3}}{ab},\frac{\epsilon_{2}-d^{3}-Rc^{3}}{cd}\right)\left(\begin{array}{cc}
c & -d\\
-a & b
\end{array}\right).
\end{eqnarray*}
In particular, if $f_{a,b,c,d,t}(x)$ is irreducible and $\theta:=\theta_{a,b,c,d,t}$ is its root,
then $a\theta-b,c\theta-d$ are units in $\bZ\left[\theta\right]$.\end{theorem}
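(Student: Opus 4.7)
The plan is to mirror Theorem~\ref{thm:one_unit}. Writing $f(x)=x^{3}+Px^{2}+Qx+R$ with $P,Q,R\in\bZ$ unknown, the two norm conditions $a^{3}f(b/a)=\epsilon_{1}$ and $c^{3}f(d/c)=\epsilon_{2}$ unwind to
\begin{align*}
ab(bP+aQ) & = \epsilon_{1}-b^{3}-Ra^{3},\\
cd(dP+cQ) & = \epsilon_{2}-d^{3}-Rc^{3}.
\end{align*}
Before attacking the system I would record the elementary consequences of $ad-bc=\epsilon=\pm1$: one has $\gcd(a,b)=\gcd(c,d)=\gcd(b,d)=1$, and the congruences $ad\equiv\epsilon\pmod{b}$ and $bc\equiv-\epsilon\pmod{d}$ give the modular inverses $a^{-1}\equiv\epsilon d\pmod{b}$ and $c^{-1}\equiv-\epsilon b\pmod{d}$. \emph{Necessity} of the arithmetic conditions is then immediate by reducing the first display modulo $a$ and the second modulo $c$.

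For \emph{sufficiency} I would proceed in two stages. First, choose $R$ so that both right-hand sides become divisible by $ab$ and $cd$ respectively. Using $\gcd(a,b)=1$ and the necessary congruence $b^{3}\equiv\epsilon_{1}\pmod{a}$, the first divisibility reduces to $Ra^{3}\equiv\epsilon_{1}\pmod{b}$, which via the inverse computation above rewrites as $R\equiv\epsilon\epsilon_{1}d^{3}\pmod{b}$; symmetrically the second reduces to $R\equiv-\epsilon\epsilon_{2}b^{3}\pmod{d}$. Since $\gcd(b,d)=1$, the Chinese Remainder Theorem produces a simultaneous solution, and a direct substitution confirms that $R_{0}=\epsilon\epsilon_{1}d^{3}-\epsilon\epsilon_{2}b^{3}$ already works; the full family is $R=R_{0}+tbd$ with $t\in\bZ$, matching the statement.

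With such $R$ fixed, set $u_{1}=(\epsilon_{1}-b^{3}-Ra^{3})/(ab)$ and $u_{2}=(\epsilon_{2}-d^{3}-Rc^{3})/(cd)$, both in $\bZ$ by construction. The remaining system
\[
\begin{pmatrix}b & a\\ d & c\end{pmatrix}\begin{pmatrix}P\\ Q\end{pmatrix}=\begin{pmatrix}u_{1}\\ u_{2}\end{pmatrix}
\]
has coefficient matrix of determinant $bc-ad=-\epsilon=\pm1$, hence lies in $\GL_{2}(\bZ)$. Inverting at once yields the stated closed form for $(P,Q)$ and, crucially, forces $P,Q\in\bZ$ with no further divisibility argument. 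The final ``in particular'' sentence is immediate from Lemma~\ref{lem:unit_condition} applied to $a\theta-b$ and $c\theta-d$. I do not foresee a genuine obstacle: the coincidence $\det=\pm1$ trivialises the otherwise awkward integrality issue, and the only thing to watch is the bookkeeping of signs in deriving the explicit $R_{0}$ from the two CRT congruences.
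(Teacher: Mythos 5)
Your proof is correct and follows essentially the same route as the paper's: extract the two linear conditions on $P,Q$ in terms of $R$, reduce the divisibility by $ab$ (resp.\ $cd$) to mod-$a$ and mod-$b$ (resp.\ mod-$c$, mod-$d$) congruences using coprimality, pin down $R$ modulo $bd$ via CRT, then invert the determinant-$\pm1$ matrix to get $P,Q\in\bZ$. Your version is if anything slightly cleaner, since you compute the two congruences $R\equiv\epsilon\epsilon_1 d^3\pmod b$ and $R\equiv-\epsilon\epsilon_2 b^3\pmod d$ explicitly via the inverses $a^{-1}\equiv\epsilon d\pmod b$, $c^{-1}\equiv-\epsilon b\pmod d$, rather than first asserting abstract solutions $A,C$ and only later producing the closed form, as the paper does.

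One small caution about the phrase ``inverting at once yields the stated closed form'': since $\det\begin{pmatrix}b&d\\a&c\end{pmatrix}=bc-ad=-\epsilon$, the inverse carries a factor $-\epsilon$, not $\epsilon$, so the correct formula is $(P,Q)=-\epsilon\,(u_1,u_2)\begin{pmatrix}c&-d\\-a&b\end{pmatrix}$. A quick check with $(a,b,c,d)=(2,1,1,1)$, $\epsilon_1=\epsilon_2=1$, $t=1$ confirms this: the paper's sign gives $f(x)=x^3-2x^2+3x+1$ with $8f(1/2)=17$, while the corrected sign gives $f(x)=x^3+2x^2-3x+1$ with $8f(1/2)=1$ and $f(1)=1$ as required. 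This is a typo in the theorem's displayed formula rather than a flaw in your argument; the mathematical content of your step is fine, but you should not take the printed formula for $(P,Q)$ at face value.
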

\begin{remark}
Note that although it is not apparent by the formula above, due to Lemma~\ref{lem:general form}, the cubic polynomials
arising in the above theorem are all of the form $h_0(x)+tg(x)$, where
$g=(ax-b)(cx-d)$.
\end{remark}
\begin{proof}
The last sentence in the statement of the theorem follows directly from Lemma~\ref{lem:unit_condition}.
We note first that $ad-bc\in\left\{ \pm1\right\} $ implies that $\gcd\left(a,b\right)=\gcd\left(a,c\right)=\gcd\left(d,b\right)=\gcd\left(d,c\right)=1$.

Writing $f(x)=x^{3}+Px^{2}+Qx+R$, we need to satisfy the equations
\begin{eqnarray*}
\epsilon_{1} & = & b^{3}+Pb^{2}a+Qba^{2}+Ra^{3}\quad\iff\quad ab\left(Pb+Qa\right)=\epsilon_{1}-b^{3}-Ra^{3}\\
\epsilon_{2} & = & d^{3}+Pd^{2}c+Qdc^{2}+Rc^{3}\quad\iff\quad cd\left(Pd+Qc\right)=\epsilon_{2}-d^{3}-Rc^{3}
\end{eqnarray*}
We conclude that $ab\mid\epsilon_{1}-b^{3}-Ra^{3}$, and since $a,b$
are coprime, this condition is equivalent to $b^{3}\equiv_{a}\epsilon_{1}$
and $Ra^{3}\equiv_{b}\epsilon_{1}$. Similarly we get that $d^{3}\equiv_{c}\epsilon_{2}$
and $Rc^{3}\equiv_{d}\epsilon_{2}$, thus proving the first direction
of the theorem.

Assume now that $b^{3}\equiv_{a}\epsilon_{1}$ and $d^{3}\equiv_{c}\epsilon_{2}$.
Since $\left(a,b\right)=\left(c,d\right)=1$, there are solutions
to $Aa^{3}\equiv_{b}\epsilon_{1}$ and $Cc^{3}\equiv_{d}\epsilon_{2}$.
Using $\left(b,d\right)=1$ and the Chinese remainder theorem, we
conclude that there is a solution $R\equiv_{b}A$ and $R\equiv_{d}C$
so that $Ra^{3}\equiv_{b}\epsilon_{1}$ and $Rc^{3}\equiv_{d}\epsilon_{2}$
and it is unique modulo $bd$. Once we have such an $R$ we get that
\begin{eqnarray*}
\left(P,Q\right)\left(\begin{array}{cc}
b & d\\
a & c
\end{array}\right) & = & \left(\frac{\epsilon_{1}-b^{3}-Ra^{3}}{ab},\frac{\epsilon_{2}-d^{3}-Rc^{3}}{cd}\right)\\
\left(P,Q\right) & = & \epsilon\left(\frac{\epsilon_{1}-b^{3}-Ra^{3}}{ab},\frac{\epsilon_{2}-d^{3}-Rc^{3}}{cd}\right)\left(\begin{array}{cc}
c & -d\\
-a & b
\end{array}\right)
\end{eqnarray*}
so that $P,Q$ are also integers, thus completing the first part of
the theorem.

Assume now that we have a solution to the above equations. By our
assumption, we have that
\begin{eqnarray*}
R & \equiv_{b} & R\left(bc+\epsilon\right)^{3}\epsilon=R\left(ad\right)^{3}\epsilon\equiv_{b}d^{3}\epsilon_{1}\epsilon\\
R & \equiv_{d} & R\left(ad-\epsilon\right)^{3}\left(-\epsilon\right)=R\left(bc\right)^{3}\left(-\epsilon\right)\equiv_{d}-b^{3}\epsilon_{2}\epsilon
\end{eqnarray*}
 Since $\left(b,d\right)=1$, the Chinese remainder theorem implies
that all the solutions have the form $R=\epsilon\epsilon_{1}d^{3}-\epsilon\epsilon_{2}b^{3}+tbd$
which completes the proof.\end{proof}
\begin{remark}
As in the previous case, given a suitable $a,b,c,d\in\bZ\backslash\left\{ 0\right\} $
in the theorem, we may multiply $a,b$ by $-1$ to get another pair
(which corresponds to the unit $-\left(a\theta-b\right)$). Thus,
we may assume that $\epsilon_{1}=1$ and similarly for $\epsilon_{2}=1$.
Furthermore, by switching the pairs $\left(a,b\right)$ with $\left(c,d\right)$,
we may assume that $ad-bc=1$. \end{remark}
\begin{example}
We choose $(a,b)$ as in the second bullet of Example~\ref{theta_is_a_unit}, i.e.\ $\left(a,b\right):=\left(b^{2}+b+1,b\right)$.
To find $(c,d)$ which solve the equation $ad-bc=1$
we choose for example $\left(c,d\right)=\left(b+1,1\right)$ and note that $d^{3}\equiv_{c}1^{3}=1$
so that the conditions of the theorem are satisfied.
\end{example}
The next lemma generalizes the example above and using the results
from \S\ref{many_examples} it produces infinitely many examples
of suitable tuples $a,b,c,d$ for the theorem above.
\begin{lemma}
Let $\left(a,c\right)$ be a pair such that $a^{3}\equiv_{c}1$ and
$c^{3}\equiv_{a}-1$. Then $\gcd\left(a,c\right)=1$ and the integers
$b,d$ such that $ad-bc=1$ satisfy $b^{3}\equiv_{a}1$ and $d^{3}\equiv_{c}1$.\end{lemma}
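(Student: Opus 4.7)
The plan is a short direct computation using only the congruences $a^3\equiv 1\pmod c$, $c^3\equiv -1\pmod a$, and the Bezout-type relation $ad-bc=1$.

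First I would verify that $\gcd(a,c)=1$. If a prime $p$ divides both $a$ and $c$, then from $a^3\equiv 1\pmod c$ we get $p\mid 1$, a contradiction. (Note also that the very existence of integers $b,d$ with $ad-bc=1$ already forces $\gcd(a,c)=1$, so this step is essentially automatic.)

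Next, from $ad-bc=1$ I would read off the two reductions $bc\equiv -1\pmod a$ and $ad\equiv 1\pmod c$. Cubing the first gives $b^3c^3\equiv -1\pmod a$, and since by hypothesis $c^3\equiv -1\pmod a$, this becomes $-b^3\equiv -1\pmod a$, i.e. $b^3\equiv 1\pmod a$. Cubing the second gives $a^3d^3\equiv 1\pmod c$, and since by hypothesis $a^3\equiv 1\pmod c$, this becomes $d^3\equiv 1\pmod c$.

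There is no real obstacle here; the whole argument is a one-line manipulation per congruence, and the mild asymmetry between the two hypotheses ($+1$ modulo $c$ versus $-1$ modulo $a$) is exactly what is needed so that after cubing the Bezout relations the signs cancel correctly to produce the symmetric conclusion $b^3\equiv 1\pmod a$ and $d^3\equiv 1\pmod c$.
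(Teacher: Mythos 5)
Your proof is correct and follows essentially the same route as the paper's: both exploit the reductions of $ad-bc=1$ modulo $a$ and modulo $c$, cube them, and cancel the hypotheses $c^3\equiv_a -1$ and $a^3\equiv_c 1$ to obtain the two conclusions. The only cosmetic difference is that the paper writes the chain starting from $b^3$ and $d^3$ while you start from $(bc)^3$ and $(ad)^3$; you also spell out the (trivial, and omitted in the paper) verification that $\gcd(a,c)=1$.
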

\begin{proof}
Since $ad-bc=1$ we get that
\begin{eqnarray*}
b^{3} & \equiv_{a} & -\left(bc\right)^{3}=-\left(ad-1\right)^{3}\equiv_{a}1\\
d^{3} & \equiv_{c} & \left(ad\right)^{3}=\left(1+bc\right)^{3}\equiv_{c}1
\end{eqnarray*}

\end{proof}

\subsection{\label{Full-escape-of}Full escape of mass}
Fix some integers $a,b,c,d$ which satisfy the conditions in Theorem~\ref{one_unit}
or Theorem~\ref{theta_not_unit}. Our goal in this section is to show that
if $\theta_{t}$ is a root of $h_t \defi f_{a,b,c,d,t}\left(x\right)$, then
the mass of the orbits $A\cdot L_{\bZ\left[\theta_{t}\right]}$ which
corresponds to the orders $\bZ\left[\theta_{t}\right]$ escape to
infinity as $\left|t\right|\to\infty$. Moreover, we shall show that
the shapes of the unit lattices in this family always converge to
the regular triangles lattice.

For that, we will need to find good approximations for the roots of
$h_{t}$. Heuristically, as $h_t=h_0+tg$ with $g=(ax-b)(cx-d)$, when $t$ is large,
$h_t$ will have roots close to the roots  $\frac{b}{a},\frac{d}{c}$ of $g$, and as it
is cubic, its third root will also be real. This simple idea is developed further below.
We shall use the following procedure. Since $f_{a,b,c,d,t}\left(\frac{b}{a}\right)=\pm\frac{1}{a^{3}}$
we will start with a guess that $\frac{b}{a}$ is close to the root.
We will then use Taylor expansion and the Newton Raphson method to
approximate the root.
\begin{theorem}
\label{thm:approx_roots}Let $h_{t}\left(x\right)$ be a familiy of
polynomials and $\alpha_t\in\bR$. Assume that
\begin{enumerate}
\item $h_{t}'\left(\alpha_{t}\right)\neq0$.
\item $\limfi t{\infty}\left|\frac{h_{t}(\alpha_{t})}{h_{t}'(\alpha_{t})}\right|=0$.
\item\label{ass229} $\limfi t{\infty}\left|\frac{h_{t}(\alpha_{t})}{h_{t}'(\alpha_{t})}\right|\left|\frac{h_{t}''\left(\alpha_{t}+\lambda\right)}{h_{t}'\left(\alpha_{t}\right)}\right|=0$
uniformly in $\left|\lambda\right|\leq1$. In particular this will
be true if $\left|\frac{h_{t}''\left(\alpha_{t}+\lambda\right)}{h_{t}'\left(\alpha_{t}\right)}\right|$
is uniformly bounded (in $t$ and $\left|\lambda\right|\leq1$).
\end{enumerate}
Then for $t$ large enough the $h_{t}$ have roots $\theta_{t}$ which
satisfy $$\theta_{t}=\alpha_{t}-\frac{h_{t}(\alpha_{t})}{h'_{t}(\alpha_{t})}+o\left(\left|\frac{h_{t}(\alpha_{t})}{h'_{t}(\alpha_{t})}\right|\right).$$

\end{theorem}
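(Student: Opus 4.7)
The plan is to perform a Newton--Raphson style analysis and then close the argument with the intermediate value theorem. Set $\delta_t\defi -h_t(\alpha_t)/h_t'(\alpha_t)$, which is well-defined by assumption (1) and tends to $0$ by assumption (2); exhibiting a root of $h_t$ of the form $\theta_t=\alpha_t+\delta_t+\eps_t$ with $\eps_t=o(\delta_t)$ would give exactly the claimed expansion. By Taylor's theorem with Lagrange remainder, for any $\eta\in\bR$ with $\av{\delta_t+\eta}\le 1$ there is a $\xi$ between $0$ and $\delta_t+\eta$ such that
\[
h_t(\alpha_t+\delta_t+\eta)=h_t(\alpha_t)+h_t'(\alpha_t)(\delta_t+\eta)+\tfrac{1}{2}h_t''(\alpha_t+\xi)(\delta_t+\eta)^2.
\]
The first two terms cancel by the definition of $\delta_t$, so dividing through by $h_t'(\alpha_t)$ gives
\[
\frac{h_t(\alpha_t+\delta_t+\eta)}{h_t'(\alpha_t)}=\eta+\tfrac{1}{2}\frac{h_t''(\alpha_t+\xi)}{h_t'(\alpha_t)}(\delta_t+\eta)^2.
\]

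Next, write $B_t\defi \sup_{\av{\lam}\le 1}\av{h_t''(\alpha_t+\lam)/h_t'(\alpha_t)}$; assumption (\ref{ass229}) says precisely that $B_t\av{\delta_t}\to 0$. I would then choose $\eta_t\defi 2B_t\delta_t^2$, so that $\eta_t/\av{\delta_t}=2B_t\av{\delta_t}\to 0$, yielding both $\eta_t\to 0$ and $\eta_t=o(\delta_t)$. For $t$ large one has $\av{\delta_t}+\eta_t<1$, so the Taylor identity applies at $\eta=\pm\eta_t$, and a short computation gives
\[
\tfrac{1}{2}B_t(\av{\delta_t}+\eta_t)^2=\tfrac{\eta_t}{4}\pa{1+\eta_t/\av{\delta_t}}^2\le \tfrac{\eta_t}{2}
\]
for $t$ sufficiently large. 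Consequently the right-hand side of the displayed identity has the same sign as $\eta$, so the values $h_t(\alpha_t+\delta_t+\eta_t)$ and $h_t(\alpha_t+\delta_t-\eta_t)$ have opposite signs.

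Applying the intermediate value theorem to the continuous function $h_t$ produces a root $\theta_t\in[\alpha_t+\delta_t-\eta_t,\,\alpha_t+\delta_t+\eta_t]$, so that
\[
\theta_t=\alpha_t-\frac{h_t(\alpha_t)}{h_t'(\alpha_t)}+O(\eta_t)=\alpha_t-\frac{h_t(\alpha_t)}{h_t'(\alpha_t)}+o\pa{\av{\delta_t}},
\]
as desired. The main technical point is the bookkeeping of the Taylor remainder: one must simultaneously ensure that $\eta_t$ is small enough to give $\eta_t=o(\delta_t)$ and large enough to dominate the quadratic error, and the choice $\eta_t=2B_t\delta_t^2$ achieves both. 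No further obstacles appear, since the IVT automatically delivers a real root in the desired window.
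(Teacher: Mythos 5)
Your proof is correct and follows essentially the same strategy as the paper's: a Newton--Raphson first guess $\alpha_t+\delta_t$, a Taylor expansion with Lagrange remainder controlled by assumption~(\ref{ass229}), and the intermediate value theorem to produce the root. The only difference is cosmetic: the paper first finds a root in $[\alpha_t,\alpha_t+2\delta_t]$ by a sign change at the endpoints and then reapplies Taylor at the root to sharpen the error to $o(|\delta_t|)$, whereas you center your sign-change interval at $\alpha_t+\delta_t$ with radius $\eta_t=2B_t\delta_t^2=o(|\delta_t|)$ and so obtain the refined estimate in one step.
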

\begin{proof}
We start by getting a first approximation for the root. Letting $\varepsilon=-2\frac{h_{t}\left(\alpha_{t}\right)}{h_{t}'(\alpha_{t})}$
and using the Taylor expansion for $h_{t}$ we get that for some $\left|\lambda\right|\le1$
we have
\begin{align*}
h_{t}(\alpha_{t}+\varepsilon) & =  h_{t}(\alpha_{t})+h_{t}'(\alpha_{t})\varepsilon+\frac{h_{t}''(\alpha_{t}+\lambda\varepsilon)}{2}\varepsilon^{2}\\
&=h_{t}(\alpha_{t})+\varepsilon h_{t}'(\alpha_{t})\left(1+\frac{h_{t}''(\alpha_{t}+\lambda\varepsilon)}{2h_{t}'(\alpha_{t})}\varepsilon\right)\\
 & =  h_{t}(\alpha_{t})-2h_{t}(\alpha_{t})\left(1+\frac{h_{t}''(\alpha_{t}+\lambda\varepsilon)}{2h_{t}'(\alpha_{t})}\varepsilon\right)\\
 &= h_{t}\left(\alpha_{t}\right)\left[-1+2\frac{h_{t}''(\alpha_{t}+\lambda\varepsilon)}{h_{t}'(\alpha_{t})}\frac{h_{t}\left(\alpha_{t}\right)}{h_{t}'(\alpha_{t})}\right].
\end{align*}
For $t$ big enough $\left|\varepsilon\right|\leq1$ so that $\left|\varepsilon\lambda\right|\leq1$
hence we can use assumption~\eqref{ass229} to also assume that the term in the
brackets is negative. We conclude that $h_{t}\left(\alpha_{t}+\varepsilon\right),h_{t}\left(\alpha_{t}\right)$
have opposite signs and therefore $h_{t}$ has a root $\theta_{t}\in\left[\alpha_{t},\alpha_{t}+\varepsilon\right]$.

Applying the taylor expansion for $\theta_{t}$ and using $\left|\alpha_{t}-\theta_{t}\right|\leq\left|\varepsilon\right|=2\left|\frac{h_{t}\left(\alpha_{t}\right)}{h_{t}'(\alpha_{t})}\right|$
we get
\begin{eqnarray*}
0 & = & h_{t}\left(\alpha_{t}\right)+h_{t}'(\alpha_{t})\left(\theta_{t}-\alpha_{t}\right)+\frac{h_{t}''(\alpha_{t}+\lambda\varepsilon)}{2}\left(\theta_{t}-\alpha_{t}\right)^{2}
\end{eqnarray*}
\begin{eqnarray*}
\left|\left(\theta_{t}-\alpha_{t}\right)+\frac{h_{t}\left(\alpha_{t}\right)}{h_{t}'(\alpha_{t})}\right| & = & \left|\frac{h_{t}''(\alpha_{t}+\lambda\varepsilon)}{2h_{t}'(\alpha_{t})}\right|\left|\theta_{t}-\alpha_{t}\right|^{2}\leq4\left|\frac{h_{t}''(\alpha_{t}+\lambda\varepsilon)}{2h_{t}'(\alpha_{t})}\right|\left|\frac{h_{t}(\alpha_{t})}{h_{t}'(\alpha_{t})}\right|^{2}\\
 & = & 2\left|\frac{h_{t}''(\alpha_{t}+\lambda\varepsilon)h_{t}\left(\alpha\right)}{h_{t}'(\alpha_{t})h_t'\left(\alpha_{t}\right)}\right|\left|\frac{h_{t}(\alpha_{t})}{h_{t}'(\alpha_{t})}\right|=o\left(\left|\frac{h_{t}(\alpha_{t})}{h_{t}'(\alpha_{t})}\right|\right)
\end{eqnarray*}
and we are done.
\end{proof}
We now consider the case where $a,b,c,d$ are fixed and $t$ goes
to infinity.
\begin{theorem}\label{approx_roots}
Fix $a,b,c,d\in\bZ$ such that $\frac{b}{a}\neq\frac{d}{c}$ and there
exists a polynomial $h$ satisfying $a^{3}h\left(\frac{b}{a}\right)=\epsilon_{1},\;c^{3}h\left(\frac{d}{c}\right)=\epsilon_{2}$, where $\eps_i=\pm1$.
If $\frac{b}{a}+\frac{d}{c}\in\bZ$, we will further assume that $c^{3}\epsilon_{1}+a^{3}\epsilon_{2}\neq0$ (in particular this is true if $a\neq \pm c$).
We denote $h_{t}(x)=h(x)+tg(x),\quad g(x)=\left(ax-b\right)\left(cx-d\right)$
where $t\in\bZ$. Then the following holds
\begin{enumerate}
\item For $\left|t\right|$ big enough the polynomial $h_{t}\left(x\right)$
is totally real and irreducible.
\item The 3 roots of $h_{t}(x)$ satisfy
\begin{eqnarray*}
\theta_{1} & = & \frac{b}{a}+\Theta\left(\frac{1}{\left|a^{3}\left(bc-ad\right)t\right|}\right)\\
\theta_{2} & = & \frac{d}{c}+\Theta\left(\frac{1}{\left|c^{3}\left(bc-ad\right)t\right|}\right)\\
\theta_{3} & = & -act+O\left(1\right)
\end{eqnarray*}

\item The discriminant of $h_{t}$ is
\[
D_{h_{t}}=\left(\theta_{1}-\theta_{2}\right)^{2}\left(\theta_{2}-\theta_{3}\right)^{2}\left(\theta_{3}-\theta_{1}\right)^{2}=\left(\frac{b}{a}-\frac{d}{c}\right)^{2}\left(act\right)^{4}+O\left(t^{3}\right)
\]

\end{enumerate}
\end{theorem}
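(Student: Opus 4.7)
The plan is to derive all three roots asymptotically, then deduce irreducibility, total reality, and the discriminant from those expansions. First I apply Theorem~\ref{thm:approx_roots} with $\alpha_t = b/a$. Since $g(b/a) = 0$, the value $h_t(b/a) = h(b/a) = \epsilon_1/a^3$ is constant in $t$, and since $b/a \neq d/c$ the derivative $g'(b/a) = bc - ad$ is nonzero, so $h_t'(b/a) = h'(b/a) + t(bc-ad)$ is linear in $t$ with nonzero leading coefficient; hence $|h_t(b/a)/h_t'(b/a)| = \Theta(1/t) \to 0$. Furthermore $g''(x) = 2ac$ is constant while $h''$ is linear, so $|h_t''(b/a+\lambda)/h_t'(b/a)|$ is the ratio of two expressions linear in $t$ with nonzero leading coefficients and is therefore uniformly bounded for $|\lambda|\le 1$. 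Theorem~\ref{thm:approx_roots} then produces
$$\theta_1 = \frac{b}{a} - \frac{\epsilon_1/a^3}{t(bc-ad) + h'(b/a)} + o(1/t) = \frac{b}{a} + \Theta\!\left(\frac{1}{a^3(bc-ad)t}\right).$$
The identical argument at $\alpha_t = d/c$ (where $g'(d/c) = ad-bc$) yields $\theta_2 = d/c + \Theta(1/(c^3(bc-ad)t))$.

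For the third root I use Vieta: the $x^2$-coefficient of $g(x) = acx^2 - (ad+bc)x + bd$ is $ac$, so if $p_0$ denotes the $x^2$-coefficient of $h$, the $x^2$-coefficient of $h_t$ is $p_0 + act \in \bZ$. Consequently $\theta_3 = -(p_0 + act) - \theta_1 - \theta_2 = -act + O(1)$. Total reality is automatic: two real roots already exist, so the third root of a real cubic must also be real. The discriminant follows by direct substitution: $(\theta_1-\theta_2)^2 = (b/a - d/c)^2 + O(1/t)$ while $(\theta_2-\theta_3)^2$ and $(\theta_3-\theta_1)^2$ each equal $(act)^2 + O(t)$, so
$$D_{h_t} = (\theta_1-\theta_2)^2(\theta_2-\theta_3)^2(\theta_3-\theta_1)^2 = (b/a - d/c)^2(act)^4 + O(t^3).$$

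It remains to establish irreducibility. Because $h_t$ is monic over $\bZ$, any rational root is an integer, so it suffices to show none of $\theta_1, \theta_2, \theta_3$ lies in $\bZ$ for large $|t|$. The equation $a^3 h(b/a) = \pm 1$ forces $\gcd(a,b) = 1$ (any common divisor would divide $\pm 1$), so $b/a \in \bZ$ only when $|a| = 1$; in either case $\theta_1 - b/a$ is nonzero and tends to $0$, placing $\theta_1$ strictly between consecutive integers for $|t|$ large. The same argument applies to $\theta_2$. For $\theta_3$, Vieta gives $\theta_3 \in \bZ$ iff $\theta_1 + \theta_2 \in \bZ$, and the Newton expansions sum to
$$\theta_1 + \theta_2 - \left(\frac{b}{a} + \frac{d}{c}\right) = \frac{a^3\epsilon_2 - c^3\epsilon_1}{a^3c^3(bc-ad)t} + o(1/t).$$
If $b/a + d/c \notin \bZ$, then $\theta_1+\theta_2$ is $o(1)$-close to a non-integer and so is not itself in $\bZ$. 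If $b/a + d/c \in \bZ$, the hypothesis $c^3\epsilon_1 + a^3\epsilon_2 \neq 0$ (automatic when $a \neq \pm c$, as noted in the paper) ensures the leading coefficient of the above expansion does not cancel, so the left-hand side is a nonzero $o(1)$ quantity and hence not an integer.

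The main obstacle is this last case: one must match the sign conventions used in the Newton iteration against the paper's hypothesis, verifying that the numerator $a^3\epsilon_2 - c^3\epsilon_1$ appearing in the expansion is the quantity that the stated hypothesis controls. Once this signed algebraic identity is confirmed, the hypothesis excludes precisely the coincidence in which $\theta_1 + \theta_2$ would land on the integer $b/a + d/c$, and the proof is complete.
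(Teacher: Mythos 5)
Your proof follows essentially the same route as the paper's: apply Theorem~\ref{thm:approx_roots} at $b/a$ and $d/c$, get $\theta_3$ from Vieta, deduce total reality from the existence of two real roots, compute the discriminant from the asymptotics, and reduce irreducibility to showing the roots are not integers, using the second-order Newton term to handle the case $b/a+d/c\in\bZ$.

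Regarding the ``obstacle'' you flag at the end: you can stop worrying, because your sign is right and the paper's stated hypothesis has a typo. Tracing the Newton steps gives
\[
\theta_1+\theta_2-\Bigl(\tfrac{b}{a}+\tfrac{d}{c}\Bigr)
\;=\;-\frac{\epsilon_1/a^3}{(bc-ad)t}-\frac{\epsilon_2/c^3}{(ad-bc)t}+o\!\left(\tfrac{1}{t}\right)
\;=\;\frac{a^3\epsilon_2-c^3\epsilon_1}{a^3c^3(bc-ad)t}+o\!\left(\tfrac{1}{t}\right),
\]
exactly the numerator you wrote. The paper's own proof has the same slip: when it passes to the limit of the bracketed expression it writes
$\frac{1}{(b/a-d/c)}\bigl[\frac{\epsilon_1}{a^3}+\frac{\epsilon_2}{c^3}\bigr]$,
whereas the second summand carries the factor $\frac{1}{(d/c-b/a)}=-\frac{1}{(b/a-d/c)}$, so the correct limit is
$\frac{1}{(b/a-d/c)}\bigl[\frac{\epsilon_1}{a^3}-\frac{\epsilon_2}{c^3}\bigr]$.
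Accordingly the hypothesis should read $c^3\epsilon_1-a^3\epsilon_2\neq 0$, not $c^3\epsilon_1+a^3\epsilon_2\neq 0$. Since $\epsilon_1,\epsilon_2\in\{\pm1\}$, both versions are equivalent to $|a|\ne|c|$ or an appropriate sign condition, and in particular both follow from $a\neq\pm c$, which is the hypothesis actually used in Theorem~\ref{thm:full_escape_ex}; so nothing downstream is affected. With this clarification your argument is complete and correct.
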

\begin{proof}
We first note that since $a,b,c,d$ are fixed we get that $h_{t}\left(x\right)=x^{3}+P_{t}x^{2}+Q_{t}x+R_{t}$
and
\begin{eqnarray*}
P_{t} & = & act+O\left(1\right)\\
Q_{t} & = & -\left(ad+bc\right)t+O\left(1\right)\\
R_{t} & = & bdt+O\left(1\right)
\end{eqnarray*}
Using these approximations and the hypothesis we get that
\begin{eqnarray*}
\left|h_{t}\left(\frac{b}{a}\right)\right| & = & \frac{1}{a^{3}},\quad h_{t}'\left(\frac{b}{a}\right)=2P_{t}\frac{b}{a}+Q_{t}+O\left(1\right)=\left(bc-ad\right)t+O\left(1\right)\\
h_{t}''\left(\frac{b}{c}+\lambda\right) & = & 2P_{t}+O\left(1\right)=act+O\left(1\right),\quad\forall\left|\lambda\right|\leq1
\end{eqnarray*}
It is now clear that
\begin{eqnarray*}
\left|\frac{h_{t}\left(b/a\right)}{h_{t}'\left(b/a\right)}\right| & = & \frac{1/a^{3}}{\left(bc-ad\right)t+O\left(1\right)}=\Theta\left(\frac{1}{\left|a^{3}\left(bc-ad\right)t\right|}\right)\to0\\
\left|\frac{h_{t}''\left(b/a+\lambda\right)}{h_{t}'\left(b/a\right)}\right| & = & \frac{act+O\left(1\right)}{\left(bc-ad\right)t+O\left(1\right)}\to\frac{ac}{bc-ad}
\end{eqnarray*}
Hence, we can use Theorem~\ref{thm:approx_roots} to approximate the root near
$\frac{b}{a}$ and similarly the roots near $\frac{d}{c}$ which are
\begin{eqnarray*}
\theta_{1} & = & \frac{b}{a}-\frac{h_{t}\left(b/a\right)}{h_{t}'\left(b/a\right)}+o\left(\frac{h_{t}\left(b/a\right)}{h_{t}'\left(b/a\right)}\right)\\
\theta_{2} & = & \frac{d}{c}-\frac{h_{t}\left(d/c\right)}{h_{t}'\left(d/c\right)}+o\left(\frac{h_{t}\left(d/c\right)}{h_{t}'\left(d/c\right)}\right).
\end{eqnarray*}
Note that since $\frac{b}{a}\neq\frac{d}{c}$, these two roots are
distinct for $\left|t\right|$ big enough, so that $h_{t}\left(x\right)$,
which is real of degree three, has at least two real roots, and therefore
has exactly three real roots.

We claim that for $|t|$ big enough, the roots of $h_t(x)$ are not integers. If $\frac{b}{a}\notin\bZ$, then for $\left|t\right|$ big enough
we see that $\theta_{1}\notin\bZ$. If $\frac{b}{a}\in\bZ$, then
for $\left|t\right|$ big enough $\theta_{1}$ can be an integer if
and only if it is $\frac{b}{a}$, but $h_{t}(\frac{b}{a})=\pm\frac{1}{a^{3}}\neq0$.
It follows that $\theta_{1},\theta_{2}\notin\bZ$ for $\left|t\right|$
big enough. Finally, since $\theta_{3}=-P_{t}-\theta_{1}-\theta_{2}$
and $P_{t}$ is an integer, we see that $\theta_{3}$ is an integer
if and only if $\theta_{1}+\theta_{2}$ is an integer. If $\frac{b}{a}+\frac{d}{c}\notin\bZ$,
then $\theta_{3}$ is not an integer for $\left|t\right|$ large enough.
If $\frac{b}{a}+\frac{d}{c}\in\bZ$, then we need to consider the
second approximation
\begin{align*}
&\frac{h(b/a)}{h'(b/a)+tg'(b/a)}+\frac{h(d/c)}{h'(d/c)+tg'(d/c)} \\
& = \frac{1}{act}\left[\frac{\epsilon_{1}}{a^{3}}\frac{1}{\frac{h'(b/a)}{act}+\left(\frac{b}{a}-\frac{d}{c}\right)}+\frac{\epsilon_{2}}{c^{3}}\frac{1}{\frac{h'(d/c)}{act}+\left(\frac{d}{c}-\frac{b}{a}\right)}\right]
\end{align*}
The limit of the expression inside the brackets is $\frac{1}{\left(\frac{b}{a}-\frac{d}{c}\right)}\left[\frac{\epsilon_{1}}{a^{3}}+\frac{\epsilon_{2}}{c^{3}}\right]$.
We assumed that in this case $\frac{\epsilon_{1}}{a^{3}}+\frac{\epsilon_{2}}{c^{3}}\neq0$
so that the root $\theta_{3}$ modulo $\bZ$ is $\Theta\left(\frac{1}{t}\right)+o\left(\frac{1}{t}\right)$
and in particular it is not an integer.

We showed that $h_{t}(x)$ doesn't have integer roots for $\left|t\right|$
big enough, and since it monic and has degree $3$, we conclude that
it is irreducible by using Gauss' lemma.

Finally, the approximation of the discriminant follows from the approximation
of the roots.
\end{proof}
Recall from Corollary~\ref{fundamental_units} that if $\frac{R'}{\log^{2}\left(D\right)}<\frac{1}{8}$
where $R'$ is the relative discriminant for some independent units,
then these units are actually a fundamental set. Using this, we are
able to show that $a\theta-b,\;c\theta-d$ are fundamental for $\left|t\right|$
big enough.
\begin{theorem}\label{thm:fixed_parameters}
Consider a family of polynomials $h_{t}\left(x\right)$ as in Theorem~\ref{approx_roots}
and choose a root $\theta^{(t)}$ for each $t$. Then
for $t$ large enough the unit group of $\bZ[\theta^{(t)}]$ is generated by $\left\{ a\theta^{(t)}-b,c\theta^{(t)}-d,-1\right\} $.
Furthermore, the shape of unit lattices of $\bZ\left[\theta^{\left(t\right)}\right]$
converge to the shape of the regular triangle lattice $\bZ\left[\omega\right]$ (where $\om=\exp(\frac{2\pi i}{3})$ and 
the correspondence is $\left(a\theta^{(t)}-b\right)\mapsto1$ and $\left(c\theta^{(t)}-d\right)\mapsto\left(1+\omega\right)$
) and the compact $A$-orbits of the lattices $L_t\in X$ corresponding to the orders $\bZ[\theta^{(t)}]$ exhibit full escape of mass.
\end{theorem}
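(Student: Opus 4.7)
The plan is to combine the root estimates of Theorem~\ref{approx_roots} with Cusick's criterion for fundamentality (Corollary~\ref{fundamental_units}) and Theorem~\ref{thm:partial escape} (applied to a carefully chosen simplex set) to establish all four claims.

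First, Theorem~\ref{approx_roots} gives the approximations $\theta_1^{(t)} = b/a + \Theta(1/t)$, $\theta_2^{(t)} = d/c + \Theta(1/t)$, $\theta_3^{(t)} = -act + O(1)$. Setting $L = \log|t|$ and evaluating $\sigma_i(u_t) = a\theta_i^{(t)} - b$ and $\sigma_i(v_t) = c\theta_i^{(t)} - d$ for $u_t = a\theta^{(t)} - b$, $v_t = c\theta^{(t)} - d$, one obtains
\[
\psi(u_t) = (-L + O(1),\; O(1),\; L + O(1)), \qquad \psi(v_t) = (O(1),\; -L + O(1),\; L + O(1)).
\]
The relative regulator of $\{u_t, v_t\}$ is therefore $R_t' = L^2 + O(L)$. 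Combined with the asymptotic $D_{h_t} = (ac)^2(bc - ad)^2 t^4 (1 + o(1))$ from Theorem~\ref{approx_roots}, this gives $\log^2(D_{h_t}/4) = 16L^2 + O(L)$, so $R_t'/\log^2(D_{h_t}/4) \to 1/16 < 1/8$; Corollary~\ref{fundamental_units} then implies $\{u_t, v_t, -1\}$ generates $\bZ[\theta^{(t)}]^{\times}$ for all $|t|$ large.

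Shape convergence is immediate: $|\psi(u_t)|^2 = |\psi(v_t)|^2 = 2L^2 + O(L)$ and $\psi(u_t)\cdot\psi(v_t) = L^2 + O(L)$, so the angle between $\psi(u_t)$ and $\psi(v_t)$ tends to $\pi/3$ while their lengths become asymptotically equal, identifying the shape of the unit lattice with that of $\bZ[\omega]$. The correspondence $u_t \mapsto 1$, $v_t \mapsto 1+\omega$ matches since $|1| = |1+\omega| = 1$ and $\arg(1+\omega) = \pi/3$.

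The delicate step is full escape of mass. The naive simplex set $\{\psi(u_t), \psi(v_t), -\psi(u_t)-\psi(v_t)\}$ has $\lceil W\rceil \sim L$ and, together with $ht(L_{M_t}) \sim |t|^{2/3}$, only yields $(2/3)^2 = 4/9$-escape via Theorem~\ref{thm:partial escape}. The key idea is to instead use the better-balanced simplex set
\[
\Phi'_t = \{\psi(u_t),\; -\psi(v_t),\; \psi(v_t) - \psi(u_t)\},
\]
which lies in $\psi(\bZ[\theta^{(t)}]^{\times})$ (it is the $\psi$-image of $\{u_t,\, v_t^{-1},\, v_t u_t^{-1}\}$) and $\bZ$-spans the same lattice. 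Substituting the asymptotic forms above, the six vertices of $W_{\Phi'_t}$ are computed to be
\[
(\pm 2L/3, \mp L/3, \mp L/3),\quad (\mp L/3, \pm 2L/3, \mp L/3),\quad (\mp L/3, \mp L/3, \pm 2L/3),
\]
up to $O(1)$; their maximum coordinates are either $2L/3$ or $L/3$, hence $\lceil W_{\Phi'_t}\rceil = \tfrac{2}{3}L + O(1)$. For any fixed $r < 1$ the tightness inequality $\exp(r\lceil W_{\Phi'_t}\rceil) = |t|^{2r/3 + o(1)} \le ht(L_{M_t})$ holds for $|t|$ large, so $\Phi'_t$ is $(R, r)$-tight for some fixed $R \ge 1$. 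Theorem~\ref{thm:partial escape} then yields $r^2$-escape, and since this holds for every $r < 1$, we conclude full escape of mass.
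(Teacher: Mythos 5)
Your proof is correct and follows essentially the same route as the paper's: approximate the embeddings $\psi(a\theta^{(t)}-b)\approx L(-1,0,1)$ and $\psi(c\theta^{(t)}-d)\approx L(0,-1,1)$, compute $R'_t/\log^2(D_t)\to 1/16$ and invoke Corollary~\ref{fundamental_units}, and then use the ``cyclic'' simplex set $\{\psi(u_t),\,-\psi(v_t),\,\psi(v_t)-\psi(u_t)\}\approx L\{(-1,0,1),(0,1,-1),(1,-1,0)\}$ for tightness --- which is exactly the simplex set the paper chooses. The only cosmetic differences are that you motivate the simplex-set choice by explicitly showing the naive set only gives $4/9$-escape, and you deduce full escape by letting $r\to 1^-$ rather than establishing $(R,1)$-tightness directly as the paper does; both are equivalent given Theorem~\ref{thm:partial escape}.
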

\begin{proof}
The embedding of the units $a\theta^{(t)}-b,\;c\theta^{(t)}-d$ in
$\bR^{3}$ is
\begin{align*}
& \log\left(\left|a\theta^{(t)}-b\right|\right) \\
&=  \left(-\log\left|a^{2}\left(bc-ad\right)t\right|,\log\left|\frac{ad}{c}-b\right|,\log\left|a^{2}ct\right|\right)+O\left(1\right)\\
&=\log\left|t\right|\left(-1,0,1\right)+O\left(1\right)\\
&\log\left(\left|c\theta^{\left(t\right)}-d\right|\right) \\
& = \left(\log\left|\frac{cb}{a}-d\right|,-\log\left|c^{2}\left(bc-ad\right)t\right|,\log\left|ac^{2}t\right|\right)+O\left(1\right)\\
&=\log\left|t\right|\left(0,-1,1\right)+O\left(1\right)
\end{align*}

The relative regulator is $\log^{2}\left|t\right|+O\left(\log\left|t\right|\right)$
so that
\[
\frac{R_{t}'}{\log^{2}\left(D_{t}\right)}=\frac{\log^{2}\left(t\right)+O\left(\log\left|t\right|\right)}{\left(\log\left|\left(\frac{b}{a}-\frac{d}{c}\right)^{2}\left(act\right)^{4}\right|+O\left(1\right)\right)^{2}}=\frac{\log^{2}\left(t\right)+O\left(\log\left|t\right|\right)}{\left(4\log\left|t\right|+O\left(1\right)\right)^{2}}\to\frac{1}{16}.
\]
It follows that for $\left|t\right|$ big enough the units $a\theta^{(t)}-b,\;c\theta^{(t)}-d$
are a fundamental set.

We note that $\left(-1,0,1\right),\left(0,-1,1\right)$ generate the
regular triangles lattice. Indeed, the rotation around $\left(1,1,1\right)$
by $\frac{2\pi}{3}$ is just the cyclic permutation, and these two
vector are just the rotation of each other, up to a minus sign.

Using the simplex set 
$$\Phi=\left\{ \log\left|t\right|\left(-1,0,1\right),\log\left|t\right|\left(1,-1,0\right),\log\left|t\right|\left(0,1,-1\right)\right\} +O\left(1\right),$$
it is easily seen that $\left\lceil W_{\Phi}\right\rceil =\log\left|t\right|\frac{2}{3}+O\left(1\right)$.
On the other hand $D_{t}^{-1/6}\left(1,1,1\right)$ is in the normalized
unimodular lattice $L_{t}$ that correspond to $\bZ\left[\theta^{(t)}\right]$
so that $ht(L_{t})\geq\frac{1}{\sqrt{3}}D_{t}^{1/6}=\Theta\left(t^{2/3}\right)$.
We conclude that $\exp\left(\left\lceil W_{\Phi}\right\rceil \right)=O\left(t^{\frac{2}{3}}\right)\leq R\cdot ht\left(L_{t}\right)$
for some $R$ big enough and all $\left|t\right|$ big enough, namely
these orders are $\left(R,1\right)$-tight (see Definition~\ref{simplex_set}
for the definitions). It follows that there is a full escape of mass
by Theorem~\ref{mass_escape} which completes the proof.
\end{proof}
In the case of simplest cubic fields, the fundamental units are $\theta,\theta+1$.
It is known that for infinitely many $t$, the order $\bZ\left[\theta_{t}\right]$,
where $\theta_{t}$ is the root of $f_{t}(x)=x^{3}-3x+1-tx(x+1)$,
is the ring of integers of $\bQ\left(\theta_{t}\right)$. In particular
the $\bZ\left[\theta_{t}\right]$ belong to different field extensions.
We conclude that there are orbits coming from different fields such
that their mass escape to infinity.

While we do not have an example of orbits arising from the same field,
we can create long finite sequences of orbits such that most of their
mass is near the cusp.

Note that if the unit group is generated by $\left\langle na\theta-b,nc\theta-d\right\rangle $,
then $\bZ\left[\theta\right]$ and $\bZ\left[n\theta\right]$ have
the same unit group. Since $D_{n\theta}=n^{6}D_{\theta}$, the mass
of its corresponding orbit is farther away than the mass of $\bZ\left[\theta\right]$.
This leads to the following result.
\begin{theorem}
Let $1>\varepsilon>0$ and $K\subseteq SL_{3}(\bR)/SL_{3}(\bZ)$ be
a compact set. Then for each $N\in\bN$ we can find a sequence of
decreasing orders $\bZ\left[\theta_{1}\right]>\bZ\left[\theta_{2}\right]>\cdots>\bZ\left[\theta_{N}\right]$
with their corresponding orbits $A\cdot L_{1},...,A\cdot L_{N}$ such
that $\frac{\mu_{i}(A\cdot L_{i}\cap K)}{\mu_{i}(A\cdot L_{i})}<\varepsilon$
for each $i$ where $\mu_{i}$ is the induced $A$-invariant measure
on $A\cdot L_{i}$.\end{theorem}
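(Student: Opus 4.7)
The plan is to pack the full chain of $N$ orders inside a single ring $\bZ[\theta]$, by setting $\bZ[\theta_i]:=\bZ[2^{i-1}\theta]$ for $i=1,\dots,N$, where $\theta$ is a single cubic integer chosen so that one pair of fundamental units of $\bZ[\theta]$ will also be a fundamental pair of units for every $\bZ[2^{i-1}\theta]$. The $N$ orders will then share the same unit lattice, while their discriminants scale as $2^{6(i-1)}D_\theta$, pushing the corresponding $A$-orbits higher and higher in $X$.

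For the construction of $\theta$ I would set $M:=2^{N-1}$ and apply \S\ref{nonzero_condition} with $(a,b,c,d)=(M,1,2M,1)$ and $\eps_1=\eps_2=1$. The identities $M^3(1/M)^3=1$ and $(2M)^3(1/(2M))^3=1$ show that $h_0(x)=x^3$ lies in $\cF_{a,b,c,d,\eps_1,\eps_2}$, so Lemma~\ref{lem:general form} yields the family
\[
f_t(x) \;=\; x^3 + t(Mx-1)(2Mx-1) \;=\; x^3 + 2M^2 t x^2 - 3Mt x + t,\qquad t\in\bZ.
\]
Since $b/a\ne d/c$ and, for $M\ge 2$, $b/a+d/c=3/(2M)\notin\bZ$, Theorem~\ref{approx_roots} gives that for all $|t|$ large enough $f_t$ is totally real and irreducible; I let $\theta=\theta_t$ be the root near $1/M$, and Theorem~\ref{thm:fixed_parameters} then identifies $\{M\theta-1,\,2M\theta-1\}$ as a fundamental pair of units of $\bZ[\theta]$.

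The next step is to verify that $\bZ[2^{i-1}\theta]^\times=\bZ[\theta]^\times$ for every $i=1,\dots,N$. Since $2^{i-1}\mid M$, both $M\theta-1$ and $2M\theta-1$ trivially lie in $\bZ[2^{i-1}\theta]$, but being a \emph{unit} of the subring also requires the inverses to lie there. Using $N(M\theta-1)=-M^3 f_t(1/M)=-1$ and expanding $(M\theta-1)^{-1}=-\prod_{k\ne 1}(M\theta_k-1)$ via Vieta's identities for $f_t$, one obtains
\[
(M\theta-1)^{-1} \;=\; -M^2\theta^2 - M(2M^3 t+1)\theta + (M^3 t-1),
\]
whose coefficients of $\theta^2$ and $\theta$ are divisible by $M^2$ and $M$ respectively, hence by $2^{2(i-1)}$ and $2^{i-1}$; thus $(M\theta-1)^{-1}\in\bZ[2^{i-1}\theta]$, and the same argument applies to $(2M\theta-1)^{-1}$.

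Finally I invoke the escape-of-mass estimate. Pick $H>0$ with $K\subset X^{\le H}$. As all orders share the unit lattice $\psi(\bZ[\theta]^\times)$, one may use the same simplex set $\Phi$ from the proof of Theorem~\ref{thm:fixed_parameters}, with $\lceil W_\Phi\rceil=\tfrac{2}{3}\log|t|+O(1)$ for every $i$, while $ht(L_{\bZ[2^{i-1}\theta]})\ge c\cdot 2^{i-1}D_\theta^{1/6}=\Theta(2^{i-1}t^{2/3})$ only grows with $i$ and $t$. The estimate inside the proof of Theorem~\ref{thm:partial escape} then gives $\mu_i(X^{>H})\ge r_0^2$ for any $r_0<\log(ht(L_{\bZ[2^{i-1}\theta]})/H)/\lceil W_\Phi\rceil$; choosing $|t|$ large enough (depending on $N,\eps,K$) forces this ratio above $\sqrt{1-\eps}$ uniformly in $i=1,\dots,N$, which yields the required $\mu_i(A\cdot L_i\cap K)/\mu_i(A\cdot L_i)<\eps$. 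The main technical obstacle is the inverse-in-subring verification of the third paragraph, which hinges precisely on the divisibility of $a=M$ and $c=2M$ by every relevant power of $2$.
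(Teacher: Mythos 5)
Your proposal is correct and uses the same basic construction as the paper (a chain of nested orders $\bZ[\theta]>\bZ[2\theta]>\cdots>\bZ[2^{N-1}\theta]$ coming from a polynomial of the form $x^3+t(2^N x-1)(2^{N-1}x-1)$), but you verify the key descent step differently. The paper observes the self-similarity $f_{t,n}(x/2)=\tfrac{1}{8}f_{8t,n-1}(x)$, which identifies $\bZ[2\theta_{t,n}]$ with $\bZ[\theta_{8t,n-1}]$; this lets it re-apply Theorem~\ref{thm:fixed_parameters} separately to each order in the chain (with parameters $a=2^{n},b=1,c=2^{n-1},d=1$ for each $n\le N$), choosing a single threshold $T$ that works uniformly over the finitely many $n$. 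You instead apply Theorem~\ref{thm:fixed_parameters} only once, to the top order $\bZ[\theta]$, and then show by an explicit Vieta computation that the units $M\theta-1,\,2M\theta-1$ \emph{and their inverses} lie in every subring $\bZ[2^{i-1}\theta]$, because the coefficients in the basis $1,\theta,\theta^2$ are divisible by $2^{i-1}$ and $2^{2(i-1)}$ respectively. This makes explicit a fact the paper's route leaves implicit: all $N$ orders share literally the same unit lattice, so a single simplex set $\Phi$ and fundamental domain can be reused, with $ht(L_{\bZ[2^{i-1}\theta]})$ growing monotonically in $i$ so that the $i=1$ case is the bottleneck. Both routes are valid; the paper's is shorter because the scaling identity does the bookkeeping for free, while yours exposes the shared unit-lattice structure more concretely. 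One small omission worth adding: you should note that $\left[\bZ[2^{i-2}\theta]:\bZ[2^{i-1}\theta]\right]=8$ (as the paper does), so the orders are indeed strictly decreasing as the statement requires.
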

\begin{proof}
Consider the polynomials $f_{t,n}(x)=x^{3}+t(2^{n}x-1)(2^{n-1}x-1)$
with corresponding roots $\theta_{t,n}$. From the previous theorem,
for a given compact set $K$ and $\varepsilon>0$ we can find $T$
big enough such that for all $t>T$ the $A$-orbits $A\cdot L_{t,n}$
corresponding to the orders $\bZ\left[\theta_{t,n}\right]$ satisfy
$\frac{\mu_{t,n}\left(A\cdot L_{t,n}\cap K\right)}{\mu_{t,n}\left(A\cdot L_{t,n}\right)}<\varepsilon$
for all $1\leq n\leq N$.

Notice that the minimal polynomial for $2\theta_{t,n}$ is
\begin{align*}
f_{t,n}\left(\frac{x}{2}\right)&=\frac{1}{8}x^{3}+t\left(2^{n-1}x-1\right)\left(2^{n-2}x-1\right)\\
&=\frac{1}{8}\left(x^{3}+8t\left(2^{n-1}x-1\right)\left(2^{n-2}x-1\right)\right)=\frac{1}{8}f_{8t,n-1}.
\end{align*}
It follows that $2\theta_{t,n}$ is a root of $f_{8t,n-1}$. Since
$\bZ\left[2\theta_{t,n}\right]=span\left\{ 1,2\theta_{t,n},4\theta_{t,n}^{2}\right\} $,
we see that $\left[\bZ\left[\theta_{t,n}\right]:\bZ\left[2\theta_{t,n}\right]\right]=8$,
so these are distinct orders. Using induction, we get the orders $\bZ\left[2^{N-1}\theta\right]<\bZ\left[2^{N-2}\theta\right]<\cdots<\bZ\left[2\theta\right]<\bZ\left[\theta\right]$
where $\theta:=\theta_{T,N}$, such that their corresponding orbits
$A\cdot L_{i}$ all satisfy $\frac{\mu_{i}\left(A\cdot L_{i}\cap K\right)}{\mu_{i}\left(A\cdot L_{i}\right)}<\varepsilon$.\end{proof}
\begin{problem}
Is there an infinite sequence of lattices coming from a fixed field,
or better yet, coming from a sequence of decreasing orders, which exhibits escape of mass?
\end{problem}

\subsection{\label{one_unit}The lattices $\protect\bZ\left[\theta\right]$,
where $\theta$ is a unit}

In the previous section, the shapes of the unit lattices converged
to the regular triangles lattice. In this section we show how to construct
more examples with different unit lattice shape.

We shall now confine our attention to analyze sequences of polynomials arising from Theorem~\ref{thm:one_unit} where the parameters $a,b$ are chosen 
as functions of $t$. More precisely, given a mutually cubic root sequence $(a_t,b_t)$ we define (as in~\eqref{eq:ht}) 
\begin{align}\label{eq:ht2}
h_{t}\left(x\right) & = f_{a_t,b_t,t}\left(x\right)=x^{3}+P_{t}x^{2}+Q_{t}x+1\\
\nonumber\left(P_{t},Q_{t}\right) & =  \left(\frac{\left(a^{3}-1\right)^{2}-b^{3}}{ab^{2}},-a\left(\frac{a^{3}-1}{b}\right)\right)+t\left(a,-b\right).
\end{align}
By Theorem~\eqref{thm:one_unit}, if $\theta_t$ is a root of $h_t$ then $\theta_t,a_t\theta_t-b_t$ are units of the order
$\bZ\left[\theta_t\right]$. 
In fact, we will make the following standing assumption that will help us in the analysis.
\begin{assumption}
\label{factAB} Let $h_t$ be the sequence of polynomials in~\eqref{eq:ht2} corresponding to  the mutually cubic root sequence $(a_t,b_t)$ 
and assume furthermore that
\begin{enumerate}
\item $\left|b_t\right|>\left|a_t\right|>0$ for each $t$, and
\item $\tilde{a}:=\limfi t{\infty}\frac{\log\left|a_{t}\right|}{\log\left(t\right)}$
and $\tilde{b}:=\limfi t{\infty}\frac{\log\left|b_{t}\right|}{\log(t)}$
exist and $\tilde{a}<\frac{1}{3},\;\tilde{b}<1$.
\end{enumerate}
\end{assumption}
\begin{remark}
The assumption above implies that $a_{t}^{r}=o(t)$ for $r\leq3$ and
that $b_{t}=o\left(t\right)$, so that $P_{t}=at+o\left(t\right)$
and $Q_{t}=-bt+o\left(t\right)$.
\end{remark}

We remark that some of the claims below are true in a more general
setting than the assumption above.
\begin{theorem}
Assume \ref{factAB}. Then the polynomial $h_{t}(x)$ is irreducible
over $\bQ$ for $\left|t\right|$ big enough.\end{theorem}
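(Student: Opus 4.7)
The plan is to apply Gauss's lemma: since $h_t(x)\in\bZ[x]$ is monic of degree $3$, it is irreducible over $\bQ$ if and only if it has no integer root. The constant term of $h_t$ is $1$, so the only possible integer roots are $\pm 1$. Thus it suffices to show that $h_t(\pm 1)\neq 0$ for all sufficiently large $|t|$.

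The heart of the argument is a direct evaluation at $\pm 1$. Using the formulas~\eqref{eq:ht2} for $P_t, Q_t$ together with the asymptotics $P_t = a_t t + o(t)$ and $Q_t = -b_t t + o(t)$ recorded in the remark following Assumption~\ref{factAB}, I would compute
\[
h_t(1) = 2 + P_t + Q_t = (a_t - b_t)\,t + o(t),\qquad h_t(-1) = P_t - Q_t = (a_t + b_t)\,t + o(t).
\]

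To finish, I would invoke the standing hypothesis $|b_t| > |a_t| > 0$: since $a_t, b_t$ are nonzero integers with $|b_t|\ge |a_t|+1$, the integers $a_t\pm b_t$ are both nonzero and in fact satisfy $|a_t\pm b_t|\ge 1$. Hence
\[
|h_t(\pm 1)| \;\ge\; |a_t\pm b_t|\cdot|t| - o(t) \;\ge\; |t| - o(t),
\]
which is strictly positive once $|t|$ is large, completing the proof.

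The only mild bookkeeping point, which is not a serious obstacle, is to check that the implicit $o(t)$ terms, which a priori depend on $t$ through $a_t$ and $b_t$, are indeed dominated by the leading contribution $(a_t\pm b_t)t$. This is precisely what is asserted in the remark after Assumption~\ref{factAB}: the conditions $\tilde{a}<1/3$ and $\tilde{b}<1$, combined with $|b_t|>|a_t|$ (which forces $\tilde{b}\ge\tilde{a}$ once the sequences are unbounded), guarantee that the non-linear-in-$t$ parts of $P_t$ and $Q_t$ are genuinely of order $o(t)$, uniformly in the choice of $(a_t,b_t)$. No further input is needed beyond Gauss's lemma and this asymptotic bookkeeping.
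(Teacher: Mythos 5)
Your proof is correct and follows essentially the same route as the paper: reduce irreducibility to the absence of roots at $\pm1$ (using that the leading and constant coefficients are $\pm1$), then read off $h_t(\pm1)=(a_t\mp b_t)t+o(t)$ from the asymptotics of $P_t,Q_t$ in the remark after Assumption~\ref{factAB}, and conclude via $|a_t\pm b_t|\ge 1$, which is guaranteed by $|b_t|>|a_t|>0$. Your extra remark about the uniformity of the $o(t)$ terms is a useful clarification of a point the paper leaves implicit, but it does not change the substance of the argument.
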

\begin{proof}
Note that since both the leading and free coefficient of $h_{t}$
are $\pm1$, we get that $h_{t}$ is reducible (over $\bQ$) if and
only if it has a root in $\pm1$.
\begin{eqnarray*}
h_{t}(\pm1) & = & O\left(1\right)+P_{t}\pm Q_{t}=\left(a\pm b\right)t+o\left(t\right)
\end{eqnarray*}
Since $b\neq\pm a$ are integers we conclude that $\left|h_{t}(\pm1)\right|\geq\frac{t}{2}$
for $\left|t\right|$ big enough, and hence $h_{t}\left(\pm1\right)\neq0$. \end{proof}
\begin{lemma}
Assume \ref{factAB}. Then
\begin{enumerate}
\item\label{1441} $\left|\frac{h_{t}''(\lambda)}{h_{t}'(0)}\right|$ is uniformly bounded
for $\left|\lambda\right|\leq1$ and $\left|\frac{h_{t}(0)}{h_{t}'(0)}\right|=\Theta\left(\frac{1}{\left|tb\right|}\right)\to0$.
\item\label{1442} $\left|\frac{h_{t}''(b/a+\lambda)}{h_{t}'(b/a)}\right|$ is uniformly
bounded for $\left|\lambda\right|\leq1$ and $\left|\frac{h_{t}(b/a)}{h_{t}'(b/a)}\right|=\Theta\left(\frac{1}{\left|a^{3}bt\right|}\right)\to0$.
\end{enumerate}
\end{lemma}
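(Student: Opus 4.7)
The plan is to compute $h_t$ and its first two derivatives in closed form and evaluate them at the two points $x = 0$ and $x = b_t/a_t$, using the coefficient estimates $P_t = a_t t + o(t)$ and $Q_t = -b_t t + o(t)$ provided by the remark following Assumption~\ref{factAB}. Concretely, $h_t'(x) = 3x^2 + 2P_t x + Q_t$ and $h_t''(x) = 6x + 2P_t$.

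For part~\eqref{1441}, the evaluation is immediate: $h_t(0) = 1$, $h_t'(0) = Q_t = -b_t t + o(t)$, and uniformly in $|\lambda| \leq 1$ one has $h_t''(\lambda) = 2P_t + O(1) = 2a_t t + o(t)$. Dividing gives
\eq{\left|\frac{h_t(0)}{h_t'(0)}\right| = \frac{1}{|b_t t|(1+o(1))} = \Theta\!\left(\tfrac{1}{|tb|}\right) \to 0,}
and $|h_t''(\lambda)/h_t'(0)| \leq (2|a_t|/|b_t|)(1+o(1))$, which is uniformly bounded since $|a_t|<|b_t|$ by Assumption~\ref{factAB}.

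For part~\eqref{1442}, the numerator $h_t(b_t/a_t)$ is controlled not by expansion but by Lemma~\ref{lem:unit_condition}: the polynomial $h_t$ was constructed in Theorem~\ref{thm:one_unit} so that $a_t^3 h_t(b_t/a_t) = \pm 1$, hence $|h_t(b_t/a_t)| = 1/|a_t|^3$ exactly. For the derivative, I expand
\eq{h_t'(b_t/a_t) = \tfrac{3b_t^2}{a_t^2} + 2P_t \tfrac{b_t}{a_t} + Q_t = \tfrac{3b_t^2}{a_t^2} + 2b_t t + o(t) - b_t t + o(t) = b_t t + \tfrac{3b_t^2}{a_t^2} + o(t),}
and I must verify that both correction terms are $o(b_t t)$. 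The ratio $3b_t^2/(a_t^2 \cdot b_t t) = 3b_t/(a_t^2 t)$ tends to $0$ because $b_t = o(t)$ (using $\tilde{b}<1$) and $|a_t|\geq 1$; similarly $o(t)/(b_t t) \to 0$. Hence $h_t'(b_t/a_t) = b_t t (1+o(1))$. The second derivative satisfies $h_t''(b_t/a_t + \lambda) = 2a_t t + 6b_t/a_t + O(1) + o(t) = 2a_t t + o(t)$ uniformly in $|\lambda|\leq 1$, again because $b_t/a_t = o(a_t t)$ under the standing assumptions. The two displayed estimates then follow by division, giving $|h_t(b_t/a_t)/h_t'(b_t/a_t)| = \Theta(1/|a_t^3 b_t t|)$ and $|h_t''(b_t/a_t + \lambda)/h_t'(b_t/a_t)| = \Theta(|a_t|/|b_t|) \leq O(1)$.

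There is no real obstacle beyond bookkeeping of error terms. The one point worth care is to ensure that the auxiliary quantities $3b_t^2/a_t^2$ and $b_t/a_t$ are genuinely of smaller order than $b_t t$ and $a_t t$ respectively; this is exactly what the hypotheses $|a_t|<|b_t|$ and $\tilde{b}<1$ (together with $|a_t|\geq 1$) are tailored to guarantee, and where the assumption $\tilde{a}<1/3$ is implicitly used via the $o(t)$ bounds on $P_t - a_t t$ and $Q_t + b_t t$.
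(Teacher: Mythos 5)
Your argument follows the same route as the paper: write out $h_t'$, $h_t''$ explicitly, plug in the estimates $P_t = a_t t + o(t)$, $Q_t = -b_t t + o(t)$, and divide. Part~(1) is clean. In part~(2), however, the intermediate claim $2P_t\,\tfrac{b_t}{a_t} = 2b_t t + o(t)$ is not quite right. Writing $P_t = a_t t + E_P$ with $E_P = \frac{(a^3-1)^2 - b^3}{ab^2}$, one has
\[
2P_t\,\tfrac{b}{a} = 2bt + 2E_P\tfrac{b}{a} = 2bt + \frac{2(a^3-1)^2}{a^2 b} - \frac{2b^2}{a^2},
\]
and the last piece $b^2/a^2 = t^{2(\tilde b - \tilde a) + o(1)}$ is \emph{not} guaranteed to be $o(t)$: under the standing assumptions ($\tilde a < 1/3$, $\tilde b < 1$, $\tilde b \le 3\tilde a$ from the divisibility $b_t \mid a_t^3-1$) one can have, say, $\tilde a = 0.3$, $\tilde b = 0.9$, giving $b^2/a^2 \sim t^{1.2}$. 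So the displayed bookkeeping, which attributes the quadratic-in-$b/a$ correction entirely to the $3b_t^2/a_t^2$ term and labels the rest ``$o(t)$,'' is off.

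This slip is harmless because the bound you actually need and actually verify is against $b_t t$, not $t$. Carrying the computation exactly gives $h_t'(b/a) = b_t t + \frac{b^2}{a^2} + \frac{(a^3-1)(a^3-2)}{a^2 b}$, and both correction terms are $o(b_t t)$: for the first, $\frac{b^2}{a^2 \cdot bt} = \frac{b}{a^2 t} \to 0$ since $|a_t|\ge 1$ and $b_t = o(t)$ (which is literally the check you ran for $3b^2/a^2$); for the second, $\frac{a^4}{b^2 t} \le \frac{a^2}{t}\to 0$ using $|a_t|<|b_t|$ and $\tilde a < 1/3$. So the conclusion $h_t'(b_t/a_t) = b_t t(1+o(1))$ stands, and with it the two $\Theta$-estimates. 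To tighten the write-up, either expand $P_t$ exactly before multiplying by $b_t/a_t$, or simply estimate the whole correction $h_t'(b/a) - b_t t$ relative to $b_t t$ directly rather than relative to $t$.
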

\begin{proof}
\eqref{1441}. We have the following:
\[
h_{t}(0)=\epsilon_{2}\quad,\quad\left|h_{t}'(0)\right|=\left|Q_{t}\right|=\Theta(\left|tb\right|)
\]
\[
\left|\lambda\right|\leq1\quad\Rightarrow\quad\left|h_{t}''(\lambda)\right|=\left|6\lambda+2P_{t}\right|=\Theta\left(\left|ta\right|\right)
\]
We conclude that $\left|h_{t}''(\lambda)\right|=\Theta\left(\left|ta\right|\right)=O\left(\left|h_{t}'(0)\right|\right)$
uniformly over $\left|\lambda\right|\leq1$ since $\left|a\right|<\left|b\right|$,
so that $\left|\frac{h_{t}''(\lambda)}{h_{t}'(0)}\right|$ is bounded.
Since $\left|h_{t}(0)\right|=1$ we have that $\left|\frac{h_{t}(0)}{h_{t}'(0)}\right|=\Theta\left(\frac{1}{\left|tb\right|}\right)\to0$

\eqref{1442}. The second claim is similar - we have
\[
h_{t}(\frac{b}{a})=\frac{\epsilon_{1}}{a^{3}},\quad\left|h'_{t}\left(\frac{b}{a}\right)\right|=\Theta\left(\left|bt\right|\right)\quad\Rightarrow\quad\left|\frac{h_{t}(b/a)}{h_{t}'(b/a)}\right|=\Theta\left(\frac{1}{\left|a^{3}bt\right|}\right)\to0
\]
Secondly, we have that
\begin{eqnarray*}
\left|\lambda\right|\leq1\;\Rightarrow\;\left|h_{t}''(b/a+\lambda)\right| & = & \left|6\left(\frac{b}{a}+\lambda\right)+2P_{t}\right|=\Theta(\left|ta\right|)\\
\left|\frac{h_{t}''\left(\frac{b}{a}+\lambda\right)}{h_{t}'\left(\frac{b}{a}\right)}\right| & = & \frac{\Theta\left(\left|ta\right|\right)}{\Theta\left(\left|tb\right|\right)}=O\left(\left|\frac{a}{b}\right|\right)=O\left(1\right)
\end{eqnarray*}
so that the expression above is bounded.
\end{proof}
\begin{corollary}
Assume \ref{factAB}. The roots and discriminant of $h_{t}(x)$ satisfy
\begin{eqnarray*}
\theta_{1} & = & 0-\frac{h_{t}(0)}{h_{t}'(0)}+o\left(\left|\frac{h_{t}(0)}{h_{t}'(0)}\right|\right)=\Theta\left(\frac{1}{\left|tb\right|}\right)\\
\theta_{2} & = & \frac{b}{a}-\frac{h_{t}(b/a)}{h_{t}'(b/a)}+o\left(\left|\frac{h_{t}(b/a)}{h_{t}'(b/a)}\right|\right)=\frac{b}{a}\left(1+\Theta\left(\frac{1}{a^{2}b^{2}t}\right)\right)\\
\theta_{3} & = & \Theta\left(\left|at\right|\right)\\
D_{f} & = & \Theta\left(t^{4}a^{2}b^{2}\right)
\end{eqnarray*}
\end{corollary}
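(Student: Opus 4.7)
The plan is to apply Theorem~\ref{thm:approx_roots} twice---once at $\alpha_t=0$ and once at $\alpha_t=b_t/a_t$---and then to recover $\theta_3$ via Vieta's formula. Both applications are set up by the preceding lemma, whose two parts are precisely the hypotheses of Theorem~\ref{thm:approx_roots} at these two base points. Part (1) of the lemma therefore produces a real root near $0$ of the form
\eq{
\theta_1 = -\tfrac{h_t(0)}{h_t'(0)} + o\!\pa{\tfrac{|h_t(0)|}{|h_t'(0)|}} = \Theta\!\pa{\tfrac{1}{|tb|}},
}
while part (2) produces a second real root
\eq{
\theta_2 = \tfrac{b}{a} - \tfrac{h_t(b/a)}{h_t'(b/a)} + o\!\pa{\tfrac{|h_t(b/a)|}{|h_t'(b/a)|}}.
}
The multiplicative form stated in the corollary is then obtained by factoring $b/a$ out of $\theta_2$: the additive error is of size $\Theta(1/(a^3 b t))$ by part (2), so dividing by $b/a$ turns it into the claimed $\Theta(1/(a^2 b^2 t))$.

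For $\theta_3$, since $h_t$ is monic with $\theta_1+\theta_2+\theta_3=-P_t$, and the remark following Assumption~\ref{factAB} gives $P_t = at + o(t)$, I would write $\theta_3 = -at - \theta_1 - \theta_2 + o(t)$. Here $\theta_1 = o(1)$, and $\theta_2 = b/a + O(1)$ with $b/a = o(t)$ (using $|a|\ge 1$ and $b=o(t)$). Hence $\theta_3 = -at + o(t)$, and since $|at|\ge|t|$ the error $o(t)$ is also $o(at)$, giving $\theta_3 = \Theta(|at|)$.

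For the discriminant I would use the standard identity $D_{h_t} = (\theta_1-\theta_2)^2(\theta_2-\theta_3)^2(\theta_3-\theta_1)^2$. Under the approximations above, $\theta_1-\theta_2 = -b/a + o(b/a)$, $\theta_2-\theta_3 = at + o(at)$, and $\theta_3-\theta_1 = -at + o(at)$ (the last two because both $b/a$ and $1/(bt)$ are $o(at)$, using $|a|,|b|\ge1$ and $b=o(t)$). Multiplying the squares gives $\Theta(b^2/a^2)\cdot\Theta(a^4 t^4) = \Theta(a^2 b^2 t^4)$, which is the claimed bound.

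The only step that demands real care is the passage from the additive $o(\cdot)$ errors supplied by Theorem~\ref{thm:approx_roots} to the multiplicative $\Theta$-statements in the corollary; in each difference $\theta_i-\theta_j$ one must check that the displayed leading term strictly dominates the error uniformly in $t$. This is precisely where the growth bounds $a^r=o(t)$ and $b=o(t)$ from the remark following Assumption~\ref{factAB}, together with $|b|>|a|\ge1$, are doing all of the work; everything else is a mechanical invocation of Theorem~\ref{thm:approx_roots} and the preceding lemma.
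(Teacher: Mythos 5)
Your proposal is correct and follows essentially the same route as the paper: both apply Theorem~\ref{thm:approx_roots} at the base points $0$ and $b/a$ (with the hypotheses verified by the preceding lemma), recover $\theta_3$ from Vieta's formula $\theta_1+\theta_2+\theta_3=-P_t$ using $\theta_1\to 0$, $\theta_2=\Theta(b/a)=o(t)$, and then feed the resulting scale separation $\theta_1=o(\theta_2)$, $\theta_2=o(\theta_3)$ into the discriminant identity. Your extra bookkeeping about when the leading term dominates the $o(\cdot)$ error just makes explicit what the paper leaves implicit.
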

\begin{proof}
The approximations for $\theta_{1},\theta_{2}$ follow from the previous
lemma and Theorem~\ref{thm:approx_roots}. The third root satisfies
\begin{eqnarray*}
\theta_{3} & = & -P_{t}-\theta_{1}-\theta_{2}.
\end{eqnarray*}
Note that $\theta_{1}\to0$ while $\theta_{2}=\Theta\left(\frac{b}{a}\right)=o\left(t\right)$
so that $\left|\theta_{3}\right|=\Theta\left(\left|P_{t}\right|\right)=\Theta\left(\left|at\right|\right)$.
Since $\theta_{1}=o(\theta_{2})$ and $\theta_{2}=o(\theta_{3})$
we get that the discriminant satisfies
\begin{eqnarray*}
D_{t} & = & \left(\theta_{1}-\theta_{2}\right)^{2}\left(\theta_{2}-\theta_{3}\right)^{2}\left(\theta_{3}-\theta_{1}\right)^{2}=\Theta\left(t^{4}a^{2}b^{2}\right).
\end{eqnarray*}

\end{proof}
We are now ready to show that $\theta,a\theta-b$ form a fundamental
set, compute the shape of the unit lattice and show that there is
a partial escape of mass. Recall Definition~\ref{def:tight} and Theorem~\ref{thm:partial escape}.
\begin{theorem}
\label{thm:tight_mass_escape}Assume \ref{factAB}. For $h_{t}$ as
above let $\theta_{t}$ be one of its roots and let $M_{t}=\bZ\left[\theta_{t}\right],\;L_{t}$
be the corresponding order and unimodular lattice. Then for $\left|t\right|$
big enough we have:
\begin{enumerate}
\item\label{tme1} The units $\left\{ \theta_{t},a_{t}\theta_{t}-b_{t}\right\} $ are
a set of fundamental units.
\item\label{tms2} There exist simplex sets for $M_t$ which are  $(R,r)$-tight for all $0\leq r<1$ which satisfies
$\frac{2}{3}\left(1-r\right)+\left(\frac{1}{3}-r\right)\left(\tilde{a}+\tilde{b}\right)>0.$
In particular, the family of compact $A$-orbits $AL_t$ exhibits partial escape of mass (by choosing
$r=\frac{1}{3}$) and there is a full escape if $\tilde{a},\tilde{b}=0$
(for example, for $a_{t},b_{t}$ bounded).
\end{enumerate}
\end{theorem}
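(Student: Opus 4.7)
The plan is to extract both conclusions from the root approximations for $h_t$ in the preceding corollary, which determine the logarithmic embeddings of the two candidate units up to error terms of size $o(\log|t|)$. Writing $\sigma_1,\sigma_2,\sigma_3$ for the real embeddings ordered so that $\sigma_i(\theta_t)=\theta_i$ in the notation of that corollary, a direct substitution gives
\begin{align*}
\psi(\theta_t) &= \log|t|\cdot\alpha + o(\log|t|),\quad \alpha:=(-1-\tilde{b},\,\tilde{b}-\tilde{a},\,1+\tilde{a}),\\
\psi(a_t\theta_t-b_t) &= \log|t|\cdot\beta + o(\log|t|),\quad \beta:=(\tilde{b},\,-1-\tilde{b}-2\tilde{a},\,1+2\tilde{a}),
\end{align*}
where the middle coordinate of $\beta$ uses $a_t\theta_2-b_t=\Theta(1/(a_t^2b_tt))$ coming from the approximation $\theta_2\approx b_t/a_t$, and the outer coordinates use $a_t\theta_1-b_t\approx -b_t$ and $a_t\theta_3\approx a_t^2 t$. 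Both sums of coordinates vanish, as they must.

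For part (\ref{tme1}), I would compute the relative regulator $R_t'$ as the $2\times 2$ minor of $(\psi(\theta_t),\psi(a_t\theta_t-b_t))$ taken on the first two rows. A short expansion yields
\eq{R_t' = \bigl(1+2\tilde{a}+2\tilde{b}+3\tilde{a}\tilde{b}\bigr)\log^2|t| + o(\log^2|t|),}
while the corollary gives $\log(D_t/4)=(4+2\tilde{a}+2\tilde{b})\log|t|+o(\log|t|)$. Clearing denominators, the inequality $R_t'/\log^2(D_t/4)<1/8$ reduces to $4\tilde{a}\tilde{b}<2+\tilde{a}^2+\tilde{b}^2$, which under Assumption~\ref{factAB} is immediate since $\tilde{a}\tilde{b}<1/3$. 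Corollary~\ref{fundamental_units} then gives that $\{\theta_t,a_t\theta_t-b_t\}$ is a fundamental set for $|t|$ large.

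For part (\ref{tms2}) the key step is to choose the right simplex set. The naive triple $\{\psi(\theta_t),\,\psi(a_t\theta_t-b_t),\,-\psi(\theta_t(a_t\theta_t-b_t))\}$ turns out to give $\lceil W\rceil\sim(1+\tfrac{5}{3}\tilde{a})\log|t|$, which is too large. Instead I would take
\eq{\Phi_t:=\bigl\{\psi(\theta_t),\,\psi\bigl((a_t\theta_t-b_t)/\theta_t\bigr),\,\psi\bigl((a_t\theta_t-b_t)^{-1}\bigr)\bigr\},}
the analogue of the symmetric simplex used in Theorem~\ref{thm:fixed_parameters}. It sums to zero, spans $\bR^3_0$, and lies in $\psi(\cO_{M_t}^\times)$ because each entry is the image of a unit. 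Using the identity that the vertices of the hexagonal Voronoi cell of a simplex $\{e_1,e_2,e_3\}$ with $\sum e_i=0$ are $\pm\tfrac{1}{3}(e_i-e_j)$, it suffices to compute the three pairwise differences $2\alpha-\beta$, $2\beta-\alpha$, $\alpha+\beta$; their $\ell^\infty$-norms work out to $2+3\tilde{b}$, $2+3\tilde{a}+3\tilde{b}$ and $2+3\tilde{a}$ respectively, with the middle one being the maximum. Hence
\eq{\lceil W_{\Phi_t}\rceil = \bigl(\tfrac{2}{3}+\tilde{a}+\tilde{b}\bigr)\log|t| + o(\log|t|).}

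Combining this with the classical bound $ht(L_t)\ge D_t^{1/6}/\sqrt{3}$, which gives $\log ht(L_t)\ge \tfrac{2+\tilde{a}+\tilde{b}}{3}\log|t|+O(1)$, the $(R,r)$-tightness inequality $\exp(r\lceil W_{\Phi_t}\rceil)\le R\cdot ht(L_t)$ becomes (after taking logarithms and comparing leading coefficients of $\log|t|$) precisely
\eq{r\bigl(\tfrac{2}{3}+\tilde{a}+\tilde{b}\bigr)<\tfrac{2+\tilde{a}+\tilde{b}}{3},}
equivalently $\tfrac{2}{3}(1-r)+(\tfrac{1}{3}-r)(\tilde{a}+\tilde{b})>0$. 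Whenever this strict inequality holds, any sufficiently large constant $R$ makes $\Phi_t$ an $(R,r)$-tight simplex for all large $|t|$, and Theorem~\ref{thm:partial escape} yields $r^2$-escape of mass. The stated consequences are then immediate: $r=\tfrac{1}{3}$ always satisfies the inequality, and when $\tilde{a}=\tilde{b}=0$ it collapses to $r<1$, forcing full escape. The obstacle I anticipate is precisely the symmetrization step---using $\{\theta_t,a_t\theta_t-b_t\}$ as a $\bZ$-basis yields an $\lceil W\rceil$ too large to reach the claimed range of admissible $r$, and one must notice that replacing the second generator by the unit $(a_t\theta_t-b_t)/\theta_t$ balances the three differences and produces the sharp bound.
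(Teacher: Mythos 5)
Your proof is correct and follows essentially the same approach as the paper. For part (1) you reach the same inequality after clearing denominators (the paper writes the discrepancy as $4(\tilde{b}-\tilde{a})^2+8(1-\tilde{a}\tilde{b})$, which is exactly your $8(2+\tilde{a}^2+\tilde{b}^2-4\tilde{a}\tilde{b})$), and for part (2) you use the identical simplex set $\{\theta_t,\ \theta_t^{-1}(a_t\theta_t-b_t),\ (a_t\theta_t-b_t)^{-1}\}$, arriving at the same value $\lceil W_{\Phi_t}\rceil=(\tfrac{2}{3}+\tilde{a}+\tilde{b})\log|t|+o(\log|t|)$ and the same tightness condition; the only cosmetic difference is that you compute the hexagon vertices via the identity $W_\Phi=\{\pm\tfrac{1}{3}(e_i-e_j)\}$ and maximize $\ell^\infty$-norms of pairwise differences, whereas the paper maximizes each coordinate separately over the six weight-assignments, but these are equivalent and give the same leading coefficient.
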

\begin{proof}
Recall that we embedd the units into $\bR^{3}$ by sending a unit
$\alpha$ to $\left(\log\left|\sigma_{i}\left(\alpha\right)\right|\right)_{1}^{3}$
where $\sigma_{i}:\bZ\left[\theta_{t}\right]\to\bR$ are the three
real embeddings. Thus, using the previous corollary for approximating
the units $\theta_{t},a\theta_{t}-b$, we get that
\begin{eqnarray*}
\left(\log\left|\sigma_{i}\left(\theta_{t}\right)\right|\right)_{1}^{3} & = & \left(-\log\left|tb\right|,\;\log\left|\frac{b}{a}\right|,\;\log\left|at\right|\right)+O\left(1\right)\\
\left(\log\left|a\sigma_{i}\left(\theta_{t}\right)-b\right|\right)_{1}^{3} & = & \left(\log\left|b\right|,-\log\left|a^{2}bt\right|,\log\left|a^{2}t\right|\right)+O\left(1\right)
\end{eqnarray*}

We prove~\eqref{tme1}. The relative regulator $R_{i}'$ for these units is
\begin{align*}
\det&\left(
\smallmat{
-\log\left|tb\right| & \log\left|\frac{b}{a}\right|\\
\log\left|b\right| & -\log\left|a^{2}bt\right|
}
+O\left(1\right)
\right)\\
=&\log\left|tb\right|\log\left|ta^{2}b\right|-\log\left|b\right|\log\left|\frac{b}{a}\right|+O\left(\log\left(t\right)\right)
\end{align*}
so that
\begin{align*}
\frac{R_{i}'}{\log^{2}\left(D_{i}\right)} & =  \frac{\log\left|tb\right|\log\left|ta^{2}b\right|-\log\left|b\right|\log\left|\frac{b}{a}\right|+O\left(\log\left(t\right)\right)}{\log^{2}\left(t^{4}a^{2}b^{2}\right)+O\left(\log\left(t\right)\right)}\\
&\longrightarrow\frac{\left(1+\tilde{b}\right)\left(1+2\tilde{a}+\tilde{b}\right)-\tilde{b}\left(\tilde{b}-\tilde{a}\right)}{4\left(2+\tilde{a}+\tilde{b}\right)^{2}}.
\end{align*}
We claim that for $0\leq\tilde{a},\tilde{b}<1$, the expression above
is always smaller than $\frac{1}{8}$, and therefore the units $\theta_{t},a\theta_{t}-b$
form a fundamental set of units (see Corollary~\ref{fundamental_units}).
Indeed, the expression is strictly less than $\frac{1}{8}$ if and
only if
\begin{eqnarray*}
0 & \overset{?}{\leq} & 4\left(2+\tilde{a}+\tilde{b}\right)^{2}-8\left[\left(1+\tilde{b}\right)\left(1+2\tilde{a}+\tilde{b}\right)-\tilde{b}\left(\tilde{b}-\tilde{a}\right)\right]\\
 & = & 4\left(4+\tilde{a}^{2}+\tilde{b}^{2}+4\tilde{a}+4\tilde{b}+2\tilde{a}\tilde{b}\right)-8\left[1+2\tilde{b}+2\tilde{a}+3\tilde{a}\tilde{b}\right]\\
 & = & 8+4\tilde{a}^{2}+4\tilde{b}^{2}-16\tilde{a}\tilde{b}=4\left(\tilde{b}-\tilde{a}\right)^{2}+8\left(1-\tilde{a}\tilde{b}\right)
\end{eqnarray*}
This is clearly true if $0\leq\tilde{a},\tilde{b}\leq1$ and the equality
holds only if $\tilde{a}=\tilde{b}=1$.

We prove~\eqref{tms2}. Instead of working with $\theta_{t},a\theta_{t}-b$, we shall work
with the simplex set $\Phi=\left\{ \theta_{t},\theta_{t}^{-1}\left(a\theta_{t}-b\right),\left(a\theta_{t}-b\right)^{-1}\right\} $.
These units correspond to
\begin{align*}
&\left(\log\left|\sigma_{i}\left(\theta^{-1}\left(a\theta-b\right)\right)\right|\right)_{1}^{3} \\
&= -\left(-\log\left|tb\right|,\;\log\left|\frac{b}{a}\right|,\;\log\left|at\right|\right)+\left(\log\left|b\right|,\log\left|\frac{1}{a^{2}bt}\right|,\log\left|a^{2}t\right|\right)+O\left(1\right) \\
& =  \left(\log\left|tb^{2}\right|,-\log\left|ab^{2}t\right|,\log\left|a\right|\right)+O\left(1\right).\\
&\left(\log\left|\sigma_{i}\left(a\theta-b\right)^{-1}\right|\right)_{1}^{3} =  \left(-\log\left|b\right|,\log\left|a^{2}bt\right|,-\log\left|a^{2}t\right|\right)+O\left(1\right).
\end{align*}
The vertices of the fundamental domain $\on{conv}\left(W_{\Phi}\right)$
correspond to 
$$\theta^{\lambda_{1}}\left(a-b\theta^{-1}\right)^{\lambda_{2}}(\left(a\theta-b\right)^{-1})^{\lambda_{3}}$$
where $\left\{ \lambda_{1},\lambda_{2},\lambda_{3}\right\} =\left\{ 0,\frac{1}{3},\frac{2}{3}\right\} $
(see Definition~\ref{simplex_set}). From these vertices we need to find the
maximum of the coordinates. For example, on the first coordinate we
have $-\log\left|tb\right|,\;\log\left|tb^{2}\right|,\;-\log\left|b\right|$.
To get a maximum, we clearly need to assign the $\frac{2}{3}$ power
to $\log\left|tb^{2}\right|$ (which is positive) and $\frac{1}{3}$
to $-\log\left|b\right|$ (which is bigger than $-\log\left|tb\right|$),
hence obtaining $\log\left(\left|tb^{2}\right|^{2/3}\cdot\left|b\right|^{-1/3}\right)=\log\left(\left|t\right|^{\frac{2}{3}}\left|b\right|\right)$.
A similar computation for the second and third coordinate will produce
$\log\left(\left|a^{2}bt\right|^{\frac{2}{3}}\left|\frac{b}{a}\right|^{\frac{1}{3}}\right)=\log\left(\left|t\right|^{\frac{2}{3}}\left|ab\right|\right)$
and $\log\left(\left|at\right|^{\frac{2}{3}}\left|a\right|^{\frac{1}{3}}\right)=\log\left(\left|t\right|^{\frac{2}{3}}\left|a\right|\right)$.
It follows that the maximum is $\left\lceil W_{\Phi}\right\rceil =\log\left(\left|t\right|^{\frac{2}{3}}\left|ab\right|\right)+O\left(1\right)$.\\
The height of the unimodular lattice is controled by the size of $D^{-1/6}\left(1,1,1\right)$,
so that $ht\left(L_{i}\right)=\Theta\left(t^{\frac{2}{3}}\left|ab\right|^{\frac{1}{3}}\right)$.
The $(R,r)$-tightness condition is
\[
\exp\left(r\left\lceil \tilde{\Phi}\right\rceil \right)\leq R\cdot ht\left(L_{i}\right)\quad\iff\quad-\log\left(R\right)\leq\log\left|ht\left(L_{i}\right)\right|-r\left\lceil \tilde{\Phi}\right\rceil
\]
so it is enough to show that $\log\left|ht\left(L_{i}\right)\right|-r\left\lceil \tilde{\Phi}\right\rceil \to\infty$.
\begin{align*}
\log\left|ht\left(L_{i}\right)\right|-r\left\lceil \tilde{\Phi}\right\rceil &=\log\left(t^{\frac{2}{3}}\left|ab\right|^{\frac{1}{3}}\right)-r\log\left(\left|t\right|^{\frac{2}{3}}\left|ab\right|\right)+O\left(1\right)\\
&=\log\left|t\right|\left(\frac{2}{3}\left(1-r\right)+\left(\frac{1}{3}-r\right)\frac{\log\left|ab\right|}{\log\left|t\right|}\right)+O\left(1\right)
\end{align*}
Thus, by taking $\left|t\right|\to\infty$, we see that the condition is equivalent
to
\[
\frac{2}{3}\left(1-r\right)+\left(\frac{1}{3}-r\right)\left(\tilde{a}+\tilde{b}\right)>0.
\]
We immediately see that if $r\leq\frac{1}{3}$, then this condition
is always satisfied and therefore we always have partial escape of
mass. On the other extreme, if $\tilde{a},\tilde{b}=0$ (for example
if $a_{t},b_{t}$ are bounded), then the inequality is true for all
$r<1$, so that we have a full escape of mass.
\end{proof}
\begin{theorem}
\label{thm:compute_shapes}Assume \ref{factAB}. For $h_{t}$ as above
let $\theta_{t}$ be one of its roots and let $M_{t}=\bZ\left[\theta_{t}\right],\;L_{t}$ be
the corresponding order and unimodular lattice. Let $\left[z_{t}\right]\in SL_{2}(\bZ)\backslash\mathbb{H}$
be the shape of the unit lattice $\psi\left(\bZ\left[\theta_{t}\right]^{\times}\right)\subset
\bR_{0}^{3}$. Then the sequence $\left[z_{t}\right]$ converges
to some point $\left[z\right]\in \SL_{2}\left(\bZ\right)\backslash\mathbb{H}$
where $z\in\mathbb{H}$ satisfies the following:

\begin{itemize}
\item $z(\tilde{a},\tilde{b})=\frac{1+2\tilde{a}+\left(1+\tilde{b}+2\tilde{a}\right)\omega}{1+\tilde{a}+\left(\tilde{a}-\tilde{b}\right)\omega}$
where $\omega=\frac{-1+\sqrt{3}i}{2}$ is a primitive root of unity
of order 3.
\item If $\tilde{a}=\tilde{b}=0$, then $z=1+\omega$, or equivalently the
lattice shape is the regular triangles lattice.
\item If $\tilde{a}=0$ then $\left|z\right|=1$.
\item If $\tilde{a}=\tilde{b}$, then $Re(z)=\frac{1}{2}$.
\end{itemize}

Moreover, if $0<\tilde{a}<\tilde{b}$ are small enough (say, $<\frac{1}{10}$),
then $z$ is in the interior of the standard fundamental domain of
$\SL_{2}(\bZ)$ in $\mathbb{H}$.

\end{theorem}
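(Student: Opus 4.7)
The strategy is to read off the shape of the unit lattice from the basis $\{\psi(\theta_t),\psi(a_t\theta_t-b_t)\}$ in $\bR_0^3$, after identifying $\bR_0^3\cong\bC$ via a fixed similarity, and then to take the limit $|t|\to\infty$.

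First, I would invoke Theorem~\ref{thm:tight_mass_escape}, which for $|t|$ large asserts that $\{\theta_t,a_t\theta_t-b_t\}$ is a fundamental system, so the unit lattice modulo torsion is generated by their images under $\psi$. The proof of that theorem already supplies the expansions
\begin{align*}
\psi(\theta_t) &= \bigl(-\log|tb_t|,\ \log|b_t/a_t|,\ \log|a_t t|\bigr)+O(1),\\
\psi(a_t\theta_t-b_t) &= \bigl(\log|b_t|,\ -\log|a_t^2 b_t t|,\ \log|a_t^2 t|\bigr)+O(1).
\end{align*}
To transport this to $\bC$, I would use the similarity $\Phi:\bR_0^3\to\bC$ given by $\Phi(x_1,x_2,x_3)=x_1+x_2\omega+x_3\omega^2$. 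The relation $1+\omega+\omega^2=0$ makes $\Phi|_{\bR_0^3}$ into a well-defined $\bR$-linear isomorphism, and a short computation shows $|\Phi(v)|^2=(3/2)\|v\|^2$ and that $\Phi$ is orientation-preserving with respect to the cyclic action on coordinates.

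With this identification, the shape is represented (swapping the two basis vectors if needed to land in $\bH$) by $z_t=\Phi(\psi(a_t\theta_t-b_t))/\Phi(\psi(\theta_t))$. Rescaling each vector by $1/\log|t|$ leaves the shape unchanged, and the rescaled coordinates converge to explicit $\bR$-linear functions of $\tilde a,\tilde b$. Collecting real and $\omega$-parts via $\omega^2=-1-\omega$, the limit of $z_t$ equals, after simplification, the stated
\[
z(\tilde a,\tilde b)=\frac{1+2\tilde a+(1+\tilde b+2\tilde a)\omega}{1+\tilde a+(\tilde a-\tilde b)\omega},
\]
since the $O(1)$ errors vanish against $\log|t|\to\infty$. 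The three named specializations then follow by substitution: $(\tilde a,\tilde b)=(0,0)$ yields $z=1+\omega$; for $\tilde a=0$ the identity $|\alpha+\beta\omega|^2=\alpha^2-\alpha\beta+\beta^2$ applied to numerator and denominator gives $|z|^2=(1+\tilde b+\tilde b^2)/(1+\tilde b+\tilde b^2)=1$; and for $\tilde a=\tilde b$ the denominator collapses to the real number $1+\tilde a$, making $\re(z)=\bigl((1+2\tilde a)-(1+3\tilde a)/2\bigr)/(1+\tilde a)=1/2$.

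For the interior claim, I would directly compute the separating functions. Writing $D=1+\tilde a+(\tilde a-\tilde b)\omega$ for the denominator, elementary but careful manipulations yield
\begin{align*}
|z|^2-1 &=\frac{3\tilde a(1+\tilde a+\tilde b)}{|D|^2},\\
\tfrac{1}{2}-\re(z) &=\frac{(3/2)(\tilde b-\tilde a)(1+\tilde a+\tilde b)}{|D|^2},
\end{align*}
so both $|z|>1$ and $\re(z)<1/2$ hold strictly when $0<\tilde a<\tilde b$. The remaining condition $\re(z)>-1/2$ needed for interiority follows from continuity at $(\tilde a,\tilde b)=(0,0)$, where $\re(z)=1/2$, together with the smallness hypothesis $\tilde a,\tilde b<1/10$ which keeps $\re(z)$ close to $1/2$. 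The main obstacle in executing this plan is essentially bookkeeping: one must orient $\Phi$ carefully so that the limit lands on the precise representative written in the statement (rather than on an $\SL_2(\bZ)$-translate of it), and then verify the two factorizations displayed above. Both steps are elementary but reward attention.
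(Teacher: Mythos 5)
Your proposal is correct and follows essentially the same route as the paper: extract the asymptotics of $\psi(\theta_t)$ and $\psi(a_t\theta_t-b_t)$ from the proof of Theorem~\ref{thm:tight_mass_escape}, transport to $\bC$ by a fixed similarity sending the cyclic action to multiplication by $\omega$, and take the ratio. Your explicit map $\Phi(x_1,x_2,x_3)=x_1+x_2\omega+x_3\omega^2$ differs from the paper's normalization (which is pinned down by $(-1,0,1)\mapsto 1$, $(0,-1,1)\mapsto 1+\omega$) only by an overall complex scalar $(-2-\omega)$, so the resulting shape and limit formula agree; and your closed forms $|z|^2-1=3\tilde a(1+\tilde a+\tilde b)/|D|^2$ and $\tfrac12-\re z=\tfrac32(\tilde b-\tilde a)(1+\tilde a+\tilde b)/|D|^2$ check out, making explicit the interiority argument that the paper leaves as an exercise.
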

\begin{proof}
From the previous theorem, the unit lattice is generated by the elements
\begin{align*}
&\smallmat{
\log\left|\sigma_{i}\left(\theta_{t}\right)\right|\\
\log\left|a\sigma_{i}\left(\theta_{t}\right)-b\right|}_{1}^{3} \\
&=  \log\left(t\right)
\smallmat{
-1 & 0 & 1\\
0 & -1 & 1
}
+\log\left|b\right|
\smallmat{
-1 & 1 & 0\\
1 & -1 & 0
}
+\log\left|a\right|
\smallmat{
0 & -1 & 1\\
0 & -2 & 2
}
+O\left(1\right)\\
 & =  \log\left(t\right)\left[
\smallmat{
-1 & 0 & 1\\
0 & -1 & 1
}
+\frac{\log\left|b\right|}{\log\left|t\right|}
\smallmat{
-1 & 1 & 0\\
1 & -1 & 0
}
+\frac{\log\left|a\right|}{\log\left|t\right|}
\smallmat{
0 & -1 & 1\\
0 & -2 & 2
}
+O\left(\frac{1}{\log\left|t\right|}\right)\right]
\end{align*}
Note that the vectors $\left(\begin{array}{ccc}
-1 & 0 & 1\end{array}\right)$ and $\left(\begin{array}{ccc}
0 & -1 & 1\end{array}\right)$ have the same norm and have angle $\frac{\pi}{3}$ between them,
so we can define a similarity from $\bR^3_0$ to $\bC$ by sending them to $1,1+\omega$
respectively. We thus get lattice in $\bC$ having the same shape which is generated (in the limit as $\av{t}\to\infty$) by
\begin{eqnarray*}
v & = & 1-\tilde{b}\omega+\tilde{a}(1+\omega)=1+\tilde{a}-\left(\tilde{b}-\tilde{a}\right)\omega\\
u & = & 1+\omega+\tilde{b}\omega+2\tilde{a}(1+\omega)=1+2\tilde{a}+\left(1+\tilde{b}+2\tilde{a}\right)\omega.
\end{eqnarray*}
This lattice has the same shape as the one generated by $1,\frac{u}{v}$ which is the claim in the first bullet of the theorem.

If $\tilde{a}=0$, then $\frac{u}{v}=\frac{1+\left(1+\tilde{b}\right)\omega}{1-\tilde{b}\omega}$
and then $\av{\frac{u}{v}}^{2}=\frac{1-\left(1+\tilde{b}\right)+\left(1+\tilde{b}\right)^{2}}{1+\tilde{b}+\tilde{b}^{2}}=1$.
Similarly, if $\tilde{a}=\tilde{b}$, then $\frac{u}{v}=\frac{1+2\tilde{a}+\left(1+3\tilde{a}\right)\omega}{1+\tilde{a}}$
so that
\[
Re\left(\frac{u}{v}\right)=\left(\frac{1+2\tilde{a}}{1+\tilde{a}}\right)+\left(\frac{1+3\tilde{a}}{1+\tilde{a}}\right)\left(\frac{-1}{2}\right)=\frac{1}{2}.
\]
We leave it as an exercise to show that for small $0\leq\tilde{a},\tilde{b}$
the number $\frac{u}{v}$ is inside the standard fundamental domain
and is strictly inside if $0<\tilde{a}<\tilde{b}$.

\end{proof}
We now have all we need in order to prove Theorem \ref{thm:curves}.
\begin{proof}[Proof of Theorem \ref{thm:curves}]
Let $(a_t,b_t)$ be a mutually cubic root sequence and suppose that the limits $\tilde{a}=\limfi t{\infty}\frac{\log\left|a_t\right|}{\log\left|t\right|}$ and $\tilde{b}=\limfi t{\infty}\frac{\log\left|b_t\right|}{\log\left|t\right|}$ exist and satisfy $0\leq \tilde{a}\leq \tilde{b}$.
Given $p,q\in \bN$, we reindex the sequence $(a_t,b_t,t)$ and consider the sequence $(a_{t^p},b_{t^p},t^q)$. Obviously, this is also a mutually cubic root sequence, and the corresponding limits are $\frac{p}{q}\tilde{a}$ and $\frac{p}{q}\tilde{b}$.  In particular, taking $r=\frac{p}{q} \in \left[0, \min(\frac{1}{3\tilde{a}},\frac{1}{\tilde{b}}) \right) \cap\mathbb{Q}$ we get a sequence that satisfies Assumption \ref{factAB}, hence we get the limit point $z(r\tilde{a},r\tilde{b})=\frac{1+2r\tilde{a}+\left(1+r\tilde{b}+2r\tilde{a}\right)\omega}{1+r\tilde{a}+\left(r\tilde{a}-r\tilde{b}\right)\omega}$ in $\overline {\Omega}$, where $\Omega$ was defined to be the set of shapes of unit lattices. Finally, using the continuity of $z(x,y)$ it follows that $z(r\tilde{a},r\tilde{b})\in \overline{\Omega}$ for all $r \in \left[0, \min(\frac{1}{3\tilde{a}},\frac{1}{\tilde{b}}) \right]$.
\end{proof}

 \subsection{\label{many_examples}Finding the limits of $ \limfi t{\infty}\frac{\log\left|a_{t}\right|}{\log\left|b_{t}\right|}$}

As Theorem~\ref{thm:curves} shows, once we are given a mutually cubic root sequence $(a_t,b_t)$, the parameter that controls the curve is the ratio $\frac{\tilde{a}}{\tilde{b}}= \limfi t{\infty}\frac{\log\left|a_{t}\right|}{\log\left|b_{t}\right|}$ and we are left with the task of finding such sequences inducing different ratios. Let us denote by $\Lambda$ the set of such limits inside $P^1(\bR)$ (ignoring the case where $\tilde{a}=\tilde{b}=0)$, so that any $0\leq \lambda \leq 1$ in $\Lambda$ corresponds to a curve in $\overline{\Omega}$.

Before we turn to study the set $\Lambda$, let us concentrate on the case where $\tilde{a}=0$. Since $b_t \mid a_t^3 -1$, unless $a_t=1$ for all $t$ big enough, we will also get that $\tilde{b}\leq 3\tilde{a}=0$, which by Theorem \ref{thm:compute_shapes} implies that the shapes of the unit lattices converge to the regular triangle lattices (which is like the case discussed in Section \ref{Full-escape-of}).

Assuming now that $a_t=1$ for all $t$, Theorem~\ref{thm:one_unit} tell us that
\[
f_{t}\left(x\right)=\left(x^{3}-b_{t}x^{2}+1\right)+t\cdot x\left(x-b_{t}\right)=x\left(x-b_{t}\right)\left(x+t\right)+1.
\]
This type of polynomials was already studied by Cusick in \cite{cusick_regulator_1991}
where he showed that the limit points of the shapes of unit lattices is
on $\left|z\right|=1$ in the hyperbolic plane. This follows readily
from our computations in the previous section if $b_{t}=o\left(t\right)$ and in addition
we know that there is always a partial escape of mass. Moreover,
when $b_{t}\sim t^{\alpha},\;\alpha<1$, the cosine of the angle of
the corresponding point on the hyperbolic plane is $\frac{1-2\alpha-2\alpha^{2}}{2+2\alpha+2\alpha^{2}}$.
In particular the angle is $\frac{\pi}{3}$ when $\alpha=0$ and it
increases up until $\frac{2\pi}{3}$ when $\alpha\to1^{-}$. In case
that $b_{t}=Bt$ for some constant $B$, our analysis doesn't
hold. This case falls into the settings studied in ~\cite{Shapira} where it
was shown that there is a full escape of mass and that the unit lattices
shapes converge to the regular triangles lattice.

Let us continue to the general case of an element in $\Lambda$.  We have already seen several examples in Example~\ref{theta_is_a_unit}
of mutually cubic root sequences producing the limits $0,\infty,\frac{1}{2},\frac{1}{3},\frac{2}{3}\in\Lambda$,
so that $\Lambda$ is not empty. On the other hand, if $a_{t},b_{t}\neq\pm1$
and $b_{t}^{3}\equiv_{a_{t}}1$, then $\left|a_{t}\right|\mid\left|b_{t}^{3}-1\right|$
so that $\limfi t{\infty}\frac{\log\left|a_{t}\right|}{\log\left|b_{t}\right|}\leq\limfi t{\infty}\frac{\log\left|b_{t}^{3}-1\right|}{\log\left|b_{t}\right|}\to3$
and reversal of the roles of $a_{t},b_{t}$ produces a lower bound
$\frac{1}{3}$, so that $\Lambda\subseteq\left[\frac{1}{3},3\right]\cup\{0,\infty \}$.
\begin{lemma}
\label{lem:operations_on_lambda}We have the following:
\begin{enumerate}
\item\label{2001} The set $\Lambda$ is closed under taking inverses.
\item\label{2002} If $s\in\Lambda$, then $3-s\in\Lambda$.
\end{enumerate}
\end{lemma}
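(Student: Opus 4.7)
Part (\ref{2001}) is immediate from the definition: the relations $a^{3}\equiv_{b}1$ and $b^{3}\equiv_{a}1$ cutting out a mutually cubic root pair are symmetric in the two entries, so whenever $(a_t,b_t)$ is a mutually cubic root sequence so is $(b_t,a_t)$. This interchanges $\tilde a$ and $\tilde b$ and thereby sends $s=\tilde a/\tilde b$ to $1/s$, proving closure under inverses in $\bP(\bR)$.

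For part (\ref{2002}) my plan is to exhibit a new mutually cubic root sequence realising $1/(3-s)$, after which part (\ref{2001}) yields $3-s\in\Lambda$. Given a mutually cubic root sequence $(a_t,b_t)$ realising $s$, set
\[
c_t \;:=\; \frac{b_t^{3}-1}{a_t}\in\bZ,
\]
which is an honest integer because $b_{t}^{3}\equiv 1\pmod{a_t}$. I claim that $(b_t,-c_t)$ is again a mutually cubic root sequence. The first congruence $b_t^{3}\equiv 1\pmod{c_t}$ is immediate from $a_t c_t=b_t^{3}-1$. For the second, note first that $\gcd(a_t,b_t)=1$: any common prime would divide both $b_t$ and $b_t^{3}-1=a_t c_t$, which is impossible. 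Reducing $c_t=(b_t^{3}-1)/a_t$ modulo $b_t$ therefore gives $c_t\equiv -a_t^{-1}\pmod{b_t}$, whence $c_t^{3}\equiv -a_t^{-3}\equiv -1\pmod{b_t}$ using the hypothesis $a_t^{3}\equiv 1\pmod{b_t}$. The sign flip is precisely what turns this into $(-c_t)^{3}\equiv 1\pmod{b_t}$, as required.

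It remains to compute the ratio for the new sequence and dispose of the degenerate values. Provided $|b_t|\to\infty$ (which is automatic whenever $\tilde b>0$), one has $\log|c_t|/\log t = (3\log|b_t|-\log|a_t|)/\log t+o(1)\to 3\tilde b-\tilde a$, so the new limits are $\tilde a'=\tilde b$ and $\tilde b'=3\tilde b-\tilde a$, giving ratio $\tilde b/(3\tilde b-\tilde a)=1/(3-s)\in\Lambda$; invoking part (\ref{2001}) concludes $3-s\in\Lambda$. The main bookkeeping obstacle is the degenerate values: the case $s=\infty$ (i.e.\ $\tilde b=0$) makes $3-s=\infty\in\Lambda$ a tautology for the same sequence, while the case $s=3$ forces $\tilde b'=0$ so that $c_t$ may fail to grow; this case can instead be covered directly by the explicit sequence $(1,t)$ from Example~\ref{theta_is_a_unit}, which already realises $0=3-3\in\Lambda$. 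The only substantive content of the lemma is therefore the algebraic identity behind the construction of $c_t$ and the sign flip needed to fix the sign of $c_t^{3}\pmod{b_t}$.
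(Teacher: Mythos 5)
Your proof is correct and essentially matches the paper's: both arguments construct the new integer $(1-b_t^3)/a_t$ (you take its negative, $c_t=(b_t^3-1)/a_t$), and verify the congruence modulo $b_t$ using the hypothesis $a_t^3\equiv_{b_t}1$ — you do this by inverting $a_t$ mod $b_t$ and cubing, the paper by writing $c^3\equiv_b (ca)^3=(1-b^3)^3\equiv_b 1$; these are the same computation. The only genuine difference is cosmetic: you order the new pair so as to realise the ratio $1/(3-s)$ and then appeal to part~(\ref{2001}), whereas the paper orders it as $(c,b)$ and reads off $\tilde c/\tilde b=3-s$ directly. Your worry about the case $s=3$ is also unnecessary: when $\tilde b'=3\tilde b-\tilde a=0$ one still has $\tilde a'=\tilde b>0$, so $(\tilde a',\tilde b')\neq(0,0)$ and the ratio $\tilde a'/\tilde b'=\infty=1/(3-3)$ is a legitimate point of $\bP^1(\bR)$, so the fallback to the sequence $(1,t)$ is harmless but not needed. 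Likewise, the degenerate case $\tilde b=0$ is handled the same way in both: it forces $s=\infty$ (since $\tilde a\le 3\tilde b$ otherwise) and then $3-s=\infty$ is already in $\Lambda$.
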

\begin{proof}
\eqref{2001}. Clearly, any mutually cubic root sequence $\left(a_{t},b_{t}\right)$ produces another such
sequence $\left(b_{t},a_{t}\right)$ so that $s\in\Lambda$ if and
only if $s^{-1}\in\Lambda$. Note that on the level of units, this
is nothing more than considering the fundamental units $\left\{ \theta^{-1},-\theta^{-1}\left(a\theta-b\right)\right\} $
instead of $\left\{ \theta,a\theta-b\right\} $.

\eqref{2002}. Let $\left(a,b\right)$ be a mutually cubic roots pair and suppose first that $b\neq 1$. Setting $c=\frac{1-b^{3}}{a}\neq 0$ we get that $c\mid1-b^{3}$
so that $b^{3}\equiv_{c}1$. On the other hand we have that
\[
c^{3}\equiv_{b}\left(ca\right)^{3}=\left(1-b^{3}\right)^{3}\equiv_{b}1
\]
 so that $\left(c,b\right)$ is another mutually cubic root pair. Taking the limit
we get that $\frac{\tilde{c}}{\tilde{b}}=\frac{3\tilde{b}-\tilde{a}}{\tilde{b}}=3-\frac{\tilde{a}}{\tilde{b}}$.

If on the other hand $b_t=1$ for almost all $t$, then $\frac{\tilde{a}}{\tilde{b}}=\infty$ (since we assumed that $(\tilde{a},\tilde{b})\neq(0,0)$), and then $3-\infty = \infty \in \Lambda$, hence the claim is still true.
\end{proof}
Both of the maps $s\to s^{-1}$ and $s\to3-s$ have order 2, but their
composition $T(s)=3-\frac{1}{s}$ has infinite order and acts on $P^1(\bR)$. We start with some basic properties of this M\"{o}bius action.
\begin{lemma}
\label{lem:mobius}
Let $T(s)=3-\frac{1}{s}$. Then:
\begin{enumerate}
\item The fixed points of $T$ are $\alpha_\pm=\frac{3\pm\sqrt{5}}{2}$ where $\frac{1}{3}<\alpha_-<\alpha_+<3$. In addition, any other $T$ orbit is inifinite.
\item $T[(\alpha_-,\alpha_+)]=(\alpha_-,\alpha_+)$  and $T(s)>s$ in this segment. 
\item $T[(\alpha_+,\infty]]= (\alpha_+,3]$ and $T(s)<s$ for $s\in (\alpha_+,\infty]$.
\item $T^{-1}[ [0,\alpha_-) ] =[\frac{1}{3},\alpha_-)$ and $T(s)<s$ for $s\in [\frac{1}{3},\alpha_-)$.
\item $T$  satisfies $T(\frac{1}{3})=0,\;T(0)=\infty$ and $T(\infty)=3$.
\item The only accumulation points of a single $T$-orbit in $\Lambda$ are $\alpha_\pm$.
\end{enumerate}
\end{lemma}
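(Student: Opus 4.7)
The plan is to reduce essentially everything to a single algebraic identity. A direct calculation gives
\[
T(s)-s \;=\; 3-\tfrac{1}{s}-s \;=\; -\frac{s^{2}-3s+1}{s} \;=\; -\frac{(s-\alpha_{-})(s-\alpha_{+})}{s},
\]
where $\alpha_{\pm}=\frac{3\pm\sqrt{5}}{2}$ are the roots of $s^{2}-3s+1$ and hence the only finite fixed points of $T$ (one checks $T(0)=\infty$ and $T(\infty)=3\neq\infty$, so there is no fixed point at infinity). Numerically $\alpha_{-}\approx 0.382$ and $\alpha_{+}\approx 2.618$, which immediately gives $\tfrac{1}{3}<\alpha_{-}<\alpha_{+}<3$. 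For the infinitude of all other orbits, I would note that $T$ comes from the matrix $M=\bigl(\begin{smallmatrix}3 & -1\\ 1 & 0\end{smallmatrix}\bigr)$ with $\det M=1$ and $|\on{tr} M|=3>2$, i.e. $M$ is hyperbolic; its eigenvalues are $\alpha_{\pm}$, so $M^{n}$ is conjugate to $\diag{\alpha_{+}^{n},\alpha_{-}^{n}}$, and $T^{n}=\on{id}$ would force $(\alpha_{+}/\alpha_{-})^{n}=1$, which is impossible as $\alpha_{+}\neq\alpha_{-}$.

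Items (2)--(5) then follow from the factored form of $T(s)-s$ together with monotonicity. Since $T'(s)=1/s^{2}>0$, $T$ is increasing on each of its intervals of continuity. On $(\alpha_{-},\alpha_{+})$ the factor $(s-\alpha_{-})(s-\alpha_{+})$ is negative and $s>0$, so $T(s)>s$, and since $T$ fixes the endpoints, this interval is preserved. On $(\alpha_{+},\infty]$ both factors have the same sign, so $T(s)<s$; combined with $T(\alpha_{+})=\alpha_{+}$ and $T(\infty)=3$, the image is $(\alpha_{+},3]$. The same sign analysis on $[\tfrac13,\alpha_{-})$ gives $T(s)<s$, and using $T(\tfrac13)=0$ and $T(\alpha_{-})=\alpha_{-}$ one obtains the image $[0,\alpha_{-})$, equivalently $T^{-1}([0,\alpha_{-}))=[\tfrac13,\alpha_{-})$. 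The computations in (5) are immediate from the definition of $T$.

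For (6), I would diagonalize $T$ via the change of coordinates $\vphi(s)=(s-\alpha_{+})/(s-\alpha_{-})$, which sends $\alpha_{+}\mapsto 0$ and $\alpha_{-}\mapsto\infty$. In these coordinates $T$ becomes multiplication by $\alpha_{-}/\alpha_{+}\in(0,1)$, so for any $s\neq\alpha_{\pm}$ the forward iterates satisfy $\vphi(T^{n}s)\to 0$ and the backward iterates satisfy $\vphi(T^{-n}s)\to\infty$. Translating back, $T^{n}s\to\alpha_{+}$ and $T^{-n}s\to\alpha_{-}$, so the orbit has exactly two accumulation points, $\alpha_{\pm}$. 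The main subtlety, and really the only place one has to be a bit careful, is the topology of $P^{1}(\bR)$: the orbit of a generic $s$ may jump through $0$ and $\infty$ before settling into a monotone tail near $\alpha_{+}$, but the $\vphi$-conjugation sidesteps this and makes the hyperbolic dynamics transparent.
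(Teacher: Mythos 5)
Your proof is correct and complete; the paper itself leaves this lemma as an exercise, so there is no proof to compare against. Your approach — factoring $T(s)-s=-\frac{(s-\alpha_-)(s-\alpha_+)}{s}$ for the sign analysis in items (2)--(4), using the hyperbolicity of the matrix $\smallmat{3&-1\\1&0}$ (trace $3>2$, determinant $1$, distinct eigenvalues $\alpha_\pm$) to rule out finite non-fixed orbits, and conjugating by $\vphi(s)=\frac{s-\alpha_+}{s-\alpha_-}$ to turn $T$ into multiplication by $\alpha_-/\alpha_+\in(0,1)$ for item (6) — is the standard treatment of a hyperbolic fractional linear map and is exactly what the authors had in mind; I verified the multiplier computation using $3-\alpha_\pm=1/\alpha_\pm$, giving $T(s)-\alpha_\pm=\frac{s-\alpha_\pm}{\alpha_\pm s}$ and hence $\vphi(T(s))=\frac{\alpha_-}{\alpha_+}\vphi(s)$ as you claim.
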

\begin{proof}
Left as an exercise.
\end{proof}
\begin{corollary} $\Lambda$ is infinite.
\end{corollary}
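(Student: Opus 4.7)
The plan is to leverage the dynamical structure provided by the two preceding lemmas. By Lemma~\ref{lem:operations_on_lambda}, the set $\Lambda$ is invariant under both involutions $s\mapsto s^{-1}$ and $s\mapsto 3-s$, hence under their composition $T(s)=3-\tfrac{1}{s}$. It therefore suffices to exhibit a single element of $\Lambda$ whose $T$-orbit is infinite, since that orbit is contained in $\Lambda$.

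First I would supply a concrete starting point. Example~\ref{theta_is_a_unit} gives several explicit mutually cubic root sequences; for instance, taking $(a_t,b_t)=(1,t)$ (so that $(a,b)=(1,b)$ is a mutually cubic root pair for every $b$) yields $\tilde a=0$, $\tilde b=1$, and therefore the ratio $s_0=\tilde a/\tilde b=0$ belongs to $\Lambda$. Alternatively, the recipe $(a_t,b_t)=(t^2+t+1,t)$ gives $\tilde a/\tilde b=2$, and $(a_t,b_t)=(1-t,t)$ gives $\tilde a/\tilde b=1$. Any one of these concrete rational values of $s_0\in\Lambda$ will do.

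The final step is to invoke Lemma~\ref{lem:mobius}. Its fixed points of $T$ are the irrational numbers $\alpha_\pm=(3\pm\sqrt{5})/2$, and every other $T$-orbit is infinite. Since $s_0$ is rational, $s_0\notin\{\alpha_-,\alpha_+\}$, so the orbit $\{T^n(s_0):n\in\mathbb{Z}\}\subseteq\Lambda$ is infinite, which proves the corollary. There is no real obstacle here: once Lemmas~\ref{lem:operations_on_lambda} and~\ref{lem:mobius} are in hand, the argument reduces to pointing out a single rational element of $\Lambda$ and applying $T$ iteratively.
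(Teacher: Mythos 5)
Your proof is correct and follows essentially the same route as the paper: start from a concrete rational element of $\Lambda$ (the paper uses $3$, $\tfrac{1}{3}$, $\tfrac{1}{2}$), note by Lemma~\ref{lem:operations_on_lambda} that $\Lambda$ is $T$-invariant, and invoke Lemma~\ref{lem:mobius} to conclude the $T$-orbit is infinite since the only finite orbits are the irrational fixed points $\alpha_\pm=(3\pm\sqrt{5})/2$.
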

\begin{proof}
Since $3,\frac{1}{3},\frac{1}{2}\in\Lambda$, the lemma above provide
2 infinite $T$-orbits in $\Lambda$, and in particular $\Lambda$ is infinite in itself.
\end{proof}

It is now easily seen that $\Lambda$ contains at least two accumulation points at $\frac{3\pm \sqrt{5}}{2}$ which are exactly the fixed points of $T$. Define $\tilde{T}$ to be the corresponding action on sequences of mutually cubic roots, i.e. $\tilde{T}(a_t,b_t)=(\frac{1-a_t^3}{b_t},a_t)$. This is a composition of switching the sequences $(a_t,b_t)\mapsto (b_t,a_t)$ and then using the fact that $b_t^3-1\equiv _{a_t} 0$ we map $(b_t,a_t)\mapsto (\frac{1-a_t^3}{b_t},a_t)$. 

Suppose now that $b_t \mid a_t^2+a_t+1$, e.g. the sequence $(t, 1)$. In this case we will get that $(1-a_t)b_t \mid a_t^3-1$ so we may define the operation $\tilde{D}(a_t,b_t)=(a_t,(1-a_t)b_t)$ and hope to get a new mutually cubic root pair. As the next lemma shows, while we cannot use this operation on any sequence of mutually cubic root pairs, there are enough such sequences.
\begin{lemma}
Define $\tilde{D}:(\bZ -\{1\})\times \mathbb{Z} \to (\bZ -\{1\})\times \mathbb{Z} $ by $D(a_t,b_t)=(a_t,(1-a_t)b_t)$. Then the following holds:
\begin{enumerate}
\item\label{2131} The map $\tilde{D}$ induces a bijection between the set of mutually cubic root pairs $(a,b)$ with $b \mid a^2+a+1$ 
and the set of mutually cubic root pairs $(c,d)$ with $(1-c)\mid d$.
\item\label{2132} Suppose that $(a_t,b_t)$ is a sequence mutually cubic root pairs satisfying $|a_t| \to \infty$ and $b_t \mid a_t^2+a_t+1$ for each $t$ (namely, we can apply $\tilde{D}$ to it). If $s:=\lim_{t\to \infty}\frac{\log|a_t|}{\log|b_t|}>\frac{3+\sqrt{3}}{2}$ (including $\infty$), then $(c_t,d_t)=\tilde{T} \circ \tilde{T} \circ \tilde{D}(a_t,b_t)$ is also a sequence mutually cubic root pairs satisfying $d_t \mid c_t^2+c_t+1$ and $|c_t|\to \infty$. More over, we have
$$R(s):=\lim_{t\to \infty}\frac{\log|c_t|}{\log|d_t|}=\frac{5s-3}{2s-1}$$
and $\frac{3+\sqrt{3}}{2}<R(s)<s$.
\end{enumerate}
\end{lemma}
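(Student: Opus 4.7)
My plan is to unravel what the three operations $\tilde D$, $\tilde T$, $\tilde T$ do algebraically on the pair, and then read off both the integrality claims and the logarithmic growth.

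\textbf{Part (1).} I will first verify that $\tilde D$ lands in the asserted target set. The key identity is
\[
a^{3}-1=-(1-a)(a^{2}+a+1),
\]
from which $(1-a)b\mid a^{3}-1$ is equivalent to $b\mid a^{2}+a+1$. This gives $c^{3}\equiv 1\pmod{d}$ when $(c,d)=(a,(1-a)b)$. The other congruence, $d^{3}\equiv 1\pmod{c}$, follows because $(1-a)\equiv 1\pmod{a}$, so $((1-a)b)^{3}\equiv b^{3}\equiv 1\pmod a$. For the inverse, given an MCRP $(c,d)$ with $(1-c)\mid d$, I set $(a,b)=(c,d/(1-c))$ and run the same identity in reverse: $d\mid c^{3}-1$ together with $d=(1-c)b$ forces $b\mid c^{2}+c+1$, proving both $b\mid a^{2}+a+1$ and $a^{3}\equiv 1\pmod b$; the other congruence is again immediate from $1-c\equiv 1\pmod c$. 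This establishes the bijection.

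\textbf{Part (2), exact computation.} I would compute the composition explicitly. Setting $e_{t}=\frac{a_{t}^{2}+a_{t}+1}{b_{t}}$ (an integer by hypothesis), the identity $1-a^{3}=(1-a)(a^{2}+a+1)$ yields
\[
\tilde T\circ\tilde D(a_{t},b_{t})=\bigl(e_{t},\,a_{t}\bigr),\qquad
(c_{t},d_{t})=\tilde T\circ\tilde T\circ\tilde D(a_{t},b_{t})=\Bigl(\tfrac{1-e_{t}^{3}}{a_{t}},\,e_{t}\Bigr).
\]
That $(e_{t},a_{t})$ and $(c_{t},d_{t})$ are MCRPs follows from Lemma~\ref{lem:operations_on_lambda}, which asserts that $\tilde T$ preserves the MCRP condition.

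\textbf{Part (2), divisibility $d_{t}\mid c_{t}^{2}+c_{t}+1$.} Since $(e_{t},a_{t})$ is an MCRP we have $\gcd(a_{t},e_{t})=1$, so $a_{t}$ is invertible modulo $e_{t}$. From $a_{t}c_{t}=1-e_{t}^{3}\equiv 1\pmod{e_{t}}$ we get $c_{t}\equiv a_{t}^{-1}\pmod{e_{t}}$, whence
\[
a_{t}^{2}\bigl(c_{t}^{2}+c_{t}+1\bigr)\equiv 1+a_{t}+a_{t}^{2}\pmod{e_{t}}.
\]
The right-hand side is $b_{t}e_{t}$, hence divisible by $e_{t}=d_{t}$, and the invertibility of $a_{t}^{2}$ gives $d_{t}\mid c_{t}^{2}+c_{t}+1$.

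\textbf{Part (2), asymptotics.} Since $s>1$ and $|a_{t}|\to\infty$, also $|b_{t}|\to\infty$. Taking logarithms,
\[
\log|e_{t}|=(2-\tfrac1s)\log|a_{t}|+o(\log|a_{t}|),\qquad
\log|c_{t}|=(5-\tfrac3s)\log|a_{t}|+o(\log|a_{t}|).
\]
The factor $5-3/s$ is positive for $s>3/5$, in particular for $s>\tfrac{3+\sqrt3}{2}$, so $|c_{t}|\to\infty$. Dividing, $\lim\log|c_{t}|/\log|d_{t}|=(5s-3)/(2s-1)=R(s)$. Finally, the two inequalities reduce to a single quadratic fact: $R(s)-s=\dfrac{-(2s^{2}-6s+3)}{2s-1}$, whose numerator vanishes exactly at $s=\tfrac{3\pm\sqrt3}{2}$, so $R(s)<s$ for $s>\tfrac{3+\sqrt3}{2}$; and $R'(s)=1/(2s-1)^{2}>0$ combined with the fixed-point identity $R(\tfrac{3+\sqrt3}{2})=\tfrac{3+\sqrt3}{2}$ gives $R(s)>\tfrac{3+\sqrt3}{2}$. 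The main obstacle that I expect is keeping track of signs and divisibility in the identity $1-a^{3}=(1-a)(a^{2}+a+1)$ throughout the composition (so that one obtains the simple closed form for $(c_{t},d_{t})$ without spurious sign or factor errors); once the explicit formula is in hand, the divisibility and asymptotic analysis are routine.
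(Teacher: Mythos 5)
Your proof is correct and follows essentially the same route as the paper: compose $\tilde D$ with the two applications of $\tilde T$ explicitly, verify the required congruences directly (using $\gcd(a_t,e_t)=1$ and $c_t\equiv a_t^{-1}\pmod{e_t}$), and read off the growth rate from the closed form for $(c_t,d_t)$; you also supply the elementary verification of $\frac{3+\sqrt3}{2}<R(s)<s$, which the paper leaves to the reader. The only misstatement is the claim that $s>1$ together with $|a_t|\to\infty$ forces $|b_t|\to\infty$ — this fails when $s=\infty$ (e.g.\ $b_t\equiv 1$) — but it is harmless, since your subsequent expansion normalizes everything by $\log|a_t|$ rather than $\log|b_t|$.
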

\begin{proof}
\eqref{2131}. Suppose first that $(a,b)$ is a mutually cubic roots pair with $b \mid a^2+a+1$. We clearly have that $(c,d):=(a,(1-a)b)$ satisfy $(1-c)\mid d$ and $c^3-1\equiv_{d} 0$. Furthermore, we have that $d^3=(1-a)^3b^3 \equiv_{a} b^3\equiv _{a}1$ so that $(c,d)$ is a mutually cubic root pair. On the other hand, if $(c,d)$ is a mutually cubic root pair with $(1-c)\mid d$, then we similarly get that $(c,\frac{d}{1-c})$ is again a mutually cubic root pair satisfying $b \mid a^2+a+1$ and $\tilde{D}(a,b)=(c,d)$.

\eqref{2132}. Assuming that $(a_t,b_t)$ is a mutually cubic root pair satisfying  $b_t \mid a_t^2+a_t+1$ we get that
\begin{eqnarray*}
(c_t,d_t) & = & \tilde{T} \circ \tilde{T} \circ \tilde{D}(a_t,b_t) = \tilde{T} \circ \tilde{T} (a_t,(1-a_t)b_t)=
\tilde{T}  \left(\frac{1-a_t^3}{(1-a_t)b_t},a_t\right) \\
 & = &\tilde{T}  \left(\frac{a_t^2+a_t+1}{b_t},a_t\right) 
=\left(\left(1-\left[\frac{a_t^2+a_t+1}{b_t}\right]^3\right)/a_t,\frac{a_t^2+a_t+1}{b_t}\right)
\end{eqnarray*}

We assumed that $b_t \mid a_t^2+a_t+1$ and $|a_t|\to \infty$ so that in particular $b_t,a_t,1-a_t\neq 0$ for almost all $t$ and we can divide by them. It follows that $(c_t,d_t)$ are well defined, and from part~\eqref{2131} it is also a  mutually cubic root pair.

Since $s=\lim_{t\to \infty}\frac{\log|a_t|}{\log|b_t|}>\frac{3+\sqrt{3}}{2}>\frac{1}{2}$ and $|a_t|\to \infty$, we conclude that $\left| \frac{a_t ^2+a_t+1}{b_t}\right| \to \infty$ and therefore
$$R(s):=\lim_{t\to \infty} \frac{\log|c_t|}{\log|d_t|}=\frac{5s-3}{2s-1}.$$

It is straight forward to show that if $s>\frac{3+\sqrt{3}}{2}$, then $s>R(s)>\frac{3+\sqrt{3}}{2}$. It follows that since
$|d_t|=\left| \frac{a_t ^2+a_t+1}{b_t} \right| \to \infty$ we must also have that $|c_t| \to \infty$. Finally, we need to show that $c_t^2+c_t+1\equiv_{b_t} 0$. Indeed, since $\gcd(a_t,d_t)=1$ we have $c_t \equiv _{d_t} (1-d_t^3) \cdot \frac{1}{a_t} \equiv _{d_t} \frac{1}{a_t}$ so that
$$c_t^2+c_t+1\equiv_{d_t} \frac{1+a_t+a_t^2}{a_t^2} = \frac{1+a_t+a_t^2}{b_t}\frac{b_t}{a_t^2}\equiv_{d_t}0.$$
\end{proof}

\begin{corollary}\label{cor:accpts}
There are infinitely many $T$-orbits in $\Lambda$ in the open interval $\left(\frac{3-\sqrt{5}}{2},\frac{3+\sqrt{5}}{2}\right)$
and $\Lambda$ has infinitely many accumulation points.\end{corollary}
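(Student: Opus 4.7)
The plan is to iterate the operation $\Psi := \tilde T \circ \tilde T \circ \tilde D$ introduced in the previous lemma on an explicit starting sequence, thereby producing inside $\Lambda$ a strictly decreasing sequence of real numbers $s_n \in (\alpha_-,\alpha_+)$ which accumulate at the attracting fixed point $\frac{3+\sqrt{3}}{2}$ of the map $R(s) = \frac{5s-3}{2s-1}$. Once this sequence is in hand, both assertions of the corollary will follow quickly by combining the $T$-invariance of $\Lambda$ from Lemma~\ref{lem:operations_on_lambda} with the dynamics of $T$ described in Lemma~\ref{lem:mobius}.

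Concretely, I would take $(a_t^{(0)},b_t^{(0)}) := (t,1)$, which trivially meets the hypotheses of part~(2) of the previous lemma: it is a mutually cubic root sequence, $b_t^{(0)}=1$ divides $(a_t^{(0)})^2+a_t^{(0)}+1$, $|a_t^{(0)}| \to \infty$, and the ratio is $s_0 = \infty$. I would then define inductively $(a_t^{(n+1)},b_t^{(n+1)}) := \Psi(a_t^{(n)},b_t^{(n)})$. The conclusion of the previous lemma guarantees that as long as $s_n > \frac{3+\sqrt{3}}{2}$, the output is again a mutually cubic root sequence satisfying the same divisibility condition and with $|a_t^{(n+1)}|\to\infty$, so the inductive step may be applied again; moreover the new limit is $s_{n+1} = R(s_n)$ and satisfies $\frac{3+\sqrt{3}}{2} < s_{n+1} < s_n$. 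Thus by induction all $s_n$ are well-defined and belong to $\Lambda$, the sequence is strictly decreasing, and $s_n \to \frac{3+\sqrt{3}}{2}$. Furthermore $s_1 = R(\infty) = 5/2 < \frac{3+\sqrt{5}}{2} = \alpha_+$, so $s_n \in (\alpha_-,\alpha_+)$ for every $n\geq 1$.

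For the first assertion, since $\Lambda$ is $T$-invariant by Lemma~\ref{lem:operations_on_lambda} and $T$ preserves $(\alpha_-,\alpha_+)$ by Lemma~\ref{lem:mobius}, each orbit $\mathcal{O}_n := T\cdot s_n$ lies entirely in $\Lambda \cap (\alpha_-,\alpha_+)$; if only finitely many of the orbits $\mathcal{O}_n$ were distinct then some single $T$-orbit would contain a subsequence of $\{s_n\}$ converging to $\frac{3+\sqrt{3}}{2}$, contradicting the fact that the only accumulation points of any single $T$-orbit are $\alpha_\pm$. For the second assertion, the point $\frac{3+\sqrt{3}}{2}$ is itself an accumulation point of $\Lambda$ by construction; since $T$ is a homeomorphism of $\mathbb{P}^1(\mathbb{R})$ stabilizing $\Lambda$, the set of accumulation points of $\Lambda$ is $T$-invariant; and since $\frac{3+\sqrt{3}}{2}$ is not a fixed point of $T$ (the fixed points being $\alpha_\pm$), its $T$-orbit is infinite by Lemma~\ref{lem:mobius}, producing infinitely many accumulation points of $\Lambda$. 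The only step requiring any care is the bookkeeping confirming that the degenerate starting ratio $s_0 = \infty$ is correctly handled by the inductive step, but this is precisely why the preceding lemma was stated to allow $s=\infty$.
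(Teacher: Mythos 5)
Your proposal is correct and follows essentially the same route as the paper: both start from the sequence $(t,1)$, iterate $\tilde T\circ\tilde T\circ\tilde D$ to obtain a strictly decreasing sequence of ratios $R^n(\infty)\in\Lambda$ accumulating at $\frac{3+\sqrt{3}}{2}$, and then invoke the $T$-invariance of $\Lambda$ together with the dynamics of $T$ from Lemma~\ref{lem:mobius} to conclude both assertions. Your explicit check that $s_1=R(\infty)=5/2<\alpha_+$ (so that all $s_n$ for $n\geq 1$ lie in $(\alpha_-,\alpha_+)$) is a small piece of bookkeeping that the paper leaves implicit, but the argument is the same.
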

\begin{proof}
Consider the sequence $(a_t,b_t)=(t,1)$ of mutually cubic root pairs. This sequence satisfies the conditions from the previous lemma, i.e. that $1=b_t \mid a_t^2 +a_t+1$ and that $|a_t|=|t|\to \infty$ as $t\to \infty$. Furthermore, we have that $s=\lim \frac{\log |a_t|}{\log|b_t|} = \infty > \frac{3+\sqrt{3}}{2}$ so from the previous lemma, for every $n$ the sequence $[(\tilde{T}^2\circ \tilde{D})^n (t,1)]_{t=1} ^\infty$ is a sequence of mutually cubic root pairs, and it corresponds to the limit $R^n (\infty) \in \Lambda$, where $R(s)$ is defined in the previous lemma. This sequence is decreasing and converges to $\frac{3+\sqrt{3}}{2}$ which is in the segment $\left( \frac{3-\sqrt{5}}{2}, \frac{3+\sqrt{5}}{2} \right)$, hence $\frac{3+\sqrt{3}}{2}$ is an accumulation point. Using the fact that $\Lambda$ is closed under the action of $T$, we see that there are infinitely many accumulation points in $\Lambda$.
Since the only limits of $T$-orbits are $\frac{3\pm\sqrt{5}}{2}$, it must contain infinitely many orbits in order to have infinitely many accumulation points.
\end{proof}
\begin{acknowledgments}
We thank Andre Reznikov for valuable discussions. The authors acknowledge the support ISF grants 357/13, 1017/12.
\end{acknowledgments}

\begin{bibdiv}
\begin{biblist}

\bib{cassels_product_1952}{article}{
      author={Cassels, J. W.~S.},
       title={The product of $n$ inhomogeneous linear forms in $n$ variables},
    language={en},
        date={1952-10},
        ISSN={0024-6107, 1469-7750},
     journal={Journal of the London Mathematical Society},
      volume={s1-27(4)},
      number={4},
       pages={485\ndash 492},
         url={http://jlms.oxfordjournals.org/cgi/doi/10.1112/jlms/s1-27.4.485},
}

\bib{cusick_lower_1984}{incollection}{
      author={Cusick, T.~W.},
       title={Lower bounds for regulators},
    language={en},
        date={1984},
   booktitle={Number theory noordwijkerhout 1983},
      editor={Jager, Hendrik},
      series={Lecture Notes in Mathematics},
   publisher={Springer Berlin Heidelberg},
       pages={63\ndash 73},
         url={http://link.springer.com/chapter/10.1007/BFb0099441},
}

\bib{cusick_regulator_1991}{article}{
      author={Cusick, T.~W.},
       title={The regulator spectrum for totally real cubic fields},
    language={en},
        date={1991-09},
        ISSN={0026-9255, 1436-5081},
     journal={Monatshefte f\"{u}r Mathematik},
      volume={112},
      number={3},
       pages={217\ndash 220},
  url={http://link.springer.com.ezlibrary.technion.ac.il/article/10.1007/BF01297340},
}

\bib{ELMV_periodic_torus}{article}{
      author={Einsiedler, M.},
      author={Lindenstrauss, E.},
      author={Michel, P.},
      author={Venkatesh, A.},
       title={Distribution of periodic torus orbits on homogeneous spaces},
        date={2009},
        ISSN={0012-7094},
     journal={Duke Mathematical Journal},
      volume={148},
      number={1},
       pages={119\ndash 174},
         url={http://dx.doi.org/10.1215/00127094-2009-023},
      review={\MR{2515103 (2010k:37004)}},
}

\bib{grundman_systems_1995}{article}{
      author={Grundman, H.~G.},
       title={Systems of fundamental units in cubic orders},
        date={1995-01},
        ISSN={0022-314X},
     journal={Journal of Number Theory},
      volume={50},
      number={1},
       pages={119\ndash 127},
  url={http://www.sciencedirect.com/science/article/pii/S0022314X85710074},
}

\bib{LW}{article}{
      author={Lindenstrauss, E.},
      author={Weiss, B.},
       title={On sets invariant under the action of the diagonal group},
        date={2001},
        ISSN={0143-3857},
     journal={Ergodic Theory Dynam. Systems},
      volume={21},
      number={5},
       pages={1481\ndash 1500},
         url={http://dx.doi.org/10.1017/S0143385701001717},
      review={\MR{MR1855843 (2002j:22009)}},
}

\bib{McMullenMinkowski}{article}{
      author={McMullen, Curtis~T.},
       title={Minkowski's conjecture, well-rounded lattices and topological
  dimension},
        date={2005},
        ISSN={0894-0347},
     journal={J. Amer. Math. Soc.},
      volume={18},
      number={3},
       pages={711\ndash 734 (electronic)},
         url={http://dx.doi.org/10.1090/S0894-0347-05-00483-2},
      review={\MR{2138142 (2006a:11086)}},
}

\bib{minemura_totally_2004}{article}{
      author={Minemura, Kenji},
       title={On totally real cubic orders whose unit groups are of type
  $\{a\theta+b,c\theta+d\}$},
    language={EN},
        date={2004-12},
        ISSN={0387-3870},
     journal={Tokyo Journal of Mathematics},
      volume={27},
      number={2},
       pages={393\ndash 410},
         url={http://projecteuclid.org/euclid.tjm/1244208398},
      review={\MR{MR2107592}},
}

\bib{minemura_totally_1998}{article}{
      author={Minemura, Kenji},
       title={On totally real cubic fields whose unit groups are of type $\{
  \theta +r, \theta +s \}$},
    language={EN},
        date={1998},
        ISSN={0386-2194},
     journal={Proceedings of the Japan Academy, Series A, Mathematical
  Sciences},
      volume={74},
      number={10},
       pages={156\ndash 159},
         url={http://projecteuclid.org/euclid.pja/1195506661},
      review={\MR{MR1675457}},
}

\bib{algebraic_1999}{book}{
      author={Neukirch, J\"{o}rgen},
       title={Algebraic number theory},
    language={English},
     edition={1999},
   publisher={Springer},
     address={Berlin ; New York},
        date={1999},
        ISBN={978-3-540-65399-8},
}

\bib{Shapira}{unpublished}{
      author={Shapira, Uri},
       title={Full escape of mass for the diagonal group},
        note={To appear in IMRN. Available on arxiv at
  https://arxiv.org/abs/1511.07251},
}

\bib{ShapiraWeiss}{article}{
      author={Shapira, Uri},
      author={Weiss, Barak},
       title={On the mordell-gruber spectrum},
        date={2014},
     journal={International Mathematics Research Notices},
}

\bib{thomas_fundamental_1979}{article}{
      author={Thomas, Emery},
       title={Fundamental units for orders in certain cubic number fields.},
    language={eng},
        date={1979},
        ISSN={0075-4102; 1435-5345},
     journal={Journal f\"{u}r die reine und angewandte Mathematik},
      volume={310},
       pages={33\ndash 55},
         url={https://eudml.org/doc/152150},
}

\end{biblist}
\end{bibdiv}

\end{document}